\newtheorem{theorem}{Theorem}[section]
\newtheorem{lemma}[theorem]{Lemma}
\newtheorem{cor}[theorem]{Corollary}
\newtheorem{definition}[theorem]{Definition}
\newtheorem{example}[theorem]{Example}
\newtheorem{remark}[theorem]{Remark}
\newtheorem{proposition}[theorem]{Proposition}
\def\pagenumber{1}
\begin{document}
\setcounter{page}{\pagenumber}
\newcommand{\T}{\mathbb{T}}
\newcommand{\R}{\mathbb{R}}
\newcommand{\Q}{\mathbb{Q}}
\newcommand{\N}{\mathbb{N}}
\newcommand{\Z}{\mathbb{Z}}
\newcommand{\tx}[1]{\quad\mbox{#1}\quad}
\parindent=0pt
\def\SRA{\hskip 2pt\hbox{$\joinrel\mathrel\circ\joinrel\to$}}
\def\tbox{\hskip 1pt\frame{\vbox{\vbox{\hbox{\boldmath$\scriptstyle\times$}}}}\hskip 2pt}
\def\circvert{\vbox{\hbox to 8.9pt{$\mid$\hskip -3.6pt $\circ$}}}
\def\IM{\hbox{\rm im}\hskip 2pt}
\def\ES{\vbox{\hbox to 8.9pt{$\big/$\hskip -7.6pt $\bigcirc$\hfil}}}
\def\TR{\hbox{\rm tr}\hskip 2pt}
\def\GRAD{\hbox{\rm grad}\hskip 2pt}
\def\bull{\vrule height .9ex width .8ex depth -.1ex}
\def\VLLA{\hbox to 25pt{\leftarrowfill}}
\def\VLRA{\hbox to 25pt{\rightarrowfill}}
\def\sqcup{\mathop{\bigcup}\limits^{.}}
\setbox2=\hbox to 25pt{\rightarrowfill}
\def\DRA{\vcenter{\copy2\nointerlineskip\copy2}}
\def\RANK{\hbox{\rm rank}\hskip 2pt}
\font\rsmall=cmr7 at 7truept \font\bsmall=cmbx7 at 7truept
\newfam\Slfam
\font\tenSl=cmti10 \font\neinSl=cmti9 \font\eightSl=cmti8
\font\sevenSl=cmti7 \textfont\Slfam=\tenSl
\scriptfont\Slfam=\eightSl \scriptscriptfont\Slfam=\sevenSl
\def\Sl{\fam\Slfam\tenSl}

\def\CODIM{\hbox{\rm codim}\hskip 2pt}
\def\CODIMS{\hbox{\rsmall codim}\hskip 2pt}
\def\SRA{\hskip 2pt\hbox{$\joinrel\mathrel\circ\joinrel\to$}}
\def\tbox{\hskip 1pt\frame{\vbox{\vbox{\hbox{\boldmath$\scriptstyle\times$}}}}\hskip 2pt}
\def\circvert{\vbox{\hbox to 8.9pt{$\mid$\hskip -3.6pt $\circ$}}}
\def\IM{\hbox{\rm im}\hskip 2pt}
\def\ES{\vbox{\hbox to 8.9pt{$\big/$\hskip -7.6pt $\bigcirc$\hfil}}}
\def\TR{\hbox{\rm tr}\hskip 2pt}
\def\TRS{\hbox{\rsmall tr}\hskip 2pt}
\def\DIV{\hbox{\rm div}\hskip 2pt}
\def\DIVS{\hbox{\rsmall div}\hskip 2pt}
\def\GRAD{\hbox{\rm grad}\hskip 2pt}
\def\GRADS{\hbox{\rsmall grad}\hskip 2pt}
\def\bull{\vrule height .9ex width .8ex depth -.1ex}
\def\VLLA{\hbox to 25pt{\leftarrowfill}}
\def\VLRA{\hbox to 25pt{\rightarrowfill}}
\setbox2=\hbox to 25pt{\rightarrowfill}
\def\DRA{\vcenter{\copy2\nointerlineskip\copy2}}
\def\RANK{\hbox{\rm rank}\hskip 2pt}
\def\ROT{\hbox{\rm rot}\hskip 2pt}
\def\ROTS{\hbox{\rsmall rot}\hskip 2pt}

\vskip -1cm

\title[Quantum Extended Crystal PDE's]{\mbox{}\\[1cm] QUANTUM EXTENDED CRYSTAL PDE's}
\author{Agostino Pr\'astaro}
\maketitle
\vspace{-.5cm}

{\footnotesize
\begin{center}
Department SBAI - Mathematics, University of Rome ''La Sapienza'', Via A.Scarpa 16,
00161 Rome, Italy. \\
E-mail: {\tt agostino.prastaro@uniroma1.it; prastaro@dmmm.uniroma1.it}
\end{center}
\vspace{.5cm}

\vspace{.5cm} {\bsmall ABSTRACT.} Our recent results on {\em
extended crystal PDE's} are generalized to PDE's in the category
$\mathfrak{Q}_S$ of quantum supermanifolds. Then obstructions to the
existence of global quantum smooth solutions for such equations are
obtained, by using algebraic topologic techniques. Applications are
considered in details to the quantum super Yang-Mills equations.
Furthermore, our geometric theory of stability of PDE's and their
solutions, is also generalized to quantum extended crystal PDE's. In
this way we are able to identify quantum equations where their
global solutions are stable at finite times. These results, are also
extended to quantum singular (super)PDE's, introducing ({\em quantum
extended crystal singular (super) PDE's}). \footnote{See also companion paper \cite{PRA33}.\\ Work partially
supported by Italian grants MIUR ''PDE's Geometry and
Applications''.}}

{\bsmall AMS (MOS) MS CLASSIFICATION. 57R90, 53C99, 81Q99.}

{\rsmall KEY WORDS AND PHRASES. Integral bordisms in quantum PDE's.
Quantum (super)gravity. Stability. Extended crystal structures.}

\section{\bf Introduction}
In a previous paper \cite{PRA25} we proved that PDE's can be
considered as extended crystals, in the sense that their integral
bordism groups can be seen as crystallographic subgroups extensions.
In this paper we aim generalize that result to quantum super PDE's.
This is possible, since we utilize our geometric theory of PDE's
considered in the category $\mathfrak{Q}$ of quantum manifolds and
in the category $\mathfrak{Q}_S$ of quantum supermanifolds
\cite{PRA15, PRA16, PRA20, PRA21, PRA22, PRA23, PRA30, PRA31,
PRA32}. Then we relate integral bordism groups of quantum super
PDE's to crystallographic groups. The main results are Theorem
\ref{crystal-structure-quantum-super-pdes} and Theorem
\ref{obstruction-smooth-solutions}. The first relates formal
integrability and complete integrability of quantum PDE's to
crystallographic groups. In this way we can consider quantum super
PDE's as {\em quantum extended crystallographic structures}. In the
second theorem, we identify an obstruction characterizing existence
of global quantum smooth solutions. This is called {\em quantum
crystal obstruction} of a quantum super PDE. Applications to quantum
super Yang-Mills PDE's are given too.

Another characterizing aspect of this paper is a new geometric
theory for stability of quantum super PDE's and their solutions.
This is made by extending to the category of quantum super manifolds
$\mathfrak{Q}_S$ our previous geometric approach on commutative
PDE's stability \cite{PRA24, PRA25, PRA26, PRA27, PRA28, PRA29}.
Here a $k$-order quantum (super) PDE is considered as a subset $\hat
E_k\subset J\hat D^k(W)$ of the $k$-jet-derivative space $ J\hat
D^k(W)$, built on some fiber bundle $\pi:W\to M$, in the category of
quantum smooth (super)manifolds. Then, to investigate the stability
of a regular solution $D^ks(M)\equiv V\subset \hat E_k$, of $\hat
E_k\subset J\hat D^k(W)$, one considers the linearization of $\hat
E_k$ at the solution $V$. The integrable solutions of the linearized
equation $(D^ks)^*vT\hat E_k\equiv \hat E_k[s]\subset J\hat
D^k(s^*vTW)\equiv J\hat D^k(\hat E[s])$, represent the infinitesimal
admissible perturbations of the original solution. Then, if to an
initial Cauchy data for such linearized equation there correspond
solutions (perturbations) that or oscillate around the zero solution
(of the linearized equation), or remain limited around such zero
solution, then the solution $s$ is said to be stable, otherwise $s$
is called unstable. Taking into account that the linearized equation
$\hat E_k[s]$ belongs to a vector neighborhood of $\hat E_k$, at
the solution $V$, and that integrable solutions of $\hat E_k[s]$ are
infinitesimal vertical symmetries of $\hat E_k$, it follows that
such perturbations deform the original solution $V\subset \hat E_k$
into solutions $\widetilde{V}\subset \hat E_k$ such that, if $V$ is
stable, remain into suitable neighborhoods of the same $V$. When,
instead the perturbations blow-up, then $V$ is unstable. The
blowing-up of the perturbation corresponds to the fact that such a
solution of the linearized equation $\hat E_k[s]$ is not regular in
all of its points, but there are present singular points. Then in
the cases where $V$ is unstable, between the solutions of above type
$\widetilde{V}$, there are ones that are also singular and this fact
just characterizes unstable solutions of $\hat E_k$. This approach
to the stability can be related to the Ljapunov concept of stability
in functional analysis \cite{LJA}, and it is founded on the
assumption that the possible perturbations can influence only the
given solution, say $V\subset \hat E_k$, but do not have any
influence on the same equation $\hat E_k$.

On the other hand, we can more generally assume that perturbations
can change the same original equation. In such a case we can ask
wether a given solution of the original equation can change for
''little'' perturbations of the same equation. Then we talk about
(un)stable equations. This last approach is, instead, related to the
concept of Ulam (un)stability for functional equations \cite{ULA}.

We prove that all above points of view for stability in quantum
(super) PDE's can be unified in the geometric theory of quantum
(super) PDE's on the ground of integral bordism groups. This extends
to the category of quantum super PDE's, our previous results on the
stability of commutative PDE's \cite{PRA24, PRA25, PRA26, PRA27,
PRA28, PRA29}.\footnote{For basic informations on the geometry of
PDE's see also the following refs.\cite{B-C-G-G-G, GOL, GOL-SPE,
GRO, L-P, PRA2, PRA3, PRA4, PRA5, PRA6, PRA7, PRA8, PRA9, PRA10,
PRA11, PRA12, PRA13, PRA14, PRA15, PRA16, PRA17, PRA18, PRA19}. For
basic informations on some subjects of differential topology and
algebraic topology, related to this paper, see also
refs.\cite{C-R-R, HIR, M-M, M-S, RUDY, STO, SULL, SWI, WAL1, WAL2}.}

 In this paper, the main results on the quantum super PDE's stability are the following. Theorem
 \ref{criteria-fun-stab} that gives some criteria to recognize
 functional stability in any quantum (super) PDE's. Theorem
 \ref{Criterion-fun-stable-sol-conn} that relates functional
 stability with quantum $(k+1)$-connections.
 Theorem \ref{finite-stable-extended-crystal-PDE} proving that to a formally integrable and completely integrable
 quantun (super) PDE, one can canonically associate another quantum (super) PDE ${}^{(S)}\hat E_k$,
 {\em stable quantum extended crystal (super) PDE} of $\hat E_k$, having the same regular smooth solutions
 of $\hat E_k$, but in ${}^{(S)}\hat E_k$ these solution
 are stable.\footnote{This theorem allows to avoid all the problems present in the applications, related to
 finite instability of solutions.} Theorem
 \ref{criterion-average-asymptotic-stability} that gives a criterion
 to recognize the average asymptotic stability with respect to
 quantum frames. Applications to quantum super d'Alembert equation
 and quantum super Navier-Stokes equation are considered too.

Finally we extend above results also to quantum singular (super)
PDE's, and we characterize {\em quantum extended crystal singular
(super) PDE's}. For such equations we identify algebraic-topological
obstructions to the existence of global (smooth) solutions solving
boundary value problems and crossing singular points too.

The paper, after the Introduction, contains two more sections. In
the first section we relate the integral bordism groups of quantum
super PDE's to crystallographic groups and recognize a topologic
algebraic obstruction to the existence of quantum smooth solutions.
Applications of these results to the quantum super Yang-Mills
equation are considered. In Section 3 we formulate a geometric
theory of stability for solutions of quantum super PDE's. We follow
some our previous works devoted to the algebraic topological
characterization of PDE's stability and their solutions satibility
\cite{PRA24, PRA25, PRA26, PRA27, PRA28, PRA29}. Thus, in this paper
the stability of quantum (super) PDE's is studied in the framework
of the geometric theory of quantum (super) PDE's, and in the
framework of the bordism groups of quantum (super) PDE's. In
particular we identify criteria to recognize quantum (super) PDE's
that are stable (in extended Ulam sense) and in their regular smooth
solutions do not occur unstabilities in finite times. We call such
equations {\em stable quantum extended crystal (super) PDE's}.
Applications to the quantum super d'Alembert equation and the
quantum super Navier-Stokes equation respectively are explicitly
considered. Section 4 is devoted to extend above results also to
quantum singular super PDE's. The main results in this section is
Theorem \ref{main-quantum-singular1} that identifies conditions in
order to recognize global (smooth) solutions of quantum singular
super PDE's crossing singular points. There we characterize {\em
quantum $0$-crystal singular super PDE's}, i.e., quantum singular
super PDE's having smooth global solutions crossing singular points,
stable at finite times.

\section{\bf INTEGRAL BORDISM GROUPS OF QUANTUM SUPER PDE's vs CRYSTALLOGRAPHIC GROUPS}
\vskip 0.5cm

In this section we extend to quantum super PDE's our previous
results on the algebraic topological crystal characterization of
commutative PDE's.\footnote{Quantum super PDE's are PDE's in the
category $\mathfrak{Q}_S$ of quantum supermanifolds, in the sense
introduced by A.Pr\'astaro \cite{PRA7, PRA8, PRA11, PRA12, PRA13,
PRA14, PRA18, PRA19, PRA20, PRA21, PRA28, PRA32}.}

\begin{remark}
Here and in the following we shall denote the boundary $\partial V$
of a compact quantum supermanifold $V$, of dimension $m|n$, with
respect to a quantum superalgebra $A$, split in the form $\partial
V=N_0\bigcup P\bigcup N_1$, where $N_0$ and $N_1$ are two disjoint
$(m-1|n-1)$-dimensional quantum sub-supermanifolds of $V$, that are
not necessarily closed, and $P$ is another $(m-1|n-1)$-dimensional
quantum sub-supermanifold of $V$. For example, if $V=\hat
D{}^{m|n}\times \hat D{}^{1|1}$, where $\hat D{}^{r|s}\subset \hat
S{}^{r|s}$ is the $(r|s)$-dimensional quantum superdisk, contained
in the $(r|s)$-dimensional quantum supersphere, one has that $\dim
V=(m+1|n+1)$ and  $N_0=\hat D{}^{m|n}\times\{0\}$, $N_1=\hat
D{}^{m|n}\times\{1\}$, $P=\partial \hat D{}^{m|n}\times \hat
D{}^{1|1}\cong \hat S{}^{m-1|n-1}\times \hat
D{}^{1|1}$.\footnote{Recall that a {\em$(m|n)$-dimensional quantum
supersphere}, $\hat S^{m|n}$, over a quantum superalgebra $A$, is
the Alexandrov compactification of $A^{m|n}$, i.e., $\hat
S^{m|n}=A^{m|n}\bigcup\{\infty\}$. A {\em$(m|n)$-dimensional quantum
superdisk} over a quantum superalgebra $A$, is a connected compact
sub-supermanifold  $\hat D^{m|n}\subset \hat S^{m|n}$, of dimension
$m|n$ over $A$, such that $\partial\hat D^{m|n}\cong \hat
S^{m-1|n-1}$, i.e., with boundary $\partial\hat D^{m|n}$
diffeomorphic to $\hat S^{m-1|n-1}$. For details on such quantum
supermanifolds see \cite{PRA31}.} Therefore $\dim N_0=\dim N_1=\dim
P=(m|n)$. Note that since $\hat D{}^{m|n}\times \hat D{}^{1|1}=\hat
D{}^{m+1|n+1}$, therefore we can also write $\partial V=\partial\hat
D{}^{m+1|n+1}=\hat S{}^{m|n}=\hat S{}^{m-1|n-1}\times\hat
S{}^{1|1}$. Since

\begin{equation}
\begin{array}{ll}
\partial V&=\partial(\hat D{}^{m|n}\times \hat D{}^{1|1})
      =(\partial \hat D{}^{m|n})\times \hat D{}^{1|1}\bigcup \hat
      D{}^{m|n}\times\partial\hat D{}^{1|1}\\
      &=\hat S{}^{m-1|n-1}\times \hat D{}^{1|1}\bigcup \hat
      D{}^{m|n}\times\hat S{}^{0|0}.\\
      \end{array}
    \end{equation}
Therefore, $\partial V$ is obtained by means of the quantum
surgering removing $\hat S{}^{m-1|n-1}\times \hat
D{}^{1|1}\subset\hat S{}^{m|n}$. (For details on quantum surgering
see \cite{PRA27}.) Of course if $V=\hat S{}^{m|n}\times \hat
D{}^{1|1}$, then $P=\varnothing$, hence $\partial V=\hat
S{}^{m|n}\times\{0\}\sqcup S{}^{m|n}\times\{1\}$.\footnote{We denote
disjoint union by the symbol $\sqcup$ or $\sqcup$.}

This example shows that if $V$ is a solution of a quantum super PDE,
then it can be obtained by propagating an initial Cauchy
hypersurface $X\subset V$, $\dim X=(m|n)$, by means of an integrable
full quantum vector field $\zeta:V\to \widehat{TV}\equiv
Hom_Z(A;TV)$, $\partial\phi=\zeta$, where $\phi:A\times V\to V$.
(See Fig.1.)
\begin{figure}[h]
\centerline{\includegraphics[width=5cm]{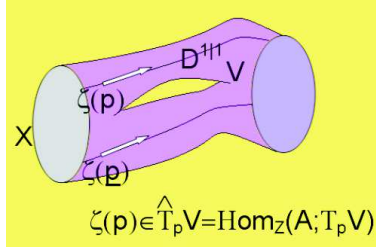}} \caption{Quantum
solution $V$, of dimension $(m+1|n+1)$ over a quantum superalgebra
$A$, propagating $X$, $\dim
X=(m|n)$.\label{quantum-supermanifold-solution}}
\end{figure}

Let us emphasize also, that in some cases solutions can be obtained
also by flows of integrable vector fields $\zeta:V\to TV$, i.e.,
$\zeta=\partial\psi$, with $\psi:\mathbb{R}\times V\to V$. For
example if $V=\hat D{}^{m|n}\times I$, with
$I\equiv[0,1]\subset\mathbb{R}$, then $\dim V=m+1|n$, and $\partial
V=M_0\bigcup P\bigcup M_1$, with $P=\hat S{}^{m-1|n-1}\times I$,
$M_0=\hat D{}^{m|n}\times\{0\}$, $M_1=\hat D{}^{m|n}\times\{1\}$. So
we get $\dim P=m|n-1$, $\dim M_0=\dim M_1=m|n$. Therefore $\partial
V$ has some components ($M_0$ and $M_1$) that have only the even
dimension dropped by $1$, with respect to $V$, and other one ($P$)
where also the odd dimension drops by $1$.

Let us also recall that with the term {\em quantum solutions} we
mean integral bordisms relating Cauchy quantum hypersurfaces of
$\hat E_{k+s}$, contained in $J^{k+s}_{m|n}(W)$, but not necessarily
contained into $\hat E_{k+s}$. (For details see refs.\cite{PRA21,
PRA22, PRA23, PRA31}.)
\end{remark}

\begin{definition}
We say that a quantum super PDE $\hat E_k\subset \hat J^k_{m|n}(W)$
is an {\em quantum extended 0-crystal super PDE}, if its weak
integral bordism group $\Omega^{\hat E_k}_{m-1|n-1,w}$ is zero.
\end{definition}

\begin{theorem}{\em(Criterion to recognize quantum extended 0-crystal super PDE's)}.\label{main4}
Let $\hat E_k\subset \hat J^k_{m|n}(W)$ be a formally quantum
integrable and completely quantum superintegrable quantum super PDE
such that $W$ is contractible. If $m-1\not=0$ and $n-1\not=0$, then
$E_k$ is a quantum extended $0$-crystal super PDE.
\end{theorem}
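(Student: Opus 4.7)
The plan is to reduce the computation of the weak integral bordism group $\Omega^{\hat E_k}_{m-1|n-1,w}$ to a quantum (super) bordism group of $W$, and then exploit the contractibility hypothesis to reduce further to the quantum bordism of a point, which vanishes in positive even/odd dimension.

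First I would invoke the quantum analogue of the surjection/isomorphism theorem (established in Pr\'astaro's previous works, e.g.\ \cite{PRA21,PRA22,PRA31} and re-used in the commutative crystal paper \cite{PRA25}) which, under the assumptions of formal quantum integrability and complete quantum superintegrability, identifies the weak integral bordism group $\Omega^{\hat E_k}_{m-1|n-1,w}$ with the quantum (singular) bordism group $\Omega_{m-1|n-1}^{\mathfrak{Q}_S}(W)$ of the fiber bundle $W$. The point of switching to the weak version is precisely that admissible integral Cauchy hypersurfaces are allowed to connect by quantum supermanifolds (of the appropriate dimension) that need only sit inside the prolongation $\hat J^{k+s}_{m|n}(W)$, and once this freedom is available the bordism becomes insensitive to the equation and depends only on the target $W$.

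Next I would use that $W$ is contractible: any continuous (hence quantum-smooth up to standard approximation arguments in $\mathfrak{Q}_S$) map to $W$ is homotopic to a constant, and by the usual homotopy invariance of bordism this gives $\Omega_{m-1|n-1}^{\mathfrak{Q}_S}(W)\cong \Omega_{m-1|n-1}^{\mathfrak{Q}_S}(\mathrm{pt})$. The remaining task is therefore to show that the quantum super bordism group of a point in dimension $(m-1|n-1)$ with $m-1\ne 0$ and $n-1\ne 0$ is trivial. For this, given any closed $(m-1|n-1)$-dimensional quantum supermanifold $X$, one exhibits an explicit null-bordism: embed $X$ into a quantum superdisk $\hat D^{m|n}$ of one higher even and odd dimension (which is possible since both $m-1$ and $n-1$ are strictly positive, so the cone construction on $X$ lives inside a quantum supermanifold of dimension $(m|n)$), and observe $\partial \hat D^{m|n}\cong \hat S^{m-1|n-1}$ bounds, together with the standard cylinder trick used in the Introduction's remark on $\hat D^{m|n}\times \hat D^{1|1}$. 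This exactly shows $[X]=0$ in $\Omega_{m-1|n-1}^{\mathfrak{Q}_S}(\mathrm{pt})$.

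The main obstacle I expect is the first step: checking that formal quantum integrability plus complete quantum superintegrability really do suffice to replace the (a priori equation-dependent) weak integral bordism group by the ambient quantum bordism group of $W$. In the commutative setting this rests on a prolongation argument (any Cauchy datum of dimension $(m-1|n-1)$ is cobordant, inside some $\hat J^{k+s}_{m|n}(W)$, to one obtained by restriction of a formal solution), and one has to verify that the quantum super versions of these prolongation and regularization lemmas from \cite{PRA21,PRA22,PRA31} apply verbatim. The dimension hypotheses $m-1\ne 0$ and $n-1\ne 0$ are needed precisely to rule out the degenerate cases where either the even or the odd direction collapses and the cone/cylinder construction that kills the bordism class is unavailable.
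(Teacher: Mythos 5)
Your first step is in the spirit of what the paper actually does: under formal quantum integrability and complete quantum superintegrability, the weak integral bordism group is indeed equation-independent, and the paper simply cites from \cite{PRA22} the isomorphism
$\Omega^{\hat E_k}_{m-1|n-1,w}\cong H_{m-1|n-1}(W;A)\cong\bigl(A_0\bigotimes_{\mathbb{K}}H_{m-1}(W;\mathbb{K})\bigr)\bigoplus\bigl(A_1\bigotimes_{\mathbb{K}}H_{n-1}(W;\mathbb{K})\bigr)$.
But note what this says: the group is computed by \emph{ordinary homology} of $W$ with coefficients built from $A$, not by a bordism theory of $W$ with a nontrivial coefficient ring. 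That distinction is exactly where your argument breaks down in its second half.

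The genuine gap is your claimed vanishing of the ``quantum super bordism group of a point'' via a cone/superdisk construction. Homotopy invariance of a bordism-type theory only reduces $\Omega^{\mathfrak{Q}_S}_{m-1|n-1}(W)$ to the coefficient group $\Omega^{\mathfrak{Q}_S}_{m-1|n-1}(\mathrm{pt})$, and coefficient groups of bordism theories do not vanish in positive dimension in general (in the commutative analogue $\Omega_2=\mathbb{Z}_2$, generated by $\mathbb{RP}^2$, which bounds nothing); moreover the cone over a closed manifold is not a manifold unless the manifold is a sphere, and merely embedding $X$ into $\hat D^{m|n}$ produces no null-bordism of $X$. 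If the weak group really were a point-bordism group with its intrinsic cobordism content, the theorem as stated would be false. The correct mechanism, and the one the paper uses, is homological: for contractible $W$ one has $H_{m-1}(W;\mathbb{K})=H_{n-1}(W;\mathbb{K})=0$ precisely because $m-1\not=0$ and $n-1\not=0$ (only $H_0$ of a contractible space survives, equal to $\mathbb{K}$), hence $\Omega^{\hat E_k}_{m-1|n-1,w}=0$. So the dimension hypotheses play a purely homological role (positive degree, killing the $A_0$ and $A_1$ summands), not the geometric ``cone availability'' role you assign them; with your reading, the case $m-1=0$ or $n-1=0$ would merely be inconvenient, whereas in fact it is exactly the case where the group is nonzero (isomorphic to $A_0$ or $A_1$) and the conclusion fails.
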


\begin{proof}
In fact, one has the following ismorphisms, (see \cite{PRA22}):
\begin{equation}
    \begin{array}{ll}
     \Omega^{\hat E_k}_{m-1|n-1,w}& \cong H_{m-1|n-1}(W;A) \\
      & \cong \left(A_0\bigotimes_{\mathbb{K}}H_{m-1}(W;\mathbb{K})\right)\bigoplus
\left(A_1\bigotimes_{\mathbb{K}}H_{n-1}(W;\mathbb{K})\right).
    \end{array}
\end{equation}

Thus, when $W$ is contractible, and $m-1\not=0$, $n-1\not=0$, one
has $H_{m-1}(W;\mathbb{K})=H_{n-1}(W;\mathbb{K})=0$, hence we get
$\Omega^{\hat E_k}_{m-1|n-1,w}=0$.
\end{proof}

\begin{theorem}{\em(Crystal structure of quantum super PDE's).}\label{crystal-structure-quantum-super-pdes}
Let $\hat E_k\subset \hat J^k_{m|n}(W)$ be a formally quantum
superintegrable and completely quantum superintegrable quantum super
PDE. Then its integral bordism group $\Omega_{m-1|n-1}^{\hat E_k}$
is an extension of some crystallographic subgroup $G\triangleleft
G(d)$. We call $d$ the {\em crystal dimension} of $\hat E_k$ and
$G(d)$ its {\em crystal structure} or {\em crystal group}.
\end{theorem}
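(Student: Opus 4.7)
The plan is to adapt the extended-crystal argument of \cite{PRA25} from the commutative category to $\mathfrak{Q}_S$, taking as starting point the isomorphism exploited in the preceding proof. Since $\hat E_k$ is formally and completely quantum superintegrable, the results of \cite{PRA22, PRA31} identify the integral bordism group with a bigraded homology module:
\begin{equation*}
\Omega^{\hat E_k}_{m-1|n-1}\cong H_{m-1|n-1}(W;A)\cong \bigl(A_0\otimes_{\mathbb{K}}H_{m-1}(W;\mathbb{K})\bigr)\oplus \bigl(A_1\otimes_{\mathbb{K}}H_{n-1}(W;\mathbb{K})\bigr).
\end{equation*}
Under the standard finiteness hypothesis on $W$ (having the homotopy type of a finite CW quantum supercomplex), this is a finitely generated abelian group.

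First I would apply the structure theorem for finitely generated abelian groups to decompose $\Omega^{\hat E_k}_{m-1|n-1}\cong \mathbb{Z}^d\oplus T$, where $T$ is the torsion part. The integer $d$ is the candidate for the \emph{crystal dimension} of $\hat E_k$, and, following the convention of \cite{PRA25}, one takes the rank of the underlying abelian group (so that the $A_0/A_1$-graded tensor structure only enters through the resulting rank count).

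Next I would invoke Bieberbach's theorems: every free abelian group $\mathbb{Z}^d$ arises as the translation lattice of some $d$-dimensional crystallographic group $G(d)$, and $\mathbb{Z}^d$ is normal in $G(d)$ with finite point-group quotient. Choose such a $G(d)$ and set $G=\mathbb{Z}^d\triangleleft G(d)$; this is the advertised crystallographic subgroup. The short exact sequence
\begin{equation*}
0\longrightarrow T\longrightarrow \Omega^{\hat E_k}_{m-1|n-1}\longrightarrow G\longrightarrow 0
\end{equation*}
derived from the splitting then exhibits the integral bordism group as a (central) extension of the crystallographic subgroup $G\triangleleft G(d)$, which is the required conclusion.

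The step I expect to be most delicate is the unambiguous definition of the rank $d$: one must check that it is well-posed irrespective of whether the right-hand side of the homological expression is regarded as a $\mathbb{K}$-module, as an $A$-module, or merely as an abelian group, and one must verify that the formally/completely super-integrability hypothesis is exactly what is needed to make the closed, singular and weak bordism groups all coincide with that expression (so that the crystal structure does not depend on the bordism flavour). Once this bookkeeping is settled, the Bieberbach step and the extraction of the extension sequence are essentially formal.
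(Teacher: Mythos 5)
There is a genuine gap, and it occurs at your very first step. The isomorphism $\Omega^{\hat E_k}_{m-1|n-1}\cong H_{m-1|n-1}(W;A)\cong\bigl(A_0\otimes_{\mathbb{K}}H_{m-1}(W;\mathbb{K})\bigr)\oplus\bigl(A_1\otimes_{\mathbb{K}}H_{n-1}(W;\mathbb{K})\bigr)$ is valid for the \emph{weak} integral bordism group $\Omega^{\hat E_k}_{m-1|n-1,w}$ (that is exactly what the proof of Theorem \ref{main4} uses), not for the group $\Omega^{\hat E_k}_{m-1|n-1}$ that the theorem is about: by the diagram (\ref{relations-between-integral-bordism-groups-commutative-diagram}) the two differ by the kernel $K^{\hat E_k}_{m-1|n-1,w}$, which is nonzero in the cases of interest (for the quantum Yang--Mills equation one has $\Omega^{\widehat{(YM)}}_{3|3,w}=0$ while $\Omega^{\widehat{(YM)}}_{3|3}=K_{3|3}\widehat{(YM)}$). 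Moreover, even for the weak group, $A_0\otimes_{\mathbb{K}}H_{m-1}(W;\mathbb{K})\oplus A_1\otimes_{\mathbb{K}}H_{n-1}(W;\mathbb{K})$ is a module over a quantum superalgebra and in general is \emph{not} a finitely generated abelian group (for $\mathbb{K}=\mathbb{R}$ it is a real vector space whenever it is nonzero), so the structure theorem, the splitting $\mathbb{Z}^d\oplus T$, and your rank-based definition of $d$ are unavailable. That convention also contradicts the paper's own examples: the groups that actually control the crystal structure are unoriented cobordism groups, which are pure $2$-torsion (e.g.\ $\Omega_6\cong\mathbb{Z}_2\oplus\mathbb{Z}_2\oplus\mathbb{Z}_2$), and yet the crystal dimension of $\widehat{(YM)}$ is $3$ and that of the quantum d'Alembert equation is $2$; your recipe would return $d=0$ there.

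The paper's argument runs along a different line which your proposal does not reproduce: it does not compute $\Omega^{\hat E_k}_{m-1|n-1}$ homologically, but passes to classic limits. The exact commutative diagram (\ref{Reinhart-bordism-groups-relation}), relating $\Omega^{\hat E_k}_{m-1|n-1}$, the classic-limit bordism group, the Reinhart bordism group $\Omega^\uparrow_{m+n-2}$ and Thom's unoriented cobordism group $\Omega_{m+n-2}$, shows that $\Omega^{\hat E_k}_{m-1|n-1}$ is an extension of a subgroup of $\Omega_{m+n-2}$. The crystallographic content then comes from Lemma \ref{bordism-groups-crystallography}, imported from \cite{PRA25}, asserting that the finite $2$-torsion groups $\Omega_p$ are themselves extensions of crystallographic subgroups $G\triangleleft G(d)$ (realized, for instance, through monomorphisms into amalgamated products such as $D_2\times\mathbb{Z}_2\star_{D_2}D_4$), combined with the elementary lemma that a subgroup of an extension is an extension of a subgroup, and $d$ is the dimension of the crystallographic group so obtained. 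Your Bieberbach/translation-lattice step replaces precisely this substantive input with an argument that cannot apply to torsion groups, so the proposal as written does not prove the theorem.
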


\begin{proof}
Let us first note that there is a relation between lower dimensions
integral bordisms in a commutative PDE. In fact, one has the
following lemma.
\begin{lemma}{\em(Relations between lower dimensions integral bordisms in commutative PDE's).}\label{commutative-PDE-lower-order-relation-bordism}
Let $E_k\subset J^k_n(W)$ be a PDE on the fiber bundle $\pi:W\to M$,
$\dim W=m+n$, $\dim M=n$. Let ${}^{S}C_p(E_k)$ be the set of all
compact $p$-dimensional admissible integral smooth manifolds of
$E_k$. The disjoint union gives an addition on ${}^{S}C_p(E_k)$ with
$\varnothing$ as the zero element. Let us consider the homomorphisms
$\partial_p:{}^{S}C_p(E_k)\to {}^{S}C_{p-1}(E_k)$ that associates to
any element $a\in{}^{S}C_p(E_k)$ its boundary $\partial
a=\partial_p(a)$. So we obtain the following chain complex of
abelian groups {\em(integral smooth bordisms chain complex)}:
\begin{equation}\label{integral-smooth-bordism-chain-complex}
\xymatrix{{}^{S}C_{n}(E_k)\ar[r]^{\partial_n}&{}^{S}C_{n-1}(E_k)\ar[r]^{\partial_{n-1}}&{}^{S}C_{n-2}(E_k)\ar[r]^(.6){\partial_{n-2}}&
\cdots\ar[r]^(.4){\partial_1}&{}^{S}C_{0}(E_k).\\}
\end{equation}
Then the $p$-bordism groups $\Omega_p^{E_k}$, $0<p<n$, can be
represented by means of the homology of the chain complex
{\em(\ref{integral-smooth-bordism-chain-complex})}.
\end{lemma}
\begin{proof}
Let us denote by
$\left\{{}^{S}C_{\bullet}(E_k),\partial_\bullet\right\}$ the chain
complex in (\ref{integral-smooth-bordism-chain-complex}). Then, we
can build the following exact commutative diagram:
\begin{equation}\label{commutative-diagram-integral-smooth-bordism-chain-complex}
\xymatrix{&&0\ar[d]&0\ar[d]&&\\
&0\ar[r]&{}^{S}B_{\bullet}(E_k)\ar[d]\ar[r]&{}^{S}Z_{\bullet}(E_k)\ar[d]\ar[r]&{}^{S}H_{\bullet}(E_k)\ar[r]&0\\
&&{}^{S}C_{\bullet}(E_k)\ar[d]\ar@{=}[r]&{}^{S}C_{\bullet}(E_k)\ar[d]&&\\
0\ar[r]&\Omega_\bullet^{E_k}\ar[r]&{}^{S}Bor_{\bullet}(E_k)\ar[d]\ar[r]&{}^{S}Cyc_{\bullet}(E_k)\ar[d]\ar[r]&0&\\
&&0&0&&\\}
\end{equation}
where

$$\left\{\begin{array}{l}
{}^{S}B_{\bullet} (E_k)=\ker(\partial|_{\bullet});\hskip
2pt {}^{S}Z_{\bullet} (E_k)=\IM(\partial_{\bullet});\\
{}^{S}H_{\bullet} (E_k)={}^{S}Z_{\bullet} (E_k)/{}^{S}B_{\bullet} (E_k),\\
b\in[a]\in {}^{S}Bor_{\bullet}(E_k)\Rightarrow a-b=\partial c;\hskip
2pt
                 c\in {}^{S}C_{\bullet}(E_k);\hskip 2pt
b\in[a]\in {}^{S}Cyc_{\bullet}(E_k)\Rightarrow \partial(a-b)=0;\\
b\in[a]\in \Omega_{\bullet}^{E_k}\Rightarrow
                 \left\{\begin{array}{l}
                          \partial a=\partial b=0\\
                          a-b=\partial c,\quad c\in {}^{S}C_{\bullet}(E_k)\\
                          \end{array}
                                \right\}.\\
\end{array}\right.$$
Then from
(\ref{commutative-diagram-integral-smooth-bordism-chain-complex}) it
follows directly that $\Omega_{p}^{E_k}\cong {}^{S}H_p(E_k)$,
$0<p<n$.
\end{proof}

\begin{lemma}{\em(Relations between integral bordisms groups in commutative PDEs).}\label{relation-between-integral-bordisms-groups-in-commutative-PDEs}
One has the following canonical isomorphism:
\begin{equation}\label{isomorphism-relation-between-integral-bordisms-groups-in-commutative-PDEs}
    \mathbb{Z}\bigotimes_{\Omega_\bullet^{E_k}}\mathbb{Z}{}^{S}Bor_{\bullet}(E_k)\cong\mathbb{Z}{}^{S}Cyc_{\bullet}(E_k).
\end{equation}
\end{lemma}
\begin{proof}
Follows directly from the extension of groups given at the bottom of
the commutative exact diagram
(\ref{commutative-diagram-integral-smooth-bordism-chain-complex})
and some properties between extension of groups. (See, e.g.,
\cite{PRA9}.)
\end{proof}

\begin{lemma}{\em(Relations between integral bordisms groups in commutative PDEs-2).}\label{relation-between-integral-bordisms-groups-in-commutative-PDEs-2}
If $H^2({}^{S}Cyc_{\bullet}(E_k),\Omega_\bullet^{E_k})=0$ one has
the following canonical isomorphism:
\begin{equation}\label{isomorphism-relation-between-integral-bordisms-groups-in-commutative-PDEs-2}
    {}^{S}Bor_{\bullet}(E_k)\cong\Omega_\bullet^{E_k}\times{}^{S}Cyc_{\bullet}(E_k).
\end{equation}
\end{lemma}

\begin{proof}
Follows directly from the extension of groups given at the bottom of
the commutative exact diagram
(\ref{commutative-diagram-integral-smooth-bordism-chain-complex})
and some properties between extension of groups. (See, e.g.,
\cite{PRA9}.)
\end{proof}
\begin{example}
In particular if $E_k\subset J^k_n(W)$ is a $0$-crystal PDE with
$\Omega^{E_k}_{n-1}=0$, one has:
${}^{S}Bor_{n-1}(E_k)\cong{}^{S}Cyc_{n-1}(E_k)$. Such an example is,
e.g., the d'Alembert equation $uu_{xy}-u_xu_y=0$ on the trivial
fiber bundle $\pi:W\equiv\mathbb{R}^3\to M\equiv\mathbb{R}^2$,
$(x,y,u)\mapsto(x,y)$. In fact, in such a case one has
$\Omega_1^{d'A}=0$ and also
${}^{S}Bor_{1}(d'A)\cong{}^{S}Cyc_{1}(d'A)\cong 0$. (For the
integral bordism group of this equation, and its generalizations,
see Refs.\cite{PRA9, PRA17, PRA25, PRA28, PRA-RAS1}.) \end{example}

\begin{lemma}{\em(Integral ringoid of PDE).}\label{integral-ringoid-pde}
A {\em ringoid} is a structure $(A,+,\cdot)$, where $A$ is a set and
$+$ is a binary operation such that $(A,+)$ is an abelian additive
group with zero $0\in A$; $\cdot$ is a partially binary operation,
i.e., it is defined only for some couples $(a,b)\in A\times A$, such
that it is associative, and distributive with respect to $+$, i.e.,
if $a\cdot b$ and $a\cdot c$ are defined, then it is defined also
$a\cdot(b+c)=a\cdot b+a\cdot c$. A {\em graded ringoid} is a set
$A=\bigoplus_nA_n$, where each $A_n$ is an abelian additive group
and there is a partial binary operation $\cdot$, associative, and
distributive with respect to $+$, such that if $a\in A_n$, $b\in
A_m$, then $a\cdot b\in A_{m+n}$, whenever it is defined.

Let $E_k\subset J^k_n(W)$ be a PDE, with $\pi:W\to M$ a fiber
bundle, $\dim W=m+n$, $\dim M=n$. Then the integral bordism groups
$\Omega_p^{E_k}$, $0\le p\le n-1$, identify a graded ringoid
$\Omega_\bullet^{E_k}$, that we call {\em integral ringoid} of
$E_k$, that is an extension of a graded ringoid contained in the
nonoriented bordism ring $\Omega_\bullet$.

One has the following commutative diagram
\begin{equation}\label{ringoid-hmomorphisms}
\xymatrix{&&0\ar[d]&&\\
0&{\scriptstyle
Hom_{ringoid}(\overline{K}^{E_k}_\bullet;\mathbb{R})}\ar[l]&{\scriptstyle
Hom_{ringoid}(\Omega^{E_k}_\bullet;\mathbb{R})}\ar[l]\ar[d]&
{\scriptstyle Hom_{ringoid}({}^{(n-1)}\Omega_\bullet;\mathbb{R})}\ar[l]\ar[ld]&0\ar[l]\\
&&{\scriptstyle\mathbf{H}_\bullet(E_k)}&&\\}
\end{equation}
where $\mathbf{H}_\bullet(E_k)\equiv\bigoplus_{0\le p\le
n-1}\mathbf{H}_p(E_k)$, that allows us to represent differential
$p$-conservation laws of order $k$ by means of ringoid homomorphisms
$\Omega^{E_k}_\bullet\to\mathbb{R}$, and as extensions of ringoid
homorphisms $\overline{K}^{E_k}_\bullet\to\mathbb{R}$.
\end{lemma}

\begin{proof}
Set
\begin{equation}\label{integral-ringoid}
    \Omega_\bullet^{E_k}\equiv\bigoplus_{0\le p\le
    n-1}\Omega_p^{E_k}.
\end{equation}
Each $\Omega_p^{E_k}$ are additive abelian groups, with addition
induced by disjoint union, $\sqcup$. Furthermore, there is a natural
product induced by the cartesian product, i.e.,
$[X_1]\cdot[X_2]=[X_1\times X_2]\in\Omega_{p_1+p_2}^{E_k}$, for
$[X_i]\in\Omega_{p_i}^{E_k}$, $0\le p_i\le n-1$, $i=1,2$, $0\le
p_1+p_2\le n-1$. This product it is not always defined for any
closed admissible integral manifolds $X_i$,  $i=1,2$, but only for
ones such that $X_1\times X_2$ is a closed integral admissible
manifold. Therefore $\Omega_\bullet^{E_k}$ is a graded ringoid. Set
${}^{(n-1)}\Omega_\bullet\equiv\bigoplus_{0\le p\le
    n-1}\Omega_p$. It has in a natural way a graded ringoid structure, with
    respect the same operations with respect to which
    $\Omega_\bullet$ is a graded ring. Furthermore, for any $0\le p\le
    n-1$, one has the following exact sequence, (see proof of
    Theorem 3.16 in \cite{PRA23}),
\begin{equation}
\xymatrix{0\ar[r]&\overline{K}_p^{E_k}\ar[r]&\Omega_p^{E_k}\ar[r]&\Omega_p\ar[r]&0\\}
\end{equation}
As a by-product one has also the following exact commutative
diagram:
\begin{equation}
\xymatrix{&&&0\ar[d]&\\
0\ar[r]&\overline{K}_\bullet^{E_k}\ar[r]&\Omega_\bullet^{E_k}\ar[r]&{}^{(n-1)}\Omega_\bullet\ar[r]\ar[d]&0\\
&&&\Omega_\bullet\\}
\end{equation}
where $\overline{K}_\bullet^{E_k}\equiv\bigoplus_{0\le p\le
    n-1}\overline{K}_p^{E_k}$.

A full $p$-conservation law is any function
$f:\Omega_p^{E_k}\to\mathbb{R}$, $0\le p\le n-1$. These, identify
elements of $\mathbf{H}_\bullet(E_k)\equiv\bigoplus_{0\le p\le
n-1}\mathbf{H}_p(E_k)$ in a natural way. In
$\mathbf{H}_\bullet(E_k)$ are contained also ones identified by
means of differential conservation laws of order $k$, identified
with $\mathfrak{I}(E_k)^\bullet\equiv\bigoplus_{0\le q\le
n-1}\mathfrak{I}(E_k)^q$, with
\begin{equation}\label{p-conservation-laws-order-k}
    {\frak I}(E_k)^{q}
\equiv\frac{\Omega^q(E_k)\cap
d^{-1}(C\Omega^{q+1}(E_k))}{d\Omega^{q-1}(E_k)\oplus\{C\Omega^q(E_k)\cap
d^{-1}(C\Omega^{q+1}(E_k))\}}.
\end{equation}
Here, $\Omega^q(E_k)$ is the space of smooth $q$-differential forms
on $E_k$ and $C\Omega^q(E_k)$ is the space of Cartan $q$-forms on
$E_k$, that are zero on the Cartan distribution $\mathbf{E}_k$ of
$E_k$. Therefore, $\beta\in C\Omega^q(E_k)$ iff
$\beta(\zeta_1,\cdots,\zeta_q)=0$, for all $\zeta_i\in
C^\infty(\mathbf{E}_k)$.\footnote{${\frak C}ons(E_k)$ can be
identified with the spectral term $E_1^{0,n-1}$ of the spectral
sequence associated to the filtration induced in the graded algebra
$\Omega^\bullet(E_\infty)\equiv\oplus_{q\ge 0}\Omega^q(E_\infty)$,
by the subspaces $C\Omega^q(E_\infty)\subset\Omega^q(E_\infty)$.
(For abuse of language we shall call ''conservation laws of
$k$-order'', characteristic integral $(n-1)$-forms too. Note that
$C\Omega^0(E_k)=0$. See also Refs.\cite{PRA11, PRA13, PRA20}.)} Any
$\alpha\in\mathfrak{I}(E_k)^\bullet$ identifies a ringoid
homomorphism $f[\alpha]:\Omega_\bullet^{E_k}\to\mathbb{R}$. More
precisely one has
$f[\alpha]([X_1]+[X_2])=<\alpha,X_1>+<\alpha,X_2>$, for $[\alpha]\in
{\frak I}(E_k)^{p}$, $[X_1],[X_2]\in \Omega_p^{E_k}$, and
$f[\alpha]([X_1]\cdot[X_2])=<\alpha_1,X_1><\alpha_2,X_2>$, for
$[\alpha]=[\alpha_1]+[\alpha_2]\in {\frak I}(E_k)^{p_1}\oplus {\frak
I}(E_k)^{p_2}$, $[X_1]\in \Omega_{p_1}^{E_k}$, $[X_2]\in
\Omega_{p_2}^{E_k}$.
\end{proof}

In the following we extend above results to PDE's in the category
$\mathfrak{Q}_S$.

\begin{lemma}{\em(Relations between lower order integral bigraded-bordisms in quantum super PDE's).}\label{quantum-PDE-lower-order-relation-bigraded-bordism}
Let $\hat E_k\subset \hat J^k_{m|n}(W)$ be a quantum super PDE on
the fiber bundle $\pi:W\to M$, $\dim_B W=(m|n,r|s)$, $\dim_A M=m|n$,
$B=A\times E$, $E$ a quantum superalgebra that is also a $Z$-module,
with $Z=Z(A)$ the centre of $A$. Let ${}^{S}C_{p|q}(\hat E_k)$ be
the set of all compact $p|q$-dimensional, (with respect to $A$),
admissible integral smooth manifolds of $\hat E_k$, $0\le p\le m$,
$0\le q\le n$. The disjoint union gives an addition on
${}^{S}C_{p|q}(\hat E_k)$ with $\varnothing$ as the zero element. Let
us consider the homomorphisms $\partial_{p|q}:{}^{S}C_{p|q}(\hat
E_k)\to {}^{S}C_{p-1|q-1}(\hat E_k)$ that associates to any element
$a\in{}^{S}C_{p|q}(\hat E_k)$ its boundary $\partial
a=\partial_{p|q}(a)$. So we obtain the following chain complex of
abelian groups {\em(integral smooth bigraded-bordisms chain
complex)} of $\hat E_k\subset \hat J^k_{m|n}(W)$:
\begin{equation}\label{integral-smooth-bigraded-bordism-chain-complex}
\xymatrix{{\scriptstyle {}^{S}C_{m|n}(\hat E_k)}\ar[r]^(0.5){\partial_{m|n}}&{\scriptstyle {}^{S}C_{m-1|n-1}(\hat
E_k)}\ar[r]^(0.5){\partial_{m-1|n-1}}&{\scriptstyle {}^{S}C_{m-2|n-2}(\hat
E_k)}\ar[r]^(0.6){\partial_{m-2|n-2}}&
\cdots\ar[r]^(.3){\partial_{r|r}}&{\scriptstyle {}^{S}C_{m-r|n-r}(\hat
E_k)}\\}
\end{equation}
where $r=\min\{m,n\}$. Then the $p|q$-integral bordism groups $\Omega_{p|q}^{\hat
E_k}$, $(m-r)<p<m$, $(n-r)<q<n$, can be represented by means of the homology
of the chain complex
{\em(\ref{integral-smooth-bigraded-bordism-chain-complex})}.

One has the following canonical isomorphism:
\begin{equation}\label{isomorphism-relation-between-integral-bigraded-bordisms-groups-in-quantum-PDEs}
    \mathbb{Z}\bigotimes_{\Omega_{\bullet|\bullet}^{\hat E_k}}\mathbb{Z}{}^{S}Bor_{\bullet|\bullet}(\hat E_k)
    \cong\mathbb{Z}{}^{S}Cyc_{\bullet|\bullet}(\hat E_k).
\end{equation}
Furthermore, if $H^2({}^{S}Cyc_{\bullet|\bullet}(\hat
E_k),\Omega_{\bullet|\bullet}^{\hat E_k})=0$ one has the following
canonical isomorphism:
\begin{equation}\label{isomorphism-relation-between-integral-bigraded-bordisms-groups-in-quantum-PDEs-2}
    {}^{S}Bor_{\bullet|\bullet}(\hat E_k)\cong\Omega_{\bullet|\bullet}^{\hat E_k}\times{}^{S}Cyc_{\bullet|\bullet}(\hat E_k).
\end{equation}
\end{lemma}

\begin{proof}
The proof can be conduced similarly to the ones for Lemma
\ref{commutative-PDE-lower-order-relation-bordism}, Lemma
\ref{relation-between-integral-bordisms-groups-in-commutative-PDEs}
and Lemma
\ref{relation-between-integral-bordisms-groups-in-commutative-PDEs-2}.
\end{proof}

Similarly we can prove the following lemma concerning the total
analogous of the complex
(\ref{integral-smooth-bigraded-bordism-chain-complex}) too.

\begin{lemma}{\em(Relations between lower order integral total-bordisms in quantum super PDE's).}\label{quantum-PDE-lower-order-relation-total-bordism}
Let $\hat E_k\subset \hat J^k_{m|n}(W)$ be a quantum super PDE on
the fiber bundle $\pi:W\to M$, $\dim_B W=(m|n,r|s)$, $\dim_A M=m|n$,
$B=A\times E$, $E$ a quantum superalgebra that is also a $Z$-module,
with $Z=Z(A)$ the centre of $A$. Let ${}^{S}C_{p}(\hat E_k)$, $0\le
p\le m+n$, be the set of all compact $u|v$-dimensional, (with
respect to $A$), admissible integral smooth manifolds of $\hat E_k$,
such that $u+v=p$. The disjoint union gives an addition on
${}^{S}C_{p}(\hat E_k)$ with $\varnothing$ as the zero element. Thus
we can write
\begin{equation}\label{total-smooth-integral-bordisms-quantum-pde}
{}^{S}C_{p}(\hat E_k)=\bigoplus_{u,v; u+v=p}{}^{S}C_{u|v}(\hat
E_k)={}^{Tot,S}C_{p}(\hat E_k).
\end{equation}

Let us consider the homomorphisms $\partial_{p}:{}^{S}C_{p}(\hat
E_k)\to {}^{S}C_{p-1}(\hat E_k)$ that associates to any element
$a\in{}^{S}C_{p}(\hat E_k)$ its boundary $\partial
a=\partial_{p}(a)$, i.e., one has:
\begin{equation}\label{total-smooth-integral-bordisms-quantum-pde-morphisms}
\begin{array}{ll}
  \partial_pa&=\partial_p(a_{p|0},a_{p-1|1},a_{p-2|2},\cdots,a_{0|p})\\
  &=(\partial_{p|0}a_{p|0},\partial_{p-1|1}a_{p-1|1},\partial_{p-2|2}a_{p-2|2},\cdots,\partial_{0|p}a_{0|p})\\
  &\in\bigoplus_{u,v; u+v=p-1}{}^{S}C_{u|v}(\hat
E_k)={}^{S}C_{p-1}(\hat E_k).
\end{array}
\end{equation}
One has $\partial_{p-1}\circ\partial_p=0$. So we get the following
chain complex of abelian groups {\em(integral smooth
bigraded-bordisms chain complex)} of $\hat E_k\subset \hat
J^k_{m|n}(W)$:
\begin{equation}\label{integral-smooth-total-bordism-chain-complex}
\xymatrix{{}^{S}C_{n}(\hat E_k)\ar[r]^{\partial_{n}}&{}^{S}C_{n-1}(\hat
E_k)\ar[r]^{\partial_{n-1}}&{}^{S}C_{n-2}(\hat
E_k)\ar[r]^{\partial_{n-2}}&
\cdots\ar[r]^(.3){\partial_{1}}&{}^{S}C_{0}(\hat
E_k).\\}
\end{equation}
Then the $p$-integral total bordism groups $\Omega_{p}^{\hat
E_k}$, $0<p<m+n$, can be represented by means of the homology of the
chain complex
{\em(\ref{integral-smooth-total-bordism-chain-complex})}.

One has the following canonical isomorphism:
\begin{equation}\label{isomorphism-relation-between-integral-total-bordisms-groups-in-quantum-PDEs}
    \mathbb{Z}\bigotimes_{\Omega_{\bullet}^{\hat E_k}}\mathbb{Z}{}^{S}Bor_{\bullet}(\hat E_k)
    \cong\mathbb{Z}{}^{S}Cyc_{\bullet}(\hat E_k).
\end{equation}
Furthermore, if $H^2({}^{S}Cyc_{\bullet}(\hat
E_k),\Omega_{\bullet}^{\hat E_k})=0$ one has the following canonical
isomorphism:
\begin{equation}\label{isomorphism-relation-between-integral-total-bordisms-groups-in-quantum-PDEs-2}
    {}^{S}Bor_{\bullet}(\hat E_k)\cong\Omega_{\bullet}^{\hat E_k}\times{}^{S}Cyc_{\bullet}(\hat E_k).
\end{equation}
\end{lemma}
\begin{proof}
The proof is similar to the one of Lemma \ref{quantum-PDE-lower-order-relation-bigraded-bordism}.
\end{proof}

\begin{lemma}{\em(Integral ringoid of PDE's in $\mathfrak{Q}_S$ and quantum conservation laws).}
Let $\hat E_k\subset \hat J^k_{m|n}(W)$ be a PDE in the category
$\mathfrak{Q}_S$ as defined in Lemma
\ref{quantum-PDE-lower-order-relation-total-bordism}. Then,
$\Omega^{\hat E_k}_\bullet\equiv\bigoplus_{0\le p\le
m+n}\Omega^{\hat E_k}_p$, has a natural structure of graded ringoid,
with respect to the (partial) binary operations similar to the
commutative case. We call $\Omega^{\hat E_k}_\bullet$ the {\em
integral ringoid} of $\hat E_k$. Furthermore, quantum conservation
laws of order $k$, $\hat f\in Map(\Omega^{\hat E_k}_{p|q},B_k)\equiv
\mathbf{H}_{p|q}(\hat E_k)$, can be projected on their classic
limits $\hat f\mapsto\hat f_C\equiv c\circ\hat f\in Map(\Omega^{\hat
E_k}_{p|q},\mathbb{K})\equiv \mathbf{H}_{p|q}(\hat E_k)_C$. By
passing to the corresponding total spaces, we get the following
exact commutative diagram:
\begin{equation}\label{quantum-conservation-laws-classic-limit}
\xymatrix{0\ar[d]&0\ar[d]&\\
\mathbf{H}_{p|q}(\hat E_k)\ar[d]\ar[r]&\mathbf{H}_{p|q}(\hat E_k)_C\ar[d]\ar[r]&0\\
\mathbf{H}_\bullet(\hat E_k)\ar[r]&\mathbf{H}_\bullet(\hat E_k)_C\ar[r]&0\\}
\end{equation}
Moreover, graded ringoid hmomorphisms $\hat h\in
Hom_{ringoid}(\Omega_\bullet^{\hat E_k},\mathbb{K})$, can be
identified by means of classic limit quantum conservation laws of
$\hat E_k$. One has the following exact commutative diagram:
\begin{equation}\label{ringoid-homomorphisms-quantum-conservation-laws}
\xymatrix{0\ar[r]&{}^{R}\mathbf{H}_\bullet(\hat E_k)\ar[r]\ar[d]&\mathbf{H}_\bullet(\hat E_k)\ar[d]\\
0\ar[r]&Hom_{ringoid}(\Omega_\bullet^{\hat
E_k},\mathbb{K})\ar[r]\ar[d]&\mathbf{H}_\bullet(\hat E_k)_C\ar[d]\\
&0&0\\}
\end{equation}
that defines a subalgebra ${}^{R}\mathbf{H}_\bullet(\hat E_k)$ of
$\mathbf{H}_\bullet(\hat E_k)$, whose elements we call {\em rigid quantum
conservation laws}, and whose classic limit can be identified with
ringoid homomorphisms $\Omega_\bullet^{\hat E_k}\to\mathbb{K}$. In
particular, quantum conservation laws arising by full quantum
differential form classes
\begin{equation}\label{quantum-differential-conservation-laws}
\left\{\begin{array}{ll}
         [\alpha]& \in \bigoplus_{p,q\ge 0 }\hat{\frak I}(\hat
E_k)^{p|q} \\
         & \hat{\frak I}(\hat
E_k)^{p|q}\equiv{{\widehat{\Omega}^{p|q}(\hat E_k)\cap
d^{-1}(C\widehat{\Omega}^{p+1|q+1}(\hat
E_k))}\over{d\widehat{\Omega}^{p-1|q-1}(E_k)\oplus\{C\widehat{\Omega}^{p|q}(\hat
E_k)\cap d^{-1}(C\widehat{\Omega}^{p+1|q+1}(\hat E_k)))\}}}
       \end{array}
\right.
\end{equation}
belong to ${}^{R}\mathbf{H}_\bullet(\hat E_k)$.
\end{lemma}
\begin{proof}
The proof follows directly from above lemmas. (For details on spaces $\hat{\frak I}(\hat
E_k)^{p|q}$ see Refs.\cite{PRA15, PRA20, PRA22}.)
\end{proof}

Let us, now, denote $\mathop{\Omega}\limits_c{}_{p|q}^{\hat E_k}$
(or $\mathop{\Omega}\limits_c{}_{p+q}^{\hat E_k}$), the classic
limit of integral $(p|q)$-bordism group of $\hat E_k$, i.e., the
$(p+q)$-bordism group of classic limits of integral supermanifolds
$N\subset\hat E_k$, such that $\dim_AN=p|q$. Furthermore, let us
denote by $\mathop{\Omega}\limits_c{}_{\widehat{p+q}}^{\hat E_k}$
the classic limit of total integral $(p+q)$-bordism group of $\hat
E_k$, i.e., the $(p+q)$-bordism group of classic limits of integral
supermanifolds $N\subset \hat E_k$, such that $\dim_AN=u|v$, with
$u+v=p+q$. One has the following exact commutative diagram:

\begin{equation}\label{commutative-exact-diagram-relation-integral-bord-classic-limit-total-integral-bord}
\xymatrix{0\ar[r]&\Omega_{p|q}^{\hat
E_k}\ar[d]\ar[r]&\Omega_{p+q}^{\hat E_k}\ar[d]\\
0\ar[r]&\mathop{\Omega}\limits_c{}_{p+q}^{\hat E_k}\ar[d]\ar[r]&\mathop{\Omega}\limits_c{}_{\widehat{p+q}}^{\hat E_k}\ar[d]\\
&0&0\\}
\end{equation}

Taking into account Theorem 3.6 in \cite{PRA22} we get a relation
between $\Omega_{m-1|n-1}^{\hat E_k}$,
$\mathop{\Omega}\limits_c{}_{p|q}^{\hat E_k}$ and the bordism group
$\Omega_{m+n-2}$. In fact, we can see that there is a relation
between integral bordism groups in quantum super PDEs and Reinhart
integral bordism groups of commutative manifolds. More precisely,
let $N_0, N_1\subset\hat E_k\subset\hat J^k_{m|n}(W)$ be closed
admissible integral quantum supermanifolds of a quantum super PDE
$\hat E_k$, of dimension $(m-1|n-1)$ over $A$, such that $N_0\sqcup
N_1=\partial V$, for some admissible integral quantum supermanifold
$V\subset \hat E_k$, of dimension $(m|n)$ over $A$. Then $(N_0)_C\sqcup
(N_1)_C=\partial V_C$ iff $(N_0)_C$ and $(N_1)_C$ have the same
Stiefel-Whitney and Euler characteristic numbers. In fact, by
denoting $ \Omega_p^\uparrow$ the Reinhart $p$-bordism groups and
$\Omega_p$ the $p$-bordism group for closed smooth finite
dimensional manifolds respectively, one has the following exact
commutative diagram
\begin{equation}\label{Reinhart-bordism-groups-relation}
\xymatrix{0\ar[r]&K^{\hat
E_k}_{m-1|n-1;m+n-2}\ar[r]&\Omega_{m-1|n-1}^{\hat E_k}\ar[r]&
\mathop{\Omega}\limits_c{}_{m+n-2}^{\hat E_k}\ar[d]\ar[r]\ar[dr]&0&\\
&0\ar[r]&K^\uparrow_{m+n-2}\ar[r]&\Omega^\uparrow_{m+n-2}\ar[r]&
\Omega_{m+n-2}\ar[r]& 0\\}
\end{equation}

This has as a consequence that if $N_0\sqcup N_1=\partial V$, then
$(N_0)_C\sqcup (N_1)_C=\partial V_C$ iff $(N_0)_C$ and $(N_1)_C$ have
the same Stiefel-Whitney and Euler characteristic
numbers.\footnote{Note that for $p+q=3$ one has $K_3^\uparrow=0$,
hence one has $\Omega_3^\uparrow=\Omega_3$.}

From above exact commutative diagram one has that
$\Omega_{m-1|n-1}^{\hat E_k}$ is an extension of a subgroup of
$\Omega_{m+n-2}$.

Let us consider, now, the following lemmas.

\begin{lemma}{\em\cite{PRA25}}\label{bordism-groups-crystallography}
Bordism groups, $\Omega_p$, relative to smooth manifolds can be
considered as extensions of some crystallographic subgroup
$G\triangleleft G(d)$.
\end{lemma}

\begin{lemma}
If the group $G$ is an extension of $H$, any subgroup
$\widetilde{G}\vartriangleleft G$ is an extension of a subgroup
$\widetilde{H}\vartriangleleft H$.
\end{lemma}

\begin{proof}
In fact $\widetilde{G}$ is an extension of
$p(\widetilde{G})\vartriangleleft H$, with respect to the following
short exact sequence:
$\xymatrix{0\ar[r]&K\ar[r]&G\ar[r]^{p}&H\ar[r]&0\\}$.
\end{proof}

Therefore by using above two lemmas, we get also that
$\Omega_{m-1|n-1}^{\hat E_k}$ is an extension of some
crystallographic subgroup $G\triangleleft G(d)$.
\end{proof}

The theorem below relates the integrability properties of a quantum
super PDE to crystallographic groups. Let us first give the
following definition.

\begin{definition}
We say that a quantum super PDE $\hat E_k\subset \hat J^k_{m|n}(W)$
is an {\em extended crystal quantum super PDE}, if conditions of
Theorem \ref{crystal-structure-quantum-super-pdes} are verified.
Then, for such a PDE $\hat E_k$ are defined its crystal group $G(d)$
and crystal dimension $d$.
\end{definition}

In the following we relate crystal structure of quantum super PDE's
to the existence of global smooth solutions for smooth boundary
value problems, by identifying an algebraic-topological obstruction.

\begin{theorem}\label{obstruction-smooth-solutions}
Let $B_k$ be the model quantum superalgebra of $\hat J^k_{m|n}(W)$,
$k\ge 0$. (See \cite{PRA21, PRA22}.) We denote also by $
B_\infty=\lim_k B_k$.\footnote{We also adopt the notation $B_k(A)$
and $B_\infty(A)$, whether it is necessary to specify the starting
original quantum super algebra $A$.} Let $\hat E_k\subset \hat
J^k_{m|n}(W)$ be a quantum formally integrable and completely
qunatum superintegrable quantum super PDE. Then, in the algebra
$\mathbf{H}_{m-1|n-1}(\hat E_k)\equiv Map(\Omega_{m-1|n-1}^{\hat
E_k};B_k)$, {\em Hopf quantum superalgebra} of $\hat E_k$, there is
a quantum sub-superalgebra, {\em (crystal Hopf quntum superalgebra)}
of $\hat E_k$.\footnote{Recall that with the term {\em quantum Hopf superalgebra} we mean an extension
$\xymatrix{A\ar[r]&C\equiv A\otimes_{\mathbb{K}}H\ar[r]&D\ar[r]&D/C\ar[r]&0\\}$, where $H$ is an Hopf $\mathbb{K}$-algebra and $A$ is a quantum superalgebra. (For more details on generalized Hopf algebras, associated to PDE's, see Refs.\cite{PRA10, PRA11, PRA22}.)} On such an algebra we can represent the quantum
superalgebra $B^{G(d)}$ associated to the quantum crystal supergroup
$G(d)$ of $\hat E_k$. (This justifies the name.) We call {\em
quantum crystal conservation superlaws} of $\hat E_k$ the elements
of its quantum Hopf crystal superalgebra. Then, the obstruction to
find global smooth solutions of $\hat E_k$, for integral boundaries
with orientable classic limit, can be identified with the quotient
$\mathbf{H}_{m-1|n-1}(\hat E_\infty)/B_\infty^{\Omega_{m+n-2}}$.
\end{theorem}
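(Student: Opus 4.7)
The plan is to convert the crystal structure of the integral bordism group, provided by Theorem \ref{crystal-structure-quantum-super-pdes}, into a crystal structure on its $B_k$-valued function algebra by contravariant functoriality, and then to read off the obstruction from the classic-limit Reinhart diagram \eqref{Reinhart-bordism-groups-relation}. From Theorem \ref{crystal-structure-quantum-super-pdes} we have, for some crystallographic subgroup $G\triangleleft G(d)$, a short exact sequence
\begin{equation*}
0\to K\to \Omega_{m-1|n-1}^{\hat E_k}\to G\to 0.
\end{equation*}
The first step is to endow $\mathbf{H}_{m-1|n-1}(\hat E_k)=Map(\Omega_{m-1|n-1}^{\hat E_k},B_k)$ with its quantum Hopf superalgebra structure: pointwise operations from the quantum superalgebra $B_k$ supply the product and unit, while the group law on $\Omega_{m-1|n-1}^{\hat E_k}$ (disjoint union of integral boundaries, with $[\varnothing]$ as identity) dualizes to coproduct, counit and antipode. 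This justifies the name \emph{Hopf quantum superalgebra} in the sense recalled in the footnote of the theorem.

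Next, I would apply $Map(-,B_k)$ to the composite $\Omega_{m-1|n-1}^{\hat E_k}\twoheadrightarrow G\hookrightarrow G(d)$. Pull-back along this composite produces an injection of Hopf superalgebras
\begin{equation*}
B_k^{G(d)}\equiv Map(G(d),B_k)\hookrightarrow \mathbf{H}_{m-1|n-1}(\hat E_k),
\end{equation*}
whose image is, by construction, closed under product and coproduct and is stable under the $G(d)$-translation action; it is the desired crystal Hopf quantum sub-superalgebra, on which $B^{G(d)}$ is represented faithfully. Its elements — the quantum crystal conservation superlaws — are precisely the $B_k$-valued functions on the integral bordism group that factor through the crystal quotient.

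For the obstruction identification, take prolongations to $\hat E_\infty$ and pass to the commutative exact diagram \eqref{Reinhart-bordism-groups-relation}. A global quantum smooth solution filling an admissible boundary $N_0\sqcup N_1$ exists iff $[N_0]=[N_1]$ in $\Omega_{m-1|n-1}^{\hat E_\infty}$; under the hypothesis that the classic limits $(N_i)_C$ are orientable, the Reinhart diagram reduces the classical part of this equality to equality in $\Omega_{m+n-2}$, detected by the Stiefel–Whitney and Euler characteristic numbers. Consequently the $B_\infty$-valued invariants that always vanish on cobordant pairs of (classical-limit) boundaries are precisely those in the image of $B_\infty^{\Omega_{m+n-2}}\hookrightarrow \mathbf{H}_{m-1|n-1}(\hat E_\infty)$, and the genuine obstruction survives in the quotient
\begin{equation*}
\mathbf{H}_{m-1|n-1}(\hat E_\infty)/B_\infty^{\Omega_{m+n-2}}.
\end{equation*}

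The main obstacle I anticipate is not the abstract diagram chasing but the compatibility bookkeeping: one must verify that the factorization $\Omega_{m-1|n-1}^{\hat E_k}\to G\hookrightarrow G(d)$ interacts correctly with the group law so that the pulled-back $B_k^{G(d)}$ is a genuine Hopf sub-superalgebra (not merely an additive subspace), and that passage to $k=\infty$ is compatible with the extension-of-groups structure so that $B_\infty^{\Omega_{m+n-2}}$ really embeds as the intended ``classical'' Hopf sub-superalgebra. Once these compatibilities are checked, the orientability hypothesis exactly matches the Reinhart obstruction in \eqref{Reinhart-bordism-groups-relation}, and the identification of the obstruction quotient follows.
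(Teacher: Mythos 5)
Your proposal is correct and follows essentially the same route as the paper: existence of a smooth solution for a boundary with orientable classic limit is reduced to the vanishing of all $B_\infty$-valued characteristic supernumbers in $\mathbf{H}_{m-1|n-1}(\hat E_\infty)$, the crystal structure from Theorem \ref{crystal-structure-quantum-super-pdes} (equivalently the map of the bordism group into $\widetilde{\Omega}_{m+n-2}\vartriangleleft G(d)$ coming from the Reinhart diagram (\ref{Reinhart-bordism-groups-relation})) is dualized to exhibit $B_\infty^{\Omega_{m+n-2}}$ as the crystal Hopf sub-superalgebra, and the obstruction is the quotient. Your pull-back identification of $B_k^{G(d)}$ inside $\mathbf{H}_{m-1|n-1}(\hat E_k)$ is the same (equally informal) identification the paper makes when it represents $B_\infty^{\widetilde{\Omega}_{m+n-2}}$ by $B_\infty^{\Omega_{m+n-2}}$ or $B_\infty^{G(d)}$, so there is no essential difference from the paper's argument.
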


\begin{proof}
Let $N_0, N_1\subset \hat E_k$ be two respectively initial and
final, closed compact Cauchy data of $\hat E_k$. Then there exists a
weak, (resp. singular, resp. smooth) solution $V\subset \hat E_k$,
such that $\partial V=N_0\sqcup N_1$, iff $X\equiv N_0\sqcup
N_1\in[0]\in\Omega_{m-1|n-1,w}^{\hat E_k}$, (resp.
$X\in[0]\in\Omega_{m-1|n-1,s}^{\hat E_k}$, resp.
$X\in[0]\in\Omega_{m-1|n-1}^{\hat E_k}$). Let $X_C$ be orientable,
then $X$ is the boundary of a smooth solution, iff $X$ has zero all
the integral characteristic quantum supernumbers, i.e.,
$<\alpha,X>=0$, $\forall\alpha\in\mathbf{H}_{m-1|n-1}(\hat
E_\infty)=Map(\Omega_{m-1|n-1}^{\hat E_\infty},B_\infty)$. Taking
into account the following short exact sequence: $0\to
\Omega_{m-1|n-1,w}^{\hat E_k}\to\widetilde{\Omega}_{m+n-2}$, where
$\widetilde{\Omega}_{m+n-2}\vartriangleleft G(d)$, for some
crystallographic group $G(d)$, we get also the following short exact
sequence: $\mathbf{H}_{m-1|n-1}(\hat E_\infty)\leftarrow
B_\infty^{\widetilde{\Omega}_{m+n-2}}\leftarrow 0$. So
$B_\infty^{\widetilde{\Omega}_{m+n-2}}$ can be identified with a
subalgebra of $\mathbf{H}_{m-1|n-1}(\hat E_\infty)$. Then the
obstruction to find smooth solutions can be identified with the
quotient $\mathbf{H}_{m-1|n-1}(\hat E_\infty)/
B_\infty^{\widetilde{\Omega}_{m+n-2}}$. Taking into account that
$\widetilde{\Omega}_{m+n-2}\vartriangleleft\Omega_{m+n-2}\vartriangleleft
G(d)$, we can also represent $B^{\widetilde{\Omega}_{m+n-2}}$ with
$B_\infty^{\Omega_{m+n-2}}$, or with $B_\infty^{G(d)}$. Thus, it is
justified also call $B_\infty^{\widetilde{\Omega}_{m+n-2}}$ as
crystal quantum superlaws algebra of $\hat E_k$.
\end{proof}

\begin{definition}
We define {\em crystal obstruction} of $\hat E_k$ the above quotient
of algebras, and put: $ cry(\hat E_k)\equiv
\mathbf{H}_{m-1|n-1}(\hat E_\infty)/B_\infty^{\Omega_{m+n-2}}$. We
call {\em quantum $0$-crystal super PDE} a quantum super PDE $\hat
E_k\subset \hat J^k_{m|n}(W)$ such that $cry(\hat E_k)=0$.
\end{definition}

\begin{remark}
A quantum extended $0$-crystal super PDE $\hat E_k\subset \hat
J^k_{m|n}(W)$ does not necessitate to be a quantum $0$-crystal super
PDE. In fact $\hat E_k$ is an extended $0$-crsytal quantum super PDE
if $\Omega_{m-1|n-1,w}^{\hat E_k}=0$. This does not necessarily
implies that $\Omega_{m-1|n-1}^{\hat E_k}=0$. In fact, the different
types of integral bordism groups of PDE's in the category
$\mathfrak{Q}_S$, are related by the following proposition.
\end{remark}

\begin{proposition}{\em(Relations between integral bordism groups).}{\em\cite{PRA22}}
The different types of integral bordism groups for a quantum super
PDE, are related by the exact commutative diagram reported in
{\em(\ref{relations-between-integral-bordism-groups-commutative-diagram})}.

\begin{equation}\label{relations-between-integral-bordism-groups-commutative-diagram}
\xymatrix{
&0\ar[d]&0\ar[d]&0\ar[d]&\\
0\ar[r]&K^{\hat E_k}_{m-1|n-1,w/(s,w)}\ar[d]\ar[r]&
K^{\hat E_k}_{m-1|n-1,w}\ar[d]\ar[r]&K^{\hat E_k}_{m-1|n-1,s,w}\ar[d]\ar[r]&0\\
0\ar[r]&K^{\hat E_k}_{m-1|n-1,s}\ar[d]\ar[r]&
\Omega^{\hat E_k}_{m-1|n-1}\ar[d]\ar[r]&\Omega^{\hat E_k}_{m-1|n-1,s}\ar[d]\ar[r]&0\\
&0\ar[r]&\Omega^{\hat E_k}_{m-1|n-1,w}\ar[d]\ar[r]&\Omega^{\hat E_k}_{m-1|n-1,w}\ar[d]\ar[r]&0\\
&&0&0&}
\end{equation}

One has the canonical isomorphisms:
\begin{equation}\label{canonical-isomorphisms-integral-bordism-groups-comm-pde}
\left\{   \begin{array}{l}
     K^{\hat E_k}_{m-1|n-1,w/(s,w)}\cong K^{\hat E_k}_{m-1|n-1,s}\\
\Omega^{\hat E_k}_{m-1|n-1}/K^{\hat E_k}_{m-1|n-1,s}\cong
\Omega^{\hat E_k}_{m-1|n-1,s}\\
\Omega^{\hat E_k}_{m-1|n-1,s}/K^{\hat
E_k}_{m-1|n-1,s,w}\cong\Omega^{\hat E_k}_{m-1|n-1,w}\\
\Omega^{\hat E_k}_{m-1|n-1}/K^{\hat
E_k}_{m-1|n-1,w}\cong\Omega^{\hat E_k}_{m-1|n-1,w}.\\
   \end{array}\right.
\end{equation}

\end{proposition}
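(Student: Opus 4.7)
The starting point is to observe that the three integral bordism relations (full, singular, weak) on closed admissible integral quantum sub-supermanifolds of $\hat E_k$ of dimension $(m-1|n-1)$ are progressively coarser: a smooth bordism is a singular one with empty singular locus, and a singular bordism is a weak one realized by an admissible integral chain. Hence there are canonical surjections of abelian groups
\begin{equation}
\Omega^{\hat E_k}_{m-1|n-1}\twoheadrightarrow\Omega^{\hat E_k}_{m-1|n-1,s}\twoheadrightarrow\Omega^{\hat E_k}_{m-1|n-1,w},
\end{equation}
whose kernels are, by definition, $K^{\hat E_k}_{m-1|n-1,s}$ and $K^{\hat E_k}_{m-1|n-1,s,w}$; the kernel of the composite is $K^{\hat E_k}_{m-1|n-1,w}$. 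This already furnishes the middle row, the right column, and the second and third canonical isomorphisms of (\ref{canonical-isomorphisms-integral-bordism-groups-comm-pde}) via the first isomorphism theorem, as well as the bottom row in which the map $\Omega^{\hat E_k}_{m-1|n-1,w}\to\Omega^{\hat E_k}_{m-1|n-1,w}$ is simply the identity.

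Next I would stack the two short exact sequences
\begin{equation}
\xymatrix{0\ar[r]&K^{\hat E_k}_{m-1|n-1,s}\ar[r]\ar[d]&\Omega^{\hat E_k}_{m-1|n-1}\ar[r]\ar@{=}[d]&\Omega^{\hat E_k}_{m-1|n-1,s}\ar[r]\ar[d]&0\\ 0\ar[r]&K^{\hat E_k}_{m-1|n-1,w}\ar[r]&\Omega^{\hat E_k}_{m-1|n-1}\ar[r]&\Omega^{\hat E_k}_{m-1|n-1,w}\ar[r]&0}
\end{equation}
and invoke the snake lemma. Since the middle vertical is the identity and the right vertical is the canonical surjection with kernel $K^{\hat E_k}_{m-1|n-1,s,w}$, the connecting sequence reduces to $0\to K^{\hat E_k}_{m-1|n-1,s}\to K^{\hat E_k}_{m-1|n-1,w}\to K^{\hat E_k}_{m-1|n-1,s,w}\to 0$, which is precisely the top row of (\ref{relations-between-integral-bordism-groups-commutative-diagram}). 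Setting $K^{\hat E_k}_{m-1|n-1,w/(s,w)}:=\ker\bigl(K^{\hat E_k}_{m-1|n-1,w}\twoheadrightarrow K^{\hat E_k}_{m-1|n-1,s,w}\bigr)$, exactness identifies it with $K^{\hat E_k}_{m-1|n-1,s}$, giving the first isomorphism in (\ref{canonical-isomorphisms-integral-bordism-groups-comm-pde}); the fourth one, $\Omega^{\hat E_k}_{m-1|n-1}/K^{\hat E_k}_{m-1|n-1,w}\cong\Omega^{\hat E_k}_{m-1|n-1,w}$, is the first isomorphism theorem applied to the composite surjection.

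The remaining vertical exactness is now formal: the left column collapses to the isomorphism just obtained, the middle column is the defining exact sequence for $K^{\hat E_k}_{m-1|n-1,w}$, and the right column is the defining exact sequence for $K^{\hat E_k}_{m-1|n-1,s,w}$. Commutativity of every square is immediate once all arrows are read off as either quotient projections or inclusions, so there is nothing to check beyond the bookkeeping.

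The real work, and the only genuine obstacle, lies upstream of the diagram chase: one must verify that in the quantum super category $\mathfrak{Q}_S$ each of the three bordism relations is an equivalence relation compatible with disjoint union, and that the three sets of equivalence classes are themselves abelian groups with zero represented by the nullbordant Cauchy data. The argument parallels the commutative case treated in Lemma \ref{commutative-PDE-lower-order-relation-bordism}, but it requires the formal quantum integrability and complete quantum superintegrability of $\hat E_k$ in order to construct, via integrable full quantum vector fields on $\hat E_{k+\infty}$ as in Fig.~\ref{quantum-supermanifold-solution}, actual quantum sub-supermanifolds realizing the class identifications. Once this categorical foundation is in place (as established in \cite{PRA22, PRA31}), the statement of the proposition reduces to the snake-lemma argument sketched above.
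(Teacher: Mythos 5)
Your argument is correct and is essentially the standard one underlying this statement, which the paper itself does not reprove but imports from \cite{PRA22}: the three bordism relations coarsen successively, the induced surjections $\Omega^{\hat E_k}_{m-1|n-1}\to\Omega^{\hat E_k}_{m-1|n-1,s}\to\Omega^{\hat E_k}_{m-1|n-1,w}$ and their kernels produce the $3\times 3$ exact commutative diagram and the four isomorphisms by the first isomorphism theorem (your snake-lemma step is just a packaged version of the direct check that $K^{\hat E_k}_{m-1|n-1,w}\twoheadrightarrow K^{\hat E_k}_{m-1|n-1,s,w}$ has kernel $K^{\hat E_k}_{m-1|n-1,s}$). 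Your closing remark correctly locates the only non-formal content in the well-definedness of the bordism groups and of these canonical surjections in $\mathfrak{Q}_S$, which is exactly what the cited reference supplies.
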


\begin{cor}\label{main7}
Let $\hat E_k\subset \hat J^k_{m|n}(W)$ be a quantum $0$-crystal
super PDE. Let $N_0, N_1\subset \hat E_k$ be two closed initial and
final Cauchy data of $\hat E_k$ such that $X\equiv N_0\sqcup
N_1\in[0]\in\Omega_{m-1|n-1}$, and such that $X_C$ is orientable.
Then there exists a smooth solution $V\subset \hat E_k$ such that
$\partial V=X$.
\end{cor}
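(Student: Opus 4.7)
My plan is to derive this as a direct corollary of Theorem~\ref{obstruction-smooth-solutions} once the definition of quantum $0$-crystal super PDE is unpacked. First I would observe that the hypothesis $X=N_0\sqcup N_1\in[0]\in\Omega_{m-1|n-1}^{\hat E_k}$ already guarantees the existence of at least one integral quantum supermanifold $V\subset\hat E_k$ with $\partial V=X$; the only thing to upgrade is its regularity. According to Theorem~\ref{obstruction-smooth-solutions}, provided $X_C$ is orientable, a smooth bordism $V$ exists if and only if $\langle\alpha,X\rangle=0$ for every $\alpha\in\mathbf{H}_{m-1|n-1}(\hat E_\infty)$, so it suffices to verify this vanishing.

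Next I would invoke the hypothesis $cry(\hat E_k)=0$, which by definition means
$\mathbf{H}_{m-1|n-1}(\hat E_\infty)=B_\infty^{\Omega_{m+n-2}}$;
thus every characteristic integral quantum supernumber on $\hat E_\infty$ factors through the ordinary bordism class of the classic limit in $\Omega_{m+n-2}$. I would then feed into this the observation that the classic-limit functor sends the quantum bordism $V$ to an ordinary smooth bordism $V_C$ with $\partial V_C=X_C$, so $[X_C]=0\in\Omega_{m+n-2}$. This step uses only diagrams (\ref{Reinhart-bordism-groups-relation}) and (\ref{commutative-exact-diagram-relation-integral-bord-classic-limit-total-integral-bord}), both already established in the proof of Theorem~\ref{crystal-structure-quantum-super-pdes}.

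Combining the two points, every $\alpha\in B_\infty^{\Omega_{m+n-2}}=\mathbf{H}_{m-1|n-1}(\hat E_\infty)$ pairs to zero with $X$, and a final appeal to Theorem~\ref{obstruction-smooth-solutions}, together with the orientability of $X_C$, yields the desired smooth solution $V\subset\hat E_k$ with $\partial V=X$.

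The only mildly subtle point is the compatibility between quantum characteristic supernumbers and the classical characteristic numbers of the classic limit; but this compatibility is already built into the identifications $B_\infty^{\widetilde{\Omega}_{m+n-2}}\cong B_\infty^{\Omega_{m+n-2}}$ established in Theorem~\ref{obstruction-smooth-solutions} together with the exact column of (\ref{Reinhart-bordism-groups-relation}), so the argument reduces to pure bookkeeping rather than genuinely new analysis. I do not anticipate any major obstacle beyond carefully tracking the passage from the quantum bordism class of $X$ to the classical bordism class of $X_C$.
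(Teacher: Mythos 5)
Your argument is essentially the paper's intended one: the corollary carries no separate proof precisely because it is meant as the direct application of Theorem \ref{obstruction-smooth-solutions} once $cry(\hat E_k)=0$ is unpacked into $\mathbf{H}_{m-1|n-1}(\hat E_\infty)=B_\infty^{\Omega_{m+n-2}}$, so that the orientability of $X_C$ and the vanishing of the relevant bordism class force all integral characteristic quantum supernumbers of $X$ to vanish, exactly as you argue. One small caution: your intermediate claim that the classic limit of the quantum bordism gives $\partial V_C=X_C$ is not automatic (the paper's Reinhart discussion shows this requires matching Stiefel--Whitney and Euler numbers), but all you actually need is $[X_C]=0\in\Omega_{m+n-2}$, which already follows from the diagonal arrow in diagram (\ref{Reinhart-bordism-groups-relation}) that you cite, so the argument stands.
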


\begin{example}{\em(Quantum extended crystal SG-Yang-Mills PDE's.)}\label{SG-Yang-Mills}
Let us introduce some fundamental geometric objects to encode
quantum supergravity. (See also our previous works on this subjects
that formulate quantum supergravity in the framework of our
geometric theory of quantum super PDE's \cite{PRA13, PRA18, PRA21,
PRA28, PRA30}.) The first geometric object to consider is a {\em
quantum Riemannian (super)manifold}, $(M,\widehat{g})$, of dimension
$m$, with respect to a quantum algebra $A$, where $\widehat{g}:M\to
Hom_Z(\dot T^2_0M;A)$ is a quantum metric. We shall assume that $M$
is locally {\em quantum (super) Minkowskian}, i.e., there is a
$Z$-isomorphism, {\em(quantum vierbein)}:
\begin{equation}\label{quantum-vierbein}
    \hat\theta(p):T_pM\cong A\otimes_{R}\mathbf{M}_C, \forall p\in M,
\end{equation}
where $T_pM$ is the tangent space, at $p\in M$, to $M$, and
$\mathbf{M}_C$ is the $m$-dimensional vector space of free vectors
of a $m$-dimensional affine Minkowsky space-time. Equivalently a
quantum verbein is a section $\hat\theta:M\to
Hom_Z(TM;E)\cong\widehat{E}\otimes_{\widehat{A}}(TM)^+$, where $E$
is the trivial fiber bundle $\bar\pi:E\equiv M\times
A\otimes_{R}\mathbf{M}_C$. Let us denote by $\underline{g}\in
S^0_2(\mathbf{M}_C)$ the hyperbolic scalar product with signature
$(+,---\cdots)$. $\underline{g}$ induces a $A$-valued scalar
product, $\underline{\hat g}$, on $A\otimes_{R}\mathbf{M}_C$, given
by $\underline{\hat g}(a\otimes u,b\otimes v)=ab\hskip
  3pt\underline{g}(u,v)\in A $. By using the canonical splitting $Hom_Z(\dot T^2_0(A\otimes_{\mathbb{R}}\mathbf{M}_C;A)\cong Hom_Z(\dot S^2_0(A\otimes_{\mathbb{R}}\mathbf{M}_C;A)\oplus
  Hom_Z(\dot \Lambda^2_0(A\otimes_{\mathbb{R}}\mathbf{M}_C;A)$, we get also the split representation $\underline{\hat g}=\underline{\hat g}_{(s)}+\underline{\hat g}_{(a)}$. More precisely one has
 \begin{equation}
 \underline{\hat g}_{(s)}(a\otimes u,b\otimes v)=[a,b]_+\underline{g}(u,v),\quad
 \underline{\hat g}_{(a)}(a\otimes u,b\otimes v)=[a,b]_-\underline{g}(u,v).
  \end{equation}
  Furthermore, if $(e_\alpha)$ is a basis in $\mathbf{M}_C$, and $(e^\beta)$ is its dual, characterized by the conditions $e_\alpha e^\beta=\delta^\beta_\alpha$, let us denote respectively by $(\widehat{1\otimes e_\alpha})$ and
  $((1\otimes e^\beta)^+)$ the induced dual bases on the spaces $\widehat{A\otimes_{\mathbb{R}}\mathbf{M}_C}$ and $(A\otimes_{\mathbb{R}}\mathbf{M}_C)^+$ respectively. Then one has the following representations
 \begin{equation}\label{basis-representation-quantum-extended-minkowsky-product}
\left\{   \begin{array}{l}
     \underline{\hat g}=\underline{\hat g}_{\alpha\beta}(1\otimes e^\alpha)^+\otimes(1\otimes e^\beta)^+,\quad
     \underline{\hat g}_{\alpha\beta}\in\mathop{\widehat{A}}\limits^{2}\\
     \underline{\hat g}_{(s)}=\underline{\hat g}_{(s)}{}_{\alpha\beta}(1\otimes e^\alpha)^+\bullet(1\otimes e^\beta)^+,\quad
     \underline{\hat g}_{(s)}{}_{\alpha\beta}\in\mathop{\widehat{A}}\limits^{2}\\
     \underline{\hat g}_{(a)}=\underline{\hat g}_{(a)}{}_{\alpha\beta}(1\otimes e^\alpha)^+\triangle(1\otimes e^\beta)^+,\quad
     \underline{\hat g}_{(a)}{}_{\alpha\beta}\in\mathop{\widehat{A}}\limits^{2}.\\
     \end{array}\right.
 \end{equation}

By means of the isomorphism $\hat\theta^{\otimes}$, we can induce on $M$ a quantum
  metric, i.e., the {\em quantum Minkowskian metric} of $M$,
$\widehat{g}=\underline{\hat g}\circ\hat\theta^{\otimes}$.
Conversely any quantum metric $\widehat{g}$ on $M$, induces on the
space $A\otimes_{R}\mathbf{M}_C$, scalar products, for any $p\in M$:
$\hat g(p)=\widehat{g}(p)\circ(\hat\theta^{\otimes}(p))^{-1}$. As a
by-product, we get that any quantum metric $\widehat{g}$ on $M$,
induces a quantum metric on the fiber bundle $\bar\pi:E\to M$, that
we call the {\em deformed quantum metrics} of $\bar\pi:E\to M$. Therefore, when we talk about locally Minkowskian quantum manifold $M$, we mean that on $M$ is defined a Minkowskian quantum metric.
Since
$Hom_Z(TM;A\otimes_{\mathbb{R}}\mathbf{M}_C)\cong\widehat{A\otimes_{\mathbb{R}}\mathbf{M}_C}\otimes_{\widehat{A}}(TM)^+$,
we can locally represent a quantum vierbein in the following form:
\begin{equation}\label{local-quantum-vierbein}
    \hat\theta=\widehat{1\otimes e_\beta}\bigotimes
    \hat\theta^\beta_\alpha\hskip
  3pt dx^\alpha,
\end{equation}
where $\widehat{1\otimes e_\beta}\in
Hom_Z(A;A\otimes_{\mathbb{R}}\mathbf{M}_C)$, is the full quantum
extension of a basis $(e_\alpha)_{0\le\alpha\le m-1}$ of
$\mathbf{M}_C$, i.e., $\widehat{1\otimes e_\beta}(a)=a\otimes
e_\beta$. Furthermore, $\hat\theta^\beta_\alpha(p)\in \widehat{A}$.
Then, if $\zeta:M\to \widehat{TM}\equiv Hom_Z(A;TM)$ is a full
quantum vector field on $M$, locally represented by $\zeta=\partial
x_\alpha \zeta^\alpha$, we get that its local representation by
means of quantum vierbein, is given by the following formula:
\begin{equation}\label{quantum-vierbein-full-quantum-vector-field}
    \hat\theta(\zeta)=\widehat{1\otimes e_\beta}\hat\theta^\beta_\alpha\zeta^\alpha,
\end{equation}
where the product is given by composition:
\begin{equation}
\xymatrix@1@C=50pt{A\ar[r]^{\zeta^\alpha}\ar@/_1pc/[rrr]_{\sum_{\alpha,\beta}=\hat\theta(\zeta)}&
A\ar[r]^{\hat\theta_\alpha^\beta}&A\ar[r]^{\widehat{1\otimes e_\beta}}&A\otimes_Z\mathbf{M}_C\\}
\end{equation}
(For abuse of notation we
can also denote $\hat\theta(\zeta)$ by $\zeta$ yet.)  Whether
$\widehat{g}=\widehat{g}_{\alpha\beta}dx^\alpha\otimes dx^\beta$, is
the quantum Minkowskian metric of $M$, then its local representation
by means of the quantum vierbein is the following:
\begin{equation}\label{quantum-vierbein-full-quantum-metric}
\left\{\begin{array}{l}
         \widehat{g}=\widehat{g}_{\alpha\omega}dx^\alpha\otimes dx^\omega\\
       \widehat{g}_{\alpha\omega}=\hat\theta^\beta_\alpha\otimes\hat\theta^\gamma_\omega\hskip 3pt\underline{g}_{\beta\gamma}=
   \underline{g}_{\beta\gamma}\hat\theta^\beta_\alpha\otimes\hat\theta^\gamma_\omega,\quad
   \widehat{g}_{\alpha\omega}(p)\in\mathop{\widehat{A}}\limits^{2},\quad \forall p\in M.\\
   \end{array}\right.
   \end{equation}
where $\hat\theta^\beta_\alpha\otimes\hat\theta^\gamma_\omega(p)$,
can be identified with
$\hat\theta^\beta_\alpha\otimes\hat\theta^\gamma_\omega(p)\in
Hom_Z(\dot T^2_0(A);\dot T^2_0(A))$. In fact, one has the following
extension $\hat\theta^\otimes$ of $\hat\theta$:
\begin{equation}\label{tensor-extension-quantum-vierbein}
  \left\{
  \begin{array}{ll}
     \hat\theta^{\otimes}& \in
   Hom_Z(T\otimes_ZTM;(A\otimes_{\mathbb{R}}\mathbf{M}_C)\otimes_Z(A\otimes_{\mathbb{R}}\mathbf{M}_C)) \\
    & \cong\widehat{(A\otimes_{\mathbb{R}}\mathbf{M}_C)\otimes_Z(A\otimes_{\mathbb{R}}\mathbf{M}_C)}\bigotimes_{\widehat{A}}
   (TM\otimes_ZTM)^+.\\
  \end{array}
\right.
\end{equation}
Locally one can write
\begin{equation}\label{local-tensor-extension-quantum-vierbein}
   \hat\theta^\otimes=\widehat{(1\otimes e_\gamma)\otimes(1\otimes
   e_\omega)}\otimes\hat\theta^\gamma_\alpha\otimes\hat\theta^\omega_\beta\hskip
  3pt
   dx^\alpha\otimes dx^\beta.
\end{equation}
In fact, we have
\begin{equation}\label{calculation-local-tensor-extension-quantum-vierbein}
\widehat{g}(\zeta,\xi) =  \widehat{g}(\widehat{1\otimes e_\beta}\hat\theta^\beta_\alpha\zeta^\alpha,
  \widehat{1\otimes e_\gamma}\hat\theta^\gamma_\omega\xi^\omega)
  =\hat\theta^\beta_\alpha\zeta^\alpha\hat\theta^\gamma_\omega\xi^\omega\hskip
  3pt \underline{g}(e_\beta,e_\gamma)
  =\hat\theta^\beta_\alpha\zeta^\alpha\hat\theta^\gamma_\omega\xi^\omega\hskip
  3pt
  \underline{g}_{\beta\gamma}.
\end{equation}
In the particular case that $(e_\beta)$ is an orthonormal basis,
then we get the following quantum Minkowskian representation for
$\widehat{g}$
\begin{equation}\label{minkowskian-representation-full-quantum-metric}
(\widehat{g}_{\alpha\omega})=\hat\theta^\beta_\alpha\otimes\hat\theta^\gamma_\omega\hskip
  3pt
  \eta_{\beta\gamma},\quad(\eta_{\beta\gamma}) = \left(
                                   \begin{array}{cccc}
                                     1& 0&\cdots&0 \\
                                     0& -1&\cdots&0 \\
                                     \cdots&\cdots &\cdots&\cdots \\
                                     0& 0&\cdots&-1\\
                                   \end{array}
                                 \right).
\end{equation}

The splitting in symmetric and skewsymmetric part of $\widehat{g}$,
i.e.,
\begin{equation}\label{splitting-full-quantum-metric}
\widehat{g}=\widehat{g}_{(s)}+\widehat{g}_{(a)}=\widehat{g}_{\alpha\beta}\hskip
  3ptdx^\alpha\bullet
dx^\beta+\widehat{g}_{\alpha\beta}\hskip
  3ptdx^\alpha\triangle dx^\beta
\end{equation}
can be written in term of quantum vierbein in the following way:
\begin{equation}\label{symmetric-skewsymmetric-vierbein}
\left\{
\begin{array}{l}
  \hat\theta^\otimes=\hat\theta^{\odot}+ \hat\theta^{\wedge}\\
  \hat\theta^{\odot}=\widehat{(1\otimes e_\gamma)\otimes(1\otimes
   e_\omega)}\otimes\hat\theta^\gamma_\alpha\otimes\hat\theta^\omega_\beta\hskip
  3pt
   dx^\alpha\bullet dx^\beta\\
\hat\theta^{\wedge}=\widehat{(1\otimes e_\gamma)\otimes(1\otimes
   e_\omega)}\otimes\hat\theta^\gamma_\alpha\otimes\hat\theta^\omega_\beta\hskip
  3pt
   dx^\alpha\triangle dx^\beta\\
\widehat{g}_{(s)}(\zeta,\xi)=[\hat\theta^\beta_\alpha\zeta^\alpha,\hat\theta^\gamma_\omega\xi^\omega]_+\hskip
  3pt
  \underline{g}_{\beta\gamma},\Rightarrow \widehat{g}_{(s)}{}_{\alpha\omega}=
  \hat\theta^\beta_\alpha\bullet\hat\theta^\gamma_\omega\hskip 3pt\underline{g}_{\beta\gamma}\\
  \widehat{g}_{(a)}(\zeta,\xi)=[\hat\theta^\beta_\alpha\zeta^\alpha,\hat\theta^\gamma_\omega\xi^\omega]_-\hskip
  3pt
  \underline{g}_{\beta\gamma},\Rightarrow
  \widehat{g}_{(a)}{}_{\alpha\omega}=\hat\theta^\beta_\alpha\triangle\hat\theta^\gamma_\omega\hskip
  3pt\underline{g}_{\beta\gamma}.\\
\end{array}
\right.
\end{equation}

Conversely, the local expression of the quantum deformed metrics on
$\bar\pi:E\to M$, induced by a quantum metrics $\widehat{g}$ on $M$,
is given by the following formulas:

\begin{equation}\label{symmetric-skewsymmetric-vierbein}
\left\{
\begin{array}{l}
  (\hat\theta^\otimes)^{-1}=(\hat\theta^{\odot})^{-1}+ (\hat\theta^{\wedge})^{-1}\\
  (\hat\theta^{\odot})^{-1}=\partial x_\alpha\bullet\partial x_\beta\otimes\hat\theta_\gamma^\alpha\otimes
  \hat\theta_\omega^\beta\hskip 3pt
   (1\otimes e^\gamma)^+\bullet(1\otimes e^\omega)^+\\
(\hat\theta^{\wedge})^{-1}=\partial x_\alpha\triangle\partial
x_\beta\otimes\hat\theta_\gamma^\alpha\otimes
  \hat\theta_\omega^\beta\hskip 3pt
   (1\otimes e^\gamma)^+\triangle(1\otimes e^\omega)^+\\
\hat{g}(\zeta^\alpha\otimes e_\alpha,\xi^\beta\otimes
e_\beta)=\widehat{g}_{\gamma\omega}\hat\theta^\gamma_\alpha\zeta^\alpha\otimes\hat\theta^\omega_\beta\xi^\beta,\Rightarrow
\hat{g}_{\alpha\beta}=\widehat{g}_{\gamma\omega}\hat\theta^\gamma_\alpha\otimes\hat\theta^\omega_\beta\\
\hat{g}_{(s)}(\zeta^\alpha\otimes e_\alpha,\xi^\beta\otimes
e_\beta)=\widehat{g}_{(s)}{}_{\gamma\omega}[\hat\theta^\gamma_\alpha\zeta^\alpha,\hat\theta^\omega_\beta\xi^\beta]_+,\Rightarrow
\hat{g}_{(s)}{}_{\alpha\beta}=\widehat{g}_{(s)}{}_{\gamma\omega}\hat\theta^\gamma_\alpha\bullet\hat\theta^\omega_\beta\\
\hat{g}_{(a)}(\zeta^\alpha\otimes e_\alpha,\xi^\beta\otimes
e_\beta)=\widehat{g}_{(s)}{}_{\gamma\omega}[\hat\theta^\gamma_\alpha\zeta^\alpha,\hat\theta^\omega_\beta\xi^\beta]_-,\Rightarrow.
\hat{g}_{(a)}{}_{\alpha\beta}=\widehat{g}_{(a)}{}_{\gamma\omega}\hat\theta^\gamma_\alpha\triangle\hat\theta^\omega_\beta.\\
\end{array}
\right.
\end{equation}
In the particular case that $\widehat{g}$ is Minkowskian, then we
can use for $\widehat{g}_{\gamma\omega}$,
$\widehat{g}_{(s)}{}_{\gamma\omega}$ and
$\widehat{g}_{(a)}{}_{\gamma\omega}$ the corresponding expressions
in {\em(\ref{quantum-vierbein-full-quantum-metric})}, and by using
the property that
$\hat\theta^\delta_\gamma\hat\theta^\gamma_\alpha=\delta^\delta_\alpha$,
we get $\hat{g}_{\gamma\omega}=\underline{\hat{g}}_{\gamma\omega}$,
$\hat{g}_{(s)}{}_{\gamma\omega}=\underline{\hat{g}}_{(s)}{}_{\gamma\omega}$
and
$\hat{g}_{(a)}{}_{\gamma\omega}=\underline{\hat{g}}_{(a)}{}_{\gamma\omega}$.
The {\em controvariant full quantum metric} $\overline{\widehat{g}}$
of $\widehat{g}:M\to Hom_Z(\dot T^2_0M;A)$ is a section
$\overline{\widehat{g}}:M\to Hom_Z(A;\dot T^2_0M)$ such that the
following conditions are satisfied:
\begin{equation}\label{local-coordinates-representation-full-quantum-metric}
\left\{
\begin{array}{l}
 \overline{\widehat{g}}=\partial x_\alpha\otimes\partial
x_\beta\hat{g}^{\alpha\beta},\quad
\widehat{g}=\hat{g}_{\gamma\omega}dx^\gamma\otimes dx^\omega,\quad
 \hat{g}^{\alpha\beta}(p)\in Hom_Z(A\otimes_Z A;A),\quad  \hat{g}_{\alpha\beta}(p)\in \mathop{\widehat{A}}\limits^{2}\\
\hat{g}_{\gamma\omega}(p)\hat{g}^{\gamma\beta}(p)=\delta^\beta_\omega \in\mathbb{R}\subset\widehat{A},\quad
\hat{g}^{\gamma\beta}(p)\hat{g}_{\gamma\omega}(p)=\delta^\beta_\omega \in\mathbb{R}\subset Hom_Z(A\otimes_Z A;A\otimes_Z A).\\
\end{array}
\right.
\end{equation}
The products in {\em(\ref{local-coordinates-representation-full-quantum-metric})} are meant by composition:
\begin{equation}\label{composition-local-coordinates-representation-full-quantum-metric}
\xymatrix@1@C=50pt{A\ar[r]^{\hat g{}^{\alpha\beta}(p)}\ar@/_1pc/[rr]_{\delta^\beta_\gamma}&A\otimes_ZA\ar[r]^{\hat g{}_{\alpha\gamma}(p)}&A\\}
\quad
\xymatrix@1@C=50pt{A\otimes_ZA\ar[r]^{\hat g{}_{\alpha\gamma}(p)}\ar@/_1pc/[rr]_{\delta^\beta_\gamma}&A\ar[r]^{\hat g{}^{\alpha\beta}(p)}&A\otimes_ZA.\\}
\end{equation}
In the following commutative diagram it is shown the pairing working between the fiber bundles $(\dot T^2_0M)^+$ and $\widehat{\dot T^2_0M}$ over $M$.
\begin{equation}\label{commutative-diagram-pairing}
\xymatrix{\widehat{\dot T^2_0M}\times_M(\dot T^2_0M)^+\ar[d]\ar[r]^{<,>}&\widehat{A}\\
Hom_Z(\dot T^2_0M;\dot T^2_0M)\cong \widehat{\dot T^2_0M}\bigotimes_{\widehat{A}}(\dot T^2_0M)^+\ar[r]_(0.77){\TR}&M\times\widehat{A}\ar[u]^{pr_2}\\}
\end{equation}
In particular, one has:
\begin{equation}\label{trace-pairing}
\frac{1}{s}<\overline{\widehat{g}},\widehat{g}>=\frac{1}{s}\hat g{}^{\alpha\beta}\hat g{}_{\alpha\beta}
=\frac{1}{s}\delta^\beta_\beta 1_{\widehat{A}}=1_{\widehat{A}},
\quad s=\left\{\begin{array}{l}
                 m,\hskip 3pt  \dim_AM=m\\
                 m+n,\hskip 3pt  \dim_AM=m|n.\\
               \end{array}\right.
\end{equation}

It is direct to verify that
$\widehat{g}^{\alpha\beta}=\hat\theta^\alpha_\omega\otimes\hat\theta^\beta_\epsilon\underline{\hat{g}}^{\omega\epsilon}$
is the controvariant expression of the full quantum metric
$\widehat{g}_{\alpha\beta}=\hat\theta_\alpha^\gamma\otimes\hat\theta^\delta_\beta\underline{\hat{g}}_{\gamma\delta}$,
when $\underline{\hat{g}}^{\omega\epsilon}$ is the controvariant one
of $\underline{\hat{g}}_{\gamma\delta}$. In other words if
$\underline{\hat{g}}_{\omega\delta}\underline{\hat{g}}^{\omega\epsilon}=\delta^\epsilon_\delta$,
then
$\widehat{g}_{\alpha\gamma}\widehat{g}^{\alpha\beta}=\delta^\beta_\gamma$.
This means that the full quantum metric $\underline{\hat{g}}$,
induced on $A\otimes_{\mathbb{R}}\mathbf{M}_C$ by $\underline{g}$,
is not degenerate, i.e. one has the following short exact sequence:
\begin{equation}\label{short-sequence-non-degeneration}
\xymatrix{0\ar[r]&A\otimes_{\mathbb{R}}\mathbf{M}_C\ar[r]^{{}'\underline{\hat
g}}&(A\otimes_{\mathbb{R}})^+.\\}
\end{equation}
In fact, one can see that $\ker({}'\underline{\hat{g}})=\{0\}$.
Really, ${}'\underline{\hat{g}}(a\otimes v)(b\otimes
u)=ab\underline{g}(v,u)=0$, for all $b\in A$ and $u\in \mathbf{M}_C$
iff $a=0$ or $v=0$. In fact we can take $b=1$ and $u$ any vector of
$\mathbf{M}_C$. So, since $\underline{g}$ is not degenerate, it
follows that cannot be  $\underline{g}(v,u)=0$, for a non zero $v$,
and $\forall u\in \mathbf{M}_C$. The nondegeneration of
$\underline{\hat{g}}$ induces also the following isomorphism
$\widehat{A\otimes_{\mathbb{R}}\mathbf{M}_C}\cong(A\otimes_{\mathbb{R}}\mathbf{M}_C)^+$.

We define {\em quantum supergravity Yang-Mills PDE}, {\em(quantum SG-Yang-Mills PDE)}, a quantum super
Yang-Mills PDE where the quantum super Lie algebra $\mathfrak{g}$ in
the configuration bundle $\pi:W\equiv Hom_Z(TM;\mathfrak{g})\to M$
is a quantum superextension of the Poincar\'e Lie algebra and admits
the following splitting:
\begin{equation}\label{split-quantum-algebra}
\mathfrak{g}=\mathfrak{g}_{\circledR}+\mathfrak{g}_{\copyright}+\mathfrak{g}_{\maltese}
\end{equation}
where $\mathfrak{g}_{\circledR}=A\otimes_{\mathbb{R}}\mathbf{M}_C$, (resp. $\mathfrak{g}_{\copyright}$ is the quantum superextension of the Lorentz part of the Poincar\'e algebra). Here $A$ is a quantum (super)algebra on which is modeled the quantum (super)manifold $M$, and $\mathbf{M}_C$ is the $4$-dimensional Minkowsky vector space. Taking
into account the canonical splitting:
\begin{equation}
Hom_Z(TM;\mathfrak{g})\cong Hom_Z(TM;\mathfrak{g}_{\circledR})\times
Hom_Z(TM;\mathfrak{g}_{\copyright})\times
Hom_Z(TM;\mathfrak{g}_{\maltese})
\end{equation}
we get that the fundamental field $\hat\mu:M\to W$, in a quantum
supergravity Yang-Mills PDE, admits the following canonical splitting:
\begin{equation}
\hat \mu=\hat\mu_{\circledR}+\hat\mu_{\copyright}+\hat\mu_{\maltese}.
\end{equation}
We say that $\hat\mu$ is {\em nondegenerate} if
$\hat\mu_{\circledR}$ identifies, for any $p\in M$, an isomorphism
$\hat\mu_{\circledR}(p):T_pM\cong
A\otimes_{\mathbb{R}}\mathbf{M}_C$, hence $\hat\mu_{\circledR}$ can
be identified with a quantum verbein on $M$:
$\hat\mu_{\circledR}\equiv\hat\theta$. Then we define
$\hat\mu_{\circledR}$, (resp. $\hat\mu_{\copyright}$, resp.
$\hat\mu_{\maltese}$), the {\em vierbein-component}, (resp. {\em
Lorentz-component}, resp. {\em deviatory-component}), of $\hat\mu$.
This property is represented, in local quantum coordinates, by the
fact that in the following formula
\begin{equation}
    \hat\mu=(\hat\mu^K_A dx^A)=(\mu_{\circledR}{}^K_Adx^A+\hat\mu_{\copyright}{}^K_Adx^A+\hat\mu_{\maltese}{}^K_Adx^A)
\end{equation}
one has $(\mu_{\circledR}{}^K_A(p))\in GL(\widehat{A};4)$.

An example that we have just considered in some previous works
\cite{PRA15, PRA16, PRA23, PRA30, PRA32}, is when the quantum Lie
superalgebra $\widehat{\frak g}$ is identified by means of the
following infinitesimal generators: $\{Z_K\}_{1\le K\le
19}\equiv\{J_{\alpha\beta},P_\alpha,\overline{Z},Q_{\beta
i}\}_{0\le\alpha,\beta\le 3;1\le a\le 2}$, such that
$J_{\alpha\beta}=-J_{\beta\alpha},P_\alpha,\overline{Z}\in
Hom_Z(A_0;{\frak g})$, $Q_{\beta i}\in Hom_Z(A_1;{\frak g})$, and
such that nonzero ${\mathbb Z}_2$-graded brackets are reported in
Tab.1.
$$\begin{tabular}{|l|} \hline \multicolumn {1}{|c|}{\bsmall Tab.1 - {\boldmath$\scriptstyle N=2$} Super Poincar\'e algebra.}\\
\hline\hline $\scriptstyle [J_{\alpha\beta},J_{\gamma\delta}]=\eta
_{\beta\gamma} J_{\alpha\delta}+\eta
_{\alpha\delta}J_{\beta\gamma}-\eta
_{\alpha\gamma}J_{\beta\delta}-\eta_{\beta\delta}J_{\alpha\gamma}$\\
 $\scriptstyle [P_\alpha,P_\beta]=0,\quad
[J_{\alpha\beta},P_\gamma]=\eta _{\beta\gamma}P_\beta-\eta
_{\alpha\gamma}P_\beta$\\  $\scriptstyle [J_{\alpha\beta},Q_{\gamma
i}]=(\sigma_{\alpha\beta})^{\mu j}_\gamma Q_{\mu j},\quad [Q_{\beta
i},Q_{\mu j}]=(C\gamma^\alpha)_{\beta\mu}\delta_{ij}P_\alpha+C
_{\beta\mu}\epsilon_{ij}\overline{Z}$\\ \hline\end{tabular}$$

Here $ C_{\alpha\beta}$ is the antisymmetric charge conjugation
matrix, $ \sigma_{\beta\mu}={1\over 4}[\gamma_\beta,\gamma_\mu]$,
with $\gamma^\mu$ the Dirac matrices. $\overline{Z}$ commutes with
all the other ones. Then,
$\hat\mu_{\circledR}{}^{\alpha\beta}_\gamma=\frac{1}{2}\hat\omega^{\alpha\beta}_\gamma$,
$\hat\mu_{\copyright}{}^\mu_\gamma=\hat\theta^\mu_\gamma$,
$\hat\mu_{\maltese}=(\hat A_\gamma,\hat\psi^{\alpha j}_\gamma )$,
where $\hat\omega^{\alpha\beta}_\gamma$ are called {quantum
Levi-Civita connection coefficients}, $\hat\theta^\mu_\gamma$ are
the {\em quantum vierbein components}, $\hat A_\gamma$ are the {\em
quantum electromagnetic field components} and $\hat\psi^{aj}_\gamma$
are the {\em quantum spin ${3\over 2}$ field components}. The
curvature, corresponding to $\hat\mu$, can be locally written in the
form: $ \hat R=Z_K\otimes \hat R^K_{\alpha\beta}dx^\alpha\triangle
dx^\beta$, with $\hat R^K_{\beta\alpha}=(\partial x_{\beta}
\mu^K_{\alpha})+C^K_{IJ}[\mu^I_{\beta},\mu^J_{\alpha}]_+$. The local
expression of the {\em dynamic equation} $ \hat E_{2k}\subset J\hat
D^{2k}(W)$ is resumed in Tab.2, for some quantum Lagrangian $L:J\hat
D^{k}(W)\to\widehat{A}$ of order $k$.
$$\begin{tabular}{|c|l|} \hline \multicolumn {2}{|c|}{\bsmall Tab.2 - Dynamic Equation {\boldmath$\scriptstyle
\hat E_{2k}\subset J\hat D^{2k}(W)$} and Bianchi identity.}\\
\hline\hline {\sevenSl Fields equations}&
                   $\scriptstyle  (\partial\omega^\gamma_{ab}.L)-
                  \partial_\mu(\partial
                  \omega^{\gamma\mu}_{ab}.L)=0
                  \hskip 3pt\hbox{  ({\sevenSl curvature equation})}$
                  \\  &$\scriptstyle  (\partial\theta^\gamma_\alpha.L)
                  -\partial_\mu(\partial\theta^{\gamma\mu}_\alpha.L)=0
                  \hskip 3pt\hbox{  ({\sevenSl torsion equation})}$
                  \\  $\scriptstyle (\hat E_2)$&
                  $\scriptstyle  (\partial\psi^\gamma_{\beta i}.L)
                  -\partial_\mu(\partial\psi^{\gamma\mu}_{\beta i}.L)=0
                  \hskip 3pt\hbox{  ({\sevenSl gravitino equation})}$
                  \\  &$\scriptstyle  (\partial A^\gamma.L)
                   -\partial_\mu(\partial A^{\gamma\mu}.L)=0
                  \hskip 3pt\hbox{  ({\sevenSl Maxwell's equation})}$
                  \\ \hline{\sevenSl Bianchi identity}&
                            $\scriptstyle (\partial x_{\gamma }.R^{ab}_{\beta\alpha })
                            +2[\omega ^a_{e\gamma},R^{eb}_{\beta\alpha}]_+=0$
                            \\  &$\scriptstyle (\partial x_{\gamma }.
                            R^{\alpha }_{\beta \omega })+
                            [\omega ^{\alpha b}_{\gamma},R_{\beta\omega b}]_++
                            (C\gamma ^{\alpha})_{\delta\mu}[\psi^\delta_{j\gamma},
                            \rho _{\beta\omega}^{\mu j}]_+=0$
                            \\  $\scriptstyle(B)$&$\scriptstyle (\partial
x_{\gamma } .\rho ^{\beta i}_{\omega \alpha })
                           +(\sigma_{ab})^{\beta i}_{\delta j}
                            [\omega ^{ab}_{\gamma },
                            \rho _{\omega\alpha}^{\delta j}]_+=0$
                            \\  &$\scriptstyle (\partial x_{\gamma }F_{\beta \alpha
})+
                            C_{\delta\mu}
                            \epsilon_{ij}[\psi _{\gamma}^{\delta i},
                            \rho ^{\mu j}_{\beta\alpha }]_+=0$
                            \\ \hline{\sevenSl Fields}&
                   $\scriptstyle  R^{ab}_{\mu \nu }=(\partial x_{\mu} .\omega ^{ab}_{\nu} )
                   +2[\omega ^a _{e\mu},
                   \omega ^{eb}_{\nu}]_+\hskip 3pt\hbox{({\sevenSl curvature})}$
                   \\  &$\scriptstyle  R^{\alpha }_{\mu \nu }=
                   (\partial x_{\mu }.{\theta} ^{\alpha }_{\nu} )+
                   [{\omega} ^{\alpha }_{\beta\mu},{\theta} ^{\beta}_{\nu}]_+
                   +(C\gamma^\alpha)_{\beta\delta}
                  [ \psi^\beta_{j\mu},\psi^{\delta j}_{\nu}]_+
                   \hskip 3pt\hbox{ ({\sevenSl torsion})}$
                   \\  &$\scriptstyle  {\rho} ^{\beta i}_{\mu \nu }
                   =(\partial x_{\mu} .{\psi }^{\beta i}_{\nu })
                   +(\sigma_{ab})^{\beta i}_{\gamma j}[\omega^{ab}_{\mu},
                   \psi^{\gamma j}_{\nu}]_+\hskip 3pt\hbox{({\sevenSl gravitino})}$
                   \\  &$\scriptstyle  F_{\mu \nu }=(\partial x_{\mu}
.A_{\nu} )+
                   C_{\beta\gamma}\epsilon_{ij}[\psi^{\beta i}_{\mu},\psi^{\gamma j}_{\nu}]_+
                   \hskip 3pt\hbox{({\sevenSl electromagnetic field})}$
                   \\ \hline\end{tabular}$$

So in a quantum SG-Yang-Mills PDE, the quantum Riemannian
metric $\widehat{g}$ is not a fundamental field, but a secondary
field, obtained by means of the quantum verbein
$\hat\theta=\hat\mu_{\circledR}$, that, instead is a fundamental
dynamic field. Of course since there is a relation one-to-one
between quantum verbein and quantum metric, on a locally Minkowskian
quantum (super)manifold, one can choice also quantum metric as a
fundamental field, instead of the quantum verbein. However, in a
quantum SG-Yang-Mills PDE it is more natural to adopt
quantum verbein as indipendent field, since it is just enclosed in
the fundamental field $\hat\mu$. Furthermore, may be useful to
emphasize that the so-called quantum Levi-Civita connection
coefficients $\hat\omega^{\alpha\beta}_\gamma$ are not, in general,
metric coefficients, i.e., do not necessitate to be uniquely
expressed by means of the quantum metric $\widehat{g}$. The name
''Levi-Civita connection coefficients'' is reserved since under
suitable dynamic conditions they can be uniquely identified by the
quantum metric, similarly to what happens in the commutative
differential geometry. However, in general, such property is
dynamically relaxed.

By assuming the following first order Lagrangian function: $L:J\hat
D(W)\to\widehat{A}$, $L\circ Ds=\frac{1}{2}\bar
R^K_{\alpha\beta}\bar R_K^{\alpha\beta}$, $\forall s\in
Q^\infty_w(W)$,\footnote{The rising and lowering of indexes is
obtained by means of the fullquantum metrics $\widehat{g}$ on $M$
and $\underline{g}$ on ${\frak g}$ respectively.} the local
expression of $\widehat{(YM)}$ results given by the equations
reported in Tab.3. Note that the quantum super Yang-Mills equation
is now
$(\partial\bar\mu^A_K.L)-(\partial_B(\partial\bar\mu^{AB}_K.L))=0$.
Furthermore, it results
${(\partial\bar\mu^A_K.L)=[\widehat{C}^H_{KR}\bar\mu^R_C,\bar
R^{[AC]}_H]_+}$ and $(\partial\bar\mu^{AB}_K.L)=\bar R^{BA}_K$.

$$\begin{tabular}{|l|c|} \hline
\multicolumn {2}{|c|}{\bsmall Tab.3 - Local expression of
{\boldmath$\scriptstyle \widehat{(YM)}\subset J\hat
D^2(W)$} and Bianchi identity {\boldmath$\scriptstyle(B)\subset J\hat D^2(W)$}.}\\
\hline \hline $\scriptstyle\hbox{\rsmall(Field equations)}\hskip 2pt
E^A_{K}\equiv-(\partial_{B}.\bar R^{BA}_K)+[\widehat
C^H_{KR}\bar\mu^R_{C},\bar
R^{[AC]}_H]_+=0$&$\scriptstyle \widehat{(YM)}$\\
\hline $\scriptstyle\hbox{\rsmall(Fields)}\hskip 2pt \bar
F^K_{A_1A_2}\equiv \bar R^K_{A_1A_2}-\left[(\partial X_{
A_1}.\bar\mu^K_{A_2})+\frac{1}{2}\widehat{C}{}^K_{IJ}[\bar\mu^I_{A_1},\bar\mu^J_{A_2}]_+\right]=0$&\\
 $\scriptstyle\hbox{\rsmall(Bianchi
identities)}\hskip 2pt B^K_{HA_1A_2}\equiv(\partial X_{H}.\bar
R^K_{A_1A_2})+\frac{1}{2} \widehat{C}{}^K_{IJ}[\bar\mu^I_{H},\bar
R^J_{A_1A_2}]_+=0$&$\scriptstyle (B)$\\
\hline \multicolumn {2}{l}{$\scriptstyle
F^K_{A_1A_2}:\Omega_1\subset J\hat
D(W)\to\mathop{\widehat{A}}\limits^2;\quad
B^K_{HA_1A_2}:\Omega_2\subset J\hat
D^2(W)\to\mathop{\widehat{A}}\limits^3;\quad E^A_{K}:\Omega_2\subset
J\hat D^2(W)\to\mathop{\widehat{A}}\limits^{3}.$}\\ \end{tabular}$$

In Refs.\cite{PRA13, PRA21} it is proved that $\widehat{(YM)}\subset
J\hat D^2(W)$ is formally quantum superintegrable and also
completely quantum superintegrable. That proof works well also in
this situation, since it is of local nature, and remains valid also
for quantum supermanifolds that are only locally quantum
super-Minkowskian ones. Then, by using Theorem \ref{main4} we get

$\Omega_{3|3,w}^{\widehat{(YM)}}\cong\Omega_{3|3,s}^{\widehat{(YM)}}=A_0\otimes_{\mathbb{K}}H_3(W;\mathbb{K})\bigoplus
A_1\otimes_{\mathbb{K}}H_3(W;\mathbb{K})$. Since the fiber of $W$ is
contractible, we have
$\Omega_{3|3,w}^{\widehat{(YM)}}\cong\Omega_{3|3,s}^{\widehat{(YM)}}=A_0\otimes_{\mathbb{K}}H_3(M;\mathbb{K})\bigoplus
A_1\otimes_{\mathbb{K}}H_3(M;\mathbb{K})$. Thus, under the condition
that $H_3(M;\mathbb{K})=0$, one has
$\Omega_{3|3,w}^{\widehat{(YM)}}\cong\Omega_{3|3,s}^{\widehat{(YM)}}=0$,
hence $\widehat{(YM)}$ becomes a quantum extended crystal super PDE.
This is surely the case when $M$ is globally quantum super
Minkowskian. (See Refs.\cite{PRA14, PRA18, PRA21, PRA28, PRA30}.) In
such a case one has
$\Omega_{3|3}^{\widehat{(YM)}}=K_{3|3}\widehat{(YM)}$, where
\begin{equation}
K_{3|3}\widehat{(YM)}\equiv\left\{[N]_{\overline{\widehat{(YM)}}}\in\Omega_{3|3}\widehat{(YM)}
\left|\begin{array}{l}
{N=\partial V,\hskip 2pt\hbox{\rm for some (singular)}}\\
\hbox{\rm $(4|4)$-dimensional quantum}\\
\hbox{\rm supermanifold $V\subset W$ }\\
\end{array}\right.\right\}.
\end{equation}

So $\widehat{(YM)}$ is not a quantum $0$-crystal super PDE. However,
if we consider admissible only integral boundary manifolds, with
orientable classic limit, and with zero characteristic quantum
supernumbers, ({\em full admissibility hypothesis}), one has:
$\Omega_{3|3}^{\widehat{(YM)}}=0$, and $\widehat{(YM)}$ becomes a
quantum $0$-crystal super PDE. Hence we get existence of global
$Q^\infty_w$ solutions for any boundary condition of class
$Q^\infty_w$.

With respect to the commutative exact diagram in
{\em(\ref{Reinhart-bordism-groups-relation})} we get the following
exact commutative diagram

\begin{equation}\label{Yang-Mills-Reinhart-bordism-groups-relation}
\xymatrix{0\ar[r]&K^{\widehat{(YM)}}_{3|3;2}\ar[r]&\Omega_{3|3}^{\widehat{(YM)}}\ar[r]&
\mathop{\Omega}\limits_c{}_{6}^{\widehat{(YM)}}\ar[d]\ar[r]\ar[dr]&0&\\
&0\ar[r]&K^\uparrow_{6}\ar[r]&\Omega^\uparrow_{6}\ar[r]&
\Omega_{6}\ar[r]& 0\\}
\end{equation}
Taking into account the result by Thom on the unoriented cobordism
groups \cite{THO1}, we can calculate
$\Omega_6\cong\mathbb{Z}_2\bigoplus\mathbb{Z}_2\bigoplus\mathbb{Z}_2$.
Then, we can represent $\Omega_6$ as a subgroup of a $3$-dimensional
crystallographic group type $[G(3)]$. In fact, we can consider the
amalgamated subgroup $D_2\times\mathbb{Z}_2\star_{D_2}D_4$, and
monomorphism $\Omega_6\to D_2\times\mathbb{Z}_2\star_{D_2}D_4$,
given by $(a,b,c)\mapsto(a,b,b,c)$. Alternatively we can consider
also $\Omega_6\to D_4\star_{D_2}D_4$.  (See Appendix C in
\cite{PRA25} for amalgamated subgroups of $[G(3)]$.) In any case the
crystallographic dimension of $\widehat{(YM)}$ is $3$ and the
crystallographic space group type are $D_{2d}$ or $D_{4h}$ belonging
to the tetragonal syngony. (See Tab.4 in \cite{PRA25} and, for
further informations, \cite{HAH}.)

Finally, the evaluation of $\widehat{(YM)}$ on a macroscopic shell
$i(M_C)\subset M$ is given by the equations reported in Tab.2.

$$\begin{tabular}{|l|c|} \hline
\multicolumn {2}{|c|}{\bsmall Tab.4 - Local expression of
{\boldmath$\scriptstyle \widehat{(YM)}[i]\subset J\hat D^2(i^*W)$} and Bianchi idenity {\boldmath$\scriptstyle  (B)[i]\subset
J\hat D^2(i^*W)$}.}\\
\hline\hline $\scriptstyle \hbox{\rsmall(Field equations)}\hskip 2pt
(\partial_{\alpha}.\tilde R^{K\alpha\beta})+[\widehat
C^K_{IJ}\tilde\mu^I_{\alpha},\tilde R^{J\alpha\beta}]_+
=0$&$\scriptstyle \widehat{(YM)}[i]$ \\
\hline $\scriptstyle \hbox{\rsmall(Fields)}\hskip 2pt \bar
R^K_{\alpha_1\alpha_2}=(\partial
\xi_{[\alpha_1}.\tilde\mu^K_{\alpha_2]})+\frac{1}{2}\widehat{C}{}^K_{IJ}\tilde\mu^I_{[\alpha_2}\tilde\mu^J_{\alpha_1]}
$&{}\\
$\scriptstyle \hbox{\rsmall(Bianchi identities)}\hskip 2pt (\partial
\xi_{[\gamma}.\tilde R^K_{\alpha_1\alpha_2]})+\frac{1}{2}
\widehat{C}{}^K_{IJ}\tilde\mu^I_{[\gamma}\tilde
R^J_{\alpha_1\alpha_2]}=0$&$\scriptstyle (B)[i]$\\
\hline
\end{tabular}$$

\vskip 0.5cm This equation is also formally quantum superintegrable
and completely quantum superintegrable. Furthermore, the
$3$-dimensional integral bordism group of $\widehat{(YM)}[i]$ and
its infinity prolongation $\widehat{(YM)}[i]_ +\infty$ are trivial,
under the full admissibility hypothesis:
$\Omega_3^{\widehat{(YM)}[i]}\cong\Omega_3^{\widehat{(YM)}[i]_
+\infty}\cong 0$. So equation $\widehat{(YM)}[i]\subset J\hat
D^2(i^*W)$ becomes a quantum $0$-crystal super PDE and it admits global
(smooth) solutions for any fixed time-like $3$-dimensional (smooth)
boundary conditions.
\end{example}

\section{\bf STABILITY IN QUANTUM SUPER PDE's}
\vskip 0.5cm

In this section we shall consider the stability of quantum super
PDE's in the framework of the geometric theory of quantum super
PDE's. We will follow the line just drawn in some our previous
papers on this subject for commutative PDE's, where we have
interpreted stability of PDE's on the ground of their integral
bordism groups and related the quantum bordism of PDE's to Ulam
stability too.

Let us first revise some definitions and results about stability of
mappings and their relations with singularities of mappings,
adapting them to this new category of more complex mathematical noncommutative objects.

\begin{definition}
Let $X$, (resp. $Y$), be a quantum supermanifold of dimension $m|n$,
(resp. $r|s$), with respect to a quantum superalgebra $A=A_0\oplus
A_1$, (resp. $B=B_0\oplus B_1$). We shall assume that the centre
$Z=Z(A)$ of $A$, acts on $B$ that becomes a $Z$-module.\footnote{In
the following, whether it is not differently specified, $X$ and $Y$
are such quantum supermanifolds.} Let $f\in Q_w^\infty(X,Y)$. Then
$f$ is {\em stable} if there is a neighborhood $W_f\subset
Q_w^\infty(X,Y)$ of $f$, in the natural Whitney-type topology of
$Q_w^\infty(X,Y)$, such that every $W_f$ is contained in the orbit
of $f$, via the action of the group $\hat Diff(X)\times \hat
Diff(Y)$.\footnote{Here $\hat Diff(X)$ denotes the group of quantum
diffeomorphisms of a quantum super manifold $X$.} This is equivalent
to say that for any $f'\in W_f$ there exist quantum diffeomorphisms
$g:X\to X$ and $h:Y\to Y$ such that $h\circ f= f'\circ g$.
Furthermore, $f$ is called {\em infinitesimally stable} if there
exist a map $\zeta:X\to TY$, such that $\pi_Y\circ\zeta=f$, where
$\pi_Y:TY\to Y$ is the canonical map, and integrable vector fields
$\nu:Y\to TY$, $\xi:X\to TX$, such that $\zeta=T(f)\circ\xi+\nu\circ
f$. Thus the following diagram is commutative.
\begin{equation}\label{infinitesimal-stability}
    \xymatrix{TX\ar@/^1pc/[d]^{\pi_X}\ar[r]^(.4){T(f)}&TY\bigoplus TY\ar@/^1pc/[d]^{\pi_Y}\ar[r]^{+}&TY\ar[d]^{\pi_Y}\\
    X\ar[u]^{\xi}\ar[urr]^(.3){\zeta}\ar[r]_{f}&Y\ar[u]^(.3){\nu}\ar@{=}[r]&Y\\}
\end{equation}
\end{definition}

\begin{theorem}
Let $X$ be a compact quantum supermanifold and $f:X\to Y$ be quantum smooth. Then $f$ is
stable iff $f$ is infinitesimally stable. Furthermore, if $f$ is a
proper mapping, then does not necessitate assume that $X$ is
compact.\footnote{Recall that a map $f:X\to Y$ between topological
spaces is a {\em proper map} if for every compact subset $K\subset
Y$, $f^{-1}(K)$ is a compact subset of $X$.}
\end{theorem}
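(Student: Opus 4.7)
The plan is to adapt Mather's classical stability theorem to the category $\mathfrak{Q}_S$ of quantum supermanifolds, handling the two implications separately. For the forward direction (stability $\Rightarrow$ infinitesimal stability), I would take an arbitrary quantum smooth map $\zeta:X\to TY$ with $\pi_Y\circ\zeta=f$, realize it as $\dot f_0$ for a smooth one-parameter family $f_t\in Q_w^\infty(X,Y)$ with $f_0=f$, and then invoke stability to obtain, for small $t$, quantum diffeomorphisms $g_t:X\to X$ and $h_t:Y\to Y$ with $g_0=\mathrm{id}_X$, $h_0=\mathrm{id}_Y$, such that $h_t\circ f=f_t\circ g_t$. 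Differentiating this identity at $t=0$ yields $\zeta=T(f)\circ\xi+\nu\circ f$ with $\xi=-\dot g_0:X\to TX$ and $\nu=\dot h_0:Y\to TY$, both integrable by construction as initial velocities of one-parameter families of quantum diffeomorphisms. This is precisely diagram \eqref{infinitesimal-stability}.

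For the reverse direction (infinitesimal stability $\Rightarrow$ stability), I would use the standard homotopy (path-lifting) method, transplanted to $\mathfrak{Q}_S$. Given $f'$ in a sufficiently small Whitney-type neighborhood $W_f$, I would connect $f$ to $f'$ by a smooth path $f_t\in Q_w^\infty(X,Y)$, $t\in[0,1]$. At every $t$, the derivative $\dot f_t:X\to TY$ satisfies $\pi_Y\circ\dot f_t=f_t$, so infinitesimal stability of each $f_t$ produces a decomposition
\begin{equation*}
\dot f_t=T(f_t)\circ\xi_t+\nu_t\circ f_t
\end{equation*}
with time-dependent integrable quantum vector fields $\xi_t:X\to\widehat{TX}$ and $\nu_t:Y\to\widehat{TY}$ depending smoothly on $t$. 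Integrating these time-dependent vector fields yields flows $g_t$ on $X$ and $h_t$ on $Y$; a direct check, analogous to the classical Moser trick, verifies that $f_t=h_t\circ f\circ g_t^{-1}$, so $f'=f_1$ lies in the $\hat Diff(X)\times\hat Diff(Y)$-orbit of $f$. Compactness of $X$ (or, in the second statement, properness of $f$ together with a cut-off argument outside compact sets of $Y$) is precisely what guarantees that the flows of $\xi_t,\nu_t$ exist on all of $[0,1]\times X$ and $[0,1]\times Y$ and produce genuine quantum diffeomorphisms, rather than merely local ones.

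The principal obstacle is twofold. \emph{First}, one must establish that infinitesimal stability is an open condition along the whole path $\{f_t\}$, and not only at $f_0=f$; classically this is a nontrivial consequence of Mather's finite-determinacy theorem and the Malgrange preparation theorem, and in $\mathfrak{Q}_S$ one must appeal to the $\mathbb{Z}_2$-graded noncommutative analogues developed in the author's previous works. \emph{Second}, one must solve the splitting equation for $(\xi_t,\nu_t)$ \emph{continuously and smoothly in} $t$, that is, realize the pointwise decomposition as a continuous section of the appropriate bundle of $\widehat A$-modules; this is a parametric version of infinitesimal stability and amounts to showing that the surjection implicit in \eqref{infinitesimal-stability} admits a smooth right inverse in a neighborhood of $f$. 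Once these two technical points are in place, the remainder is the quantum analogue of the standard Moser-type integration argument, and the integrability of the resulting full quantum vector fields $X\to\widehat{TX}$, $Y\to\widehat{TY}$ is handled exactly as in the quantum (super)manifold framework invoked throughout the paper.
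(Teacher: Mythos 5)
Your proposal follows essentially the same route as the paper: the paper also reduces the statement to Mather's commutative argument, after recasting the infinitesimal-stability decomposition $\zeta=T(f)\circ\xi+\nu\circ f$ as the Lie-derivative/deformation formula of its Lemma \ref{Lie-derivative} applied with $W\equiv X\times Y$, $V\equiv X$, $s=(\mathrm{id}_X,f)$ and $\mathbb{B}(g_\lambda)=h_\lambda$. Your two-directional sketch, the homotopy (Moser-type) integration step and the two technical obstacles you flag are simply a more explicit account of the part the paper delegates wholesale to Mather's proof for compact $X$ (and to properness in the non-compact case).
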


\begin{proof}
Note that the infinitesimal stability, requires existence of flows $g_t:X\to X$, $\partial g=\xi$, $h_t:Y\to Y$, $\partial h=\nu$, such that for the infinitesimal variation $\zeta$ of $f_t=h_t\circ f\circ g_t$ one has $\zeta=T(f)\circ\xi+\nu\circ f$. In fact, one has the following lemma.

\begin{lemma}\label{Lie-derivative}
Let $(W,V,\pi_W;\mathbb{B})$ be a bundle of geometric objects in the
category $\mathfrak{Q}_S$ and in the intrinsic sense \cite{PRA1}
\footnote{See also Refs.\cite{PRA2, PRA3, PRA9} for related
subjects.}. Let $\phi:\mathbb{R}\times V\to V$ be a one-parameter
group of $Q^\infty_w$ transformations of $V$, $\xi=\partial\phi$ its
infinitesimal generator and $s:V\to W$ a field of geometric objects,
i.e. a section of $\pi_W$. Then, $\phi$ induces a deformation
$\widetilde{s}$ of $s$ defined by means of the following commutative
diagram
\begin{equation}
\xymatrix{\mathbb{R}\times\mathbb{R}\times V\ar[d]_{(id_{\mathbb{R}},\phi)}\ar[r]^{\widetilde{s}}&W\\
\mathbb{R}\times V\ar[r]^{(id_{\mathbb{R}},s)}&\mathbb{R}\times W\ar[u]^{\mathop{\phi}\limits^{\overline{\circ}}{}_\lambda}\\}
\end{equation}
where $\mathop{\phi}\limits^{\overline{\circ}}{}_\lambda\equiv\mathbb{B}(\phi^{-1}_\lambda)$, $\forall\lambda\in\mathbb{R}$. One has $\widetilde{s}_{(0,0)}=s$. Then, for the infinitesimal variation of $\widetilde{s}$ ({\em Lie derivative} of $s$ with respect to the integrable field $\xi$),  $\partial(\widetilde{s}\circ d):V\to s^*vTW$, one has:
\begin{equation}\label{infinitesimal variation}
    \begin{array}{ll}
      \partial(\widetilde{s}\circ d)& =\partial(s\circ\phi)+\partial(\mathop{\phi}\limits^{\overline{\circ}})\circ s \\
      & =T(s)\circ\xi+\nu\circ s.
    \end{array}
\end{equation}
\end{lemma}

\begin{proof}
This lemma can be proved by copying the intrinsic proof for the
commutative case given in \cite{PRA1}.
\end{proof}
In our case we can consider the following situation, with respect to Lemma \ref{Lie-derivative}, $W\equiv X\times Y$, $V\equiv X$, $\mathbb{B}(g_\lambda)=h_\lambda$ and $s=(id_X,f)$.

Furthermore, in the case that $X$ is compact, the proof follows the
same lines of the proof given by Mather for commutative manifolds
\cite{MATH2}.
\end{proof}

\begin{theorem}
Stable maps $f:X\to Y$ do not necessiate to be dense in
$Q_w^\infty(X,Y)$.
\end{theorem}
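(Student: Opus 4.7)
The plan is to reduce the claim to the known commutative counterexamples of Mather and Thom by means of the classic limit functor. Recall that in the commutative category $C^\infty(X_C,Y_C)$ there exist pairs of dimensions $(\dim X_C,\dim Y_C)$ lying outside Mather's \emph{nice dimensions} for which the set of stable maps fails to be dense. Such examples are constructed by exhibiting an open set $U$ of jets at which any map must exhibit a singularity type not stable under the natural action of the jet groups. I would take one such $X_C$, $Y_C$ and a non-stable but non-approximable map $f_C\in C^\infty(X_C,Y_C)$ as the starting classical input.

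Next, I would lift this datum to the category $\mathfrak{Q}_S$ in the most trivial way: take the quantum superalgebras $A$ and $B$ as in the statement and form $X=X_C\otimes_{\mathbb{R}} A$, $Y=Y_C\otimes_{\mathbb{R}} B$, viewed as quantum supermanifolds via the canonical inclusion of commutative manifolds as classic skeletons. The map $f_C$ extends to a quantum smooth $f\in Q^\infty_w(X,Y)$ with classic limit $(f)_C=f_C$. The classic limit induces a continuous surjection
\begin{equation}
 c_*:Q^\infty_w(X,Y)\longrightarrow C^\infty(X_C,Y_C)
\end{equation}
with respect to the natural Whitney-type topologies, and it is compatible with the action of $\hat{Diff}(X)\times \hat{Diff}(Y)$ in the sense that the classic limit of a quantum diffeomorphism is a classical diffeomorphism of the corresponding classic skeletons.

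Suppose, toward a contradiction, that the set $\mathcal{S}\subset Q^\infty_w(X,Y)$ of stable quantum maps were dense. Then by continuity $c_*(\mathcal{S})$ would be dense in $C^\infty(X_C,Y_C)$. But if $g\in Q^\infty_w(X,Y)$ is stable and $h\circ g=g'\circ k$ realizes the stability with $g'$ in a neighborhood, taking classic limits yields $h_C\circ g_C=g'_C\circ k_C$ with $h_C,k_C$ classical diffeomorphisms, so $g_C$ is classically stable. Hence classical stable maps would be dense near $f_C$, contradicting the original Mather--Thom counterexample.

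The main obstacle will be establishing that the classic limit functor is continuous and essentially surjective for the Whitney-type topologies on both sides, and that the relation $h\circ g = g'\circ k$ descends cleanly to the classic skeletons without losing the diffeomorphism property of $h_C$ and $k_C$. This is expected to follow from the general properties of the classic limit on quantum (super)manifolds as developed in \cite{PRA21, PRA22, PRA31}, but the precise compatibility with the infinitesimal-stability characterization via Lemma~\ref{Lie-derivative} requires care, since the tangent functor $T$ on quantum supermanifolds has a strictly richer structure than its classical counterpart, and one must verify that no genuinely quantum deformation can undo a classically essential singularity.
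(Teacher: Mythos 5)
The paper's own proof is a single sentence: the claim is treated as an immediate corollary of the Thom--Levine non-density theorem for commutative manifolds, the point being that the statement is only existential-negative (``do not necessitate to be dense''), and classical manifolds are already objects of the category (take $A=B=\mathbb{R}$ with trivial odd part), in which case $Q^\infty_w(X,Y)=C^\infty(X,Y)$, $\hat Diff$ reduces to the ordinary diffeomorphism groups, and quantum stability coincides with classical stability; the classical counterexample therefore witnesses the claim with no transfer machinery at all. Your proposal pursues the same underlying idea (reduce to the commutative counterexample) but through a genuinely different and much heavier mechanism: tensoring with nontrivial quantum superalgebras and descending along the classic limit map $c_*$.

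That mechanism has a concrete gap at its central step, namely the implication ``$g$ quantum stable $\Rightarrow$ $g_C$ classically stable.'' Quantum stability gives a neighborhood $W_g\subset Q^\infty_w(X,Y)$ all of whose elements are quantum-equivalent to $g$; applying $c_*$ only shows that every map in $c_*(W_g)$ is classically equivalent to $g_C$. For classical stability of $g_C$ you need $c_*(W_g)$ to contain a whole $C^\infty$-neighborhood of $g_C$, i.e.\ you need $c_*$ to be \emph{open} at $g$, or to admit a continuous local section through $g$ itself: every classical map $g'_C$ near $g_C$ must be lifted to a quantum map near $g$ with classic limit $g'_C$. Continuity plus surjectivity of $c_*$ (the properties you list as the main obstacle) do not give this; the obvious trivial-extension section passes through the trivial extension of $g_C$, which need not be close to $g$, since $g$ is an arbitrary stable map produced by the density hypothesis. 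One could try to repair this by adding to $g$ a lift of the perturbation $g'_C-g_C$ inside a vector-bundle (affine) neighborhood of mapping spaces compatible with $c_*$, but that is exactly the missing ingredient and it is neither proved nor identified in your list of obstacles, so the contradiction does not close as written. Given that only non-density for some $X,Y$ is asserted, the economical fix is the paper's: specialize to the commutative objects of $\mathfrak{Q}_S$ and quote Thom--Levine directly.
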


\begin{proof}
This is just a corollary of the corresponding theorem for commutative manifolds given by Thom-Levine \cite{LEV1, LEV2}.
\end{proof}

\begin{example}{\em(Submersions and stability).}
Let $X$ be a compact quantum supermanifold. Let $f:X\to Y$ be a
quantum differentiable mapping of maximum possible super-rank. If
$m\ge r>1$,  $n\ge s>1$, $f$ is a {\em quantum submersion} and it is
(infinitesimally) stable.
\end{example}

\begin{example}{\em(Immersions and stability).}
Let $X$ a compact quantum supermanifold. Let $f:X\to Y$ be a quantum
differentiable mapping of maximum possible super-rank. If $m\le r$,
$n\le s$, $f$ is an {\em immersion} and if it is $1:1$ then it is
also stable. (Not all immersions are stable.)
\end{example}

\begin{definition}{\em(Singular solutions of quantum super PDE's).}
Let $\pi:W\to M$ be a fiber bundle, where $M$ is a quantum supermanifold of
dimension $(m|n)$ on the quantum superalgebra $A$ and $W$ is a
quantum supermanifold of dimension $(m|n,r|s)$ on the quantum
superalgebra $B\equiv A\times E$, where $E$ is also a $Z$-module,
with $Z=Z(A)$ the centre of $A$.

Let $E_k\subset JD^k(W)$ be a quantum super PDE. By using the
natural embedding $J\hat D^k(W)\subset \hat J^k_{m|n}(W) $, we can
consider quantum super PDEs $\hat E_k\subset J\hat D^k(W)$ like
quantum super PDEs $\hat E_k\subset \hat J^k_{m|n}(W) $, hence we
can consider solutions of $\hat E_k$ as $(m|n)$-dimensional, (over
$A$), quantum supermanifolds $V\subset\hat E_k$ such that $V$ can be
represented in the neighborhood of any of its points $q'\in V$,
except for a nowhere dense subset $\Sigma(V)\subset V$, of
dimension $\le (m-1|n-1)$, as $N^{(k)}$, where $N^{(k)}$ is the
$k$-quantum prolongation of a $(m|n)$-dimensional (over $A$) quantum
supermanifold $N\subset W$. In the case that $\Sigma(V)=\varnothing$,
we say that $V$ is a {\em regular solution} of $\hat E_k\subset \hat
J^k_{m|n}(W)$. Solutions $V$ of $\hat E_k\subset \hat J^k_{m|n}(W)$,
even if regular ones, are not, in general diffeomorphic to their
projections $\pi_k(V)\subset M$, hence are not representable by
means of sections of $\pi:W\to M$. $\Sigma(V)\subset V$ is the {\em
singular points set} of $V$. Then $V\setminus\Sigma(V)=\bigcup_rV_r$
is the disjoint union of connected components $V_r$. For every of
such components $\pi_{k,0}:V_r\to W$ is an immersion and can be
represented by means of $k$-prolongation of some quantun
supermanifold of dimension $m|n$ over $A$, contained in $W$. Whether
we consider $\hat E_k$ as contained in $J\hat D^k(W)$ then {\em
regular solutions} are locally obtained as image of $k$-derivative
of sections of $\pi:W\to M$. So we can (locally) represent such
solutions by means of mapping $f:M\to E_k$, such that $f=D^ks$, for
some section $s:M\to W$.

We shall also consider
solutions of $\hat E_k\subset \hat J^k_{m|n}(W)$, any subset $V\subset\hat E_k$,
that can be obtained as projections of ones of the previous type,
but contained in some $s$-prolongation $\hat E_{k+s}\subset \hat
J^{k+s}_{m|n}(W)$, $s>0$.

We define {\em weak solutions}, solutions $V\subset \hat E_k$, such that
the set $\Sigma(V)$ of singular points of $V$, contains also
discontinuity points, $q,q'\in V$, with
$\pi_{k,0}(q)=\pi_{k,0}(q')=a\in W$, or $\pi_{k}(q)=\pi_{k}(q')=p\in
M$. We denote such a set by $\Sigma(V)_S\subset\Sigma(V)$, and, in
such cases we shall talk more precisely of {\em singular boundary}
of $V$, like $(\partial V)_S=\partial V\setminus\Sigma(V)_S$.
However for abuse of notation we shall denote $(\partial V)_S$,
(resp. $\Sigma(V)_S$), simply by $(\partial V)$, (resp.
$\Sigma(V)$), also if no confusion can arise.
\end{definition}

\begin{definition}{\em(Stable solutions of quantum super PDE's).}
Let us consider a quantum super PDE $\hat E_k\subset J\hat D^k(W)$, and let us denote $\underline{Sol}(\hat E_k)$ the
set of regular solutions of $E_k$. This has a natural structure of locally convex manifold.
Let $f:X\to E_k$ be a
regular solution, where $X\subset M$ is a smooth $(m|n)$-dimensional
compact manifold with boundary $\partial X$. Then $f$ is {\em
stable} if there is a neighborhood $W_f$ of $f$ in
$\underline{Sol}(\hat E_k)$,
such that each $f'\in W_f$ is equivalent to $f$, i.e., $f$ is
transformed in $f'$ by some integrable vertical symmetries of $\hat E_k$.
\end{definition}

\begin{theorem}\label{deformation}
Let $\hat E_k\subset J\hat D^k(W)$ be a $k$-order quantum super PDE
on the fiber bundle $\pi:W\to M$ in the category of quantum smooth
supermanifolds. Let $s:M\to W$ be a section, solution of $\hat E_k$,
and let $\nu:M\to s^*vTW\equiv \hat E[s]$ be an integrable solution
of the linearized equation $\hat E_k[s]\subset  J\hat D^k(\hat
E[s])$. Then to $\nu$ it is associated a flow
$\{\phi_\lambda\}_{\lambda\in J}$, where $J\subset \mathbb{R}$ is a
neighborhood of $0\in\mathbb{R}$, that transforms $V$ into a new
solution $\widetilde{V}\subset \hat E_k$.
\end{theorem}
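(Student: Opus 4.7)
The plan is to produce the deformed solution $\widetilde{V}$ by integrating the vertical vector field associated to $\nu$ and then prolonging its flow to $J\hat D^k(W)$. First I would observe that, by hypothesis, $\nu:M\to s^*vTW$ is integrable, so viewing $\nu$ as a vertical vector field along the image $s(M)\subset W$, one obtains (after extending in a neighborhood of $s(M)$ in a way compatible with the fibration $\pi:W\to M$) a one-parameter family of quantum diffeomorphisms $\phi_\lambda:W\to W$ with $\partial\phi=\nu$, defined for $\lambda$ in some neighborhood $J\subset\mathbb{R}$ of $0$. Setting $s_\lambda\equiv\phi_\lambda\circ s$, the $\phi_\lambda$ deform the section $s$ through sections of $\pi$, with $s_0=s$.

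Next I would prolong: the flow $\{\phi_\lambda\}$ naturally lifts to a flow $\{\hat\phi_\lambda\}$ on $J\hat D^k(W)$ via the $k$-jet-derivative functor, with infinitesimal generator the $k$-quantum prolongation $\nu^{(k)}$ of $\nu$. Applying Lemma \ref{Lie-derivative} to the geometric-objects bundle structure of $J\hat D^k(W)\to M$, the infinitesimal variation of $D^k s_\lambda$ at $\lambda=0$ is exactly $(D^k s)^*(\nu^{(k)})$, i.e.\ the section of $(D^k s)^*vTJ\hat D^k(W)$ associated to $\nu$ through the canonical identifications. The condition that $\nu$ solves the linearized equation $\hat E_k[s]=(D^ks)^*vT\hat E_k$ means precisely that this infinitesimal variation is tangent to $\hat E_k$ along $V=D^ks(M)$, so $\nu^{(k)}$ is an infinitesimal vertical symmetry of $\hat E_k$ at the points of $V$.

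From this infinitesimal tangency I would pass to finite tangency as follows. Because $\nu$ is integrable, $\nu^{(k)}$ is itself integrable on a neighborhood of $V$ in $J\hat D^k(W)$, with integral flow $\{\hat\phi_\lambda\}$; integrability of an infinitesimal symmetry on a smooth subset implies that the generated one-parameter group maps the subset into itself for $\lambda\in J$ sufficiently small. Hence $\widetilde{V}_\lambda\equiv\hat\phi_\lambda(V)\subset\hat E_k$, and $\widetilde{V}_\lambda=D^k s_\lambda(M)$ is a regular solution of $\hat E_k$ that reduces to $V$ at $\lambda=0$. Writing $\widetilde{V}=\widetilde{V}_\lambda$ for any such $\lambda$ gives the asserted new solution.

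The main obstacle I expect is the passage from the \emph{infinitesimal} symmetry statement, encoded by $\nu\in\Gamma(\hat E_k[s])$, to the \emph{finite} invariance $\hat\phi_\lambda(V)\subset\hat E_k$ in the noncommutative setting; this is where the integrability hypothesis on $\nu$ is essential, and where one must invoke the full version in $\mathfrak{Q}_S$ of the fact that $\hat E_k[s]$ is a vector neighborhood of $\hat E_k$ at $V$, so that sections of $\hat E_k[s]$ genuinely parametrize admissible smooth deformations of $V$ inside $\hat E_k$. Once this neighborhood identification is available in the quantum super category (which it is, by the same local arguments used in Theorem \ref{crystal-structure-quantum-super-pdes}), the construction above closes up and produces $\widetilde{V}$.
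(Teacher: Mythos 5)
Your overall strategy (integrate a vertical field attached to $\nu$, prolong the flow to $J\hat D^k(W)$, and push $V$ along it) is the same as the paper's, but the step on which everything hinges is justified by a false principle. You establish only that the prolonged generator is tangent to $\hat E_k$ \emph{at the points of} $V=D^ks(M)$, and then assert that ``integrability of an infinitesimal symmetry on a smooth subset implies that the generated one-parameter group maps the subset into itself.'' Tangency along $V$ alone does not give this: for the submanifold $\{y=0\}\subset\mathbb{R}^2$, the field $\partial_x+x\,\partial_y$ is tangent at the origin, yet its flow through the origin, $\lambda\mapsto(\lambda,\lambda^2/2)$, leaves the submanifold immediately. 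First-order contact (which is all the linearized equation $\hat E_k[s]=(D^ks)^*vT\hat E_k$ directly encodes) does not yield finite invariance; for that you need the prolonged field to be tangent to $\hat E_k$ on a neighborhood of $V$ \emph{inside} $\hat E_k$, i.e.\ to be a genuine infinitesimal vertical symmetry of the equation, $\zeta.F^I=0$ on $\hat E_k$. Your proposed patch — appealing to ``the same local arguments used in Theorem \ref{crystal-structure-quantum-super-pdes}'' — does not address this: that theorem concerns integral bordism groups as extensions of crystallographic subgroups and says nothing about the relation between solutions of $\hat E_k[s]$ and symmetries of $\hat E_k$.

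The paper closes exactly this gap differently: using the commutative diagram relating $\hat E_k[s]$, $(D^ks)^*vT\hat E_k$ and $J\hat D^k(\hat E[s])$, together with the explicit local prolongation formulas for infinitesimal vertical symmetries (the expressions $Y^j_{\alpha_1\cdots\alpha_r i}=Z^{(r)}_i(Y^j_{\alpha_1\cdots\alpha_r})$, compare also Lemma \ref{Horiz-symm}), it identifies the integrable solutions $\nu$ of $\hat E_k[s]$ as precisely those vertical fields whose prolongation satisfies $D^k\nu=\zeta\circ D^ks$ for some infinitesimal vertical symmetry $\zeta$ of $\hat E_k$; the ``integrable'' hypothesis on $\nu$ is what guarantees both this identification and the existence of the flow of $\zeta$. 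Since $\zeta$ is a symmetry of $\hat E_k$ (not merely tangent along $V$), its flow preserves $\hat E_k$ and the contact structure, and therefore carries the regular solution $V$ into new solutions $\widetilde V\subset\hat E_k$, with $\nu$ serving as the initial datum determining the flow. To repair your argument you would need to insert this identification (or an equivalent statement that the infinitesimal deformation extends to a symmetry of the equation) before invoking the flow; as written, the passage from infinitesimal to finite tangency is a genuine gap.
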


\begin{proof}
Let $(x^\alpha,y^j)$ be fibered coordinates on $W$. Let
$\nu=\partial y_j(\nu^j):M\to s^*vTW$ a vertical vector field on $W$
along the section $s:M\to W$. Then $\nu$ is a solution of $\hat
E_k[s]$ iff the following diagram is commutative:
\begin{equation}
\xymatrix{\hat E_k\ar@{^{(}->}[d]&vT\hat
E_k\ar@{^{(}->}[d]\ar[l]&(D^ks)^*vT\hat
E_k\ar@{^{(}->}[l]\ar@{^{(}->}[d]\ar@{=}[r]^(.6){\sim}&
\hat E_k[s]\ar@{^{(}->}[d]\\
J\hat D^k(W)\ar@/_2pc/[dd]_{\pi_k}\ar[d]^{\pi_{k,0}}&vTJ\hat
D^k(W)\ar[l]\ar[d]&(D^ks)^*vTJ\hat D^k(W)\ar@{^{(}->}[l]
\ar@{=}[r]^(.6){\sim}&J\hat D^k(\hat E[s])\ar[d]_{\bar\pi_{k,0}}\ar@/^2pc/[dd]^{\bar\pi_k}\\
W\ar[d]_{\pi}&vTW\ar[l]_{\pi'}&&\hat E[s]\ar@{^{(}->}[ll]\ar[d]^{\bar\pi}\\
M\ar@/^4pc/[uuu]^(.8){D^ks}\ar@/_1pc/[u]^{s}\ar@{=}[rrr]&&&M\ar@/^1pc/[u]^{\nu}\ar@/_4pc/[uuu]_(.8){D^k\nu}\\}
\end{equation}
Then $D^k\nu(p)$ identifies, for any $p\in M$, a vertical vector on
$\hat E_k$ in the point $q=D^ks(p)\in V=D^ks(M)\subset \hat E_k$. On
the other hand infinitesimal vertical symmetries on $E_k$ are
locally written in the form
\begin{equation}
\left\{\begin{array}{l} \zeta=\sum_{0\le|\alpha|\le k}\partial
y_j^\alpha(Y^j_\alpha),\quad 0=\zeta.F^I=<dF^I,\left(
\sum_{0\le r\le k}\partial y_j^{\alpha_1\cdots\alpha_r}(Y^j_{\alpha_1\cdots\alpha_r})\right)>\\
{}\\
Y^j_\alpha=Z^{(0)}_\alpha( Y^j),\quad
Z^{(0)}_\alpha=\partial x_\alpha+\partial y_sy^s_\alpha\\
{}\\
Y^j_{\alpha_1\cdots \alpha_ri}=Z^{(r)}_i(Y^j_{\alpha_1\cdots
\alpha_r}),\quad Z^{(r)}_i=Z_i^{(r-1)}+\partial y_s^{\gamma_1\cdots
\gamma_r}y^s_{\gamma_1\cdots
\gamma_ri}\\
\end{array}\right.
\end{equation}
where $Y^j_\alpha\in Q^\infty_w(U\subset J\hat
D^k(W);\mathop{\widehat{A}}\limits^{|\alpha|}(E))$,
$\mathop{\widehat{A}}\limits^{|\alpha|}(E)\equiv
Hom_Z(\mathop{\overbrace{A\otimes_Z\cdots\otimes_ZA}}\limits^{|\alpha|};E)$,
$0\le|\alpha|\le k$. $\partial y_j^\alpha(q)\in
Hom_Z(\mathop{\widehat{A}}\limits^{|\alpha|}(E);T_qJ\hat D^k(W))$,
$y^j_{\alpha_1\cdots\alpha_r}\in
Q^\infty_w(U;\mathop{\widehat{A}}\limits^{r}(E))$. Then we can see
that solutions of $\hat E_k[s]$ are vertical vector fields $\nu:M\to
s^*vTW\equiv \hat E[s]$, such that their prolongations
$D^k\nu=\zeta\circ D^ks$, for some vertical symmetry $\zeta$ of
$\hat E_k$.

Therefore, the flows of above integrable vertical vector fields, transform
regular solutions $V$ of $\hat E_k$ into new solutions of $\hat
E_k$. Solutions of the linearized equation $\hat E_k[s]$ give
initial conditions for the determination of such vertical flows.
\end{proof}
The following lemmas are also important to understand how the
structure of solutions of $\hat E_k[s]$ are related to the vertical
symmetries of $\hat E_k$. (For complementary informations on the contact structure of $\hat J^k_{m|n}(W)$, see \cite{PRA21}.)

\begin{lemma}{\em(Symmetries of horizontal $k$-order contact ideals).}\label{Horiz-symm}
Let  $\rceil:\hat J^k_{m|n}(W)\to \hat J^{k+1}_{m|n}(W)$  be a {\em
quantum $(k+1)$-connection} on $W$, i.e.,  a $Q^\infty_w$-section of
$\pi_{k+1,k}$. (The restriction of $\rceil$ to $\hat J^k(W)\subset
\hat J^k_{m|n}(W)$ is also called quantum $(k+1)$-connection). Let
$\widehat{\mathfrak{H}}_k(\rceil)$ be the {\em quantum horizontal
$k$-order contact ideal} of $\widehat{\Omega}^\bullet(\hat
J^k_{m|n}(W))$ given by
$\widehat{\mathfrak{H}}_k(\rceil)\equiv\rceil^*\widehat{\mathfrak{C}}_{k+1}(W)$,
where $\widehat{\mathfrak{C}}_{k+1}(W)$ is the contact ideal of
$\hat J^{k+1}_{m|n}(W)$. Locally one can write
$\widehat{\mathfrak{H}}_k(\rceil)=
<\omega^j,\dots,\omega^j_{\alpha_1\dots\alpha_{k-1}},\hat
H^j_{\alpha_1\dots\alpha_k}>$, where
$$\left\{\begin{array}{ll}
           \hat H^j_{\alpha_1\dots\alpha_k}& \equiv
            \rceil^*\omega^j_{\alpha_1\dots\alpha_k}
            =\rceil^*(dy^j_{\alpha_1\dots\alpha_k}-
            y^j_{\alpha_1\dots\alpha_k\beta}dx^\beta)\\
            &\\
            &=dy^j_{\alpha_1\dots\alpha_k}-
            \rceil^j_{\alpha_1\dots\alpha_k\beta}dx^\beta\in\widehat{\Omega}^1(\hat J^k_{m|n}(W)),\\
         \end{array}\right.$$
with $\rceil^j_{\alpha_1\dots\alpha_k\beta}\equiv
y^j_{\alpha_1\dots\alpha_k\beta}\circ\rceil\in
\widehat{\Omega}^0(J^k_{m|n}(W))$. $\widehat{\mathfrak{C}}_k(W)$ is
a subideal of $\widehat{\mathfrak{H}}$. Then the {\em quantum
horizontal $k$-order Cartan distribution}
$\mathbf{H}_k(\rceil)\subset TJ^k_n(W)$ (identified by a
$(k+1)$-connection $\rceil$) is the Cauchy characteristic
distribution associated to $\mathfrak{H}_k(\rceil)$.
$\mathfrak{d}(\mathbf{H}_k(\rceil))$ admits the following local
{\em(canonical basis):}\footnote{For a distribution
$\mathbf{E}\subset TX$ on a manifold $X$, we denote by
$\mathfrak{d}(\mathbf{E})$ the vector space of vector fields on $X$
belonging to $\mathbf{E}$.}

$$\left\{\begin{array}{ll}
\zeta_\alpha=&\partial x_\alpha+\partial y_jy^j_\alpha\\
&\\
&+\cdots+\partial
y_j^{\alpha_1\dots\alpha_{k-1}}y^j_{\alpha_1\dots\alpha_{k-1}\alpha}
+\partial
y_j^{\alpha_1\dots\alpha_k}\rceil^j_{\alpha_1\dots\alpha_k\alpha}.\\
\end{array}\right.$$
For any quantum $(k+1)$-connection $ \rceil$ on $W$, one has the
following direct sum decompositions:
\begin{equation}\label{splitting-quantum-Cartan-distribution}
   \left\{
\begin{array}{l}
 \mathbf{E}^k_{m|n}(W)_q
\cong\mathbf{H}_k(\rceil)_q \bigoplus Hom_Z(S^k(T_aN);\nu_a) \\
 \widehat{\Omega}^1(\hat J^k_{m|n}(W))\cong\widehat{\Omega}^1(\hat
J^k_{m|n}(W))_v \bigoplus\widehat{\mathfrak{H}}_k(\rceil)^1
\end{array}
\right.
\end{equation}
with $a\equiv\pi_{k,0(q)}\in W$, $\rceil(q)=[N]^{k+1}_a$, and
$\mathbf{H}_k(\rceil)_q\equiv T_qN^{(k)}$,
$\widehat{\mathfrak{H}}_k(\rceil)^1\equiv\widehat{\mathfrak{H}}_k(\rceil)\cap
\widehat{\Omega}^1(\hat J^k_{m|n}(W))$. The connection $\rceil$ is
{\em flat}, i.e., with zero curvature, iff the differential ideal
$\widehat{\mathfrak{H}}_k(\rceil)$ is closed, or equivalently, iff
$\mathbf{H}_k(\rceil)$ is involutive. If $\rceil$ is a flat quantum
$(k+1)$-connection on $W$, then one has the
following:\footnote{$\mathbf{C}_{har}(\widehat{\mathfrak{H}}_k(\rceil))$
denotes the characteristic distribution of
$\widehat{\mathfrak{H}}_k(\rceil)$, and
$\mathfrak{char}(\widehat{\mathfrak{H}}_k(\rceil))$ the
corresponding vector space of its vector fields. Furthermore,
$\mathfrak{s}(\widehat{\mathfrak{H}}_k(\rceil))$ denotes the vector
space of infinitesimal symmetries of the ideal
$\widehat{\mathfrak{H}}_k(\rceil)$. $\mathfrak{v}_k(\rceil)$ is the
vector space of vertical infinitesimal symmetries of the ideal
$\widehat{\mathfrak{H}}_k(\rceil)$.}

\begin{equation}\label{quantum-contact-structure-connection}
   \left\{
\begin{array}{l}
\overline{\widehat{\mathfrak{C}}}_k(W)\subset
\widehat{\mathfrak{H}}_k(\rceil) \quad\hbox{\rm as a closed subideal}\\
\mathbf{H}_k(\rceil)\cong\mathbf{C}_{har}(\widehat{\mathfrak{H}}_k(\rceil)); \quad
\mathfrak{char}(\widehat{\mathfrak{H}}_k(\rceil))
\subset\mathfrak{s}(\widehat{\mathfrak{H}}_k(\rceil)). \\
\end{array}
\right.
\end{equation}

$\hat J^k_{m|n}(W)$ is foliated by regular solutions $Z$ such that
$\widehat{\mathfrak{H}}_k(\rceil)|_Z=0$. The leaves of the foliation
are given in implicit form by the following equations:
$f^I(x^\alpha,y^j,\dots,y^j_{\alpha_1\dots\alpha_k})=\kappa^I\in
B_k$, $ 1\le I\le p+q$, $\dim J^k_n(W)-(p|q)=m|n$, where $f^I$
represent a complete independent system of primitive integrals of
the linear system of PDEs $(\zeta_\alpha.f)=0$, $1\le\alpha\le m+n$,
where $\zeta_\alpha$ is a basis (e.g., the canonical basis) of the
horizontal distribution $\mathbf{H}_k(\rceil)$.\footnote{A (local)
section $s$ of $\pi$ identifies a flat (local) $ (k+1)$-connection
$\rceil^j_{\alpha_1\dots\alpha_{k+1}}\equiv (\partial
x_{\alpha_1}\dots\partial x_{\alpha_{k+1}}.s^j)$.} Any
$\zeta\in\mathfrak{s}((\mathbf{H}_k(\rceil))$ has the following
local representation:
\begin{equation}\label{symmetries-horizontal-k-contact-ideal}
    \left\{
\begin{array}{ll}
  \zeta=& \zeta_\alpha(X^\alpha)+\partial y_j(Y^j)+
\partial y_j^\alpha(\zeta_\alpha.Y^j)+
\partial y_j^{\alpha_1\alpha_2}(\zeta_{\alpha_1}\zeta_{\alpha_2}.Y^j)\\
 &\\
 &+\cdots+
\partial y_j^{\alpha_1\dots\alpha_k}(\zeta_{\alpha_1}\dots\zeta_{\alpha_k}.Y^j),\\
\end{array}\right.
\end{equation}

for any choice of $x^\alpha\in Q^\infty_w(U\subset \hat
J^k_{m|n}(W),A)$, ${1\le \alpha\le m+n}$, and $Y^j\in
Q^\infty_w(U\subset \hat J^k_{m|n}(W),E)$, ${1\le j\le r+s}$, such
that
\begin{equation}\label{cond-symmetries-horizontal-k-contact-ideal}
\left\{
\begin{array}{ll}
(\zeta_{\alpha_1}\dots\zeta_{\alpha_k}.Y^j)=& (\partial
y_i.\rceil^j_{\alpha_1\dots\alpha_k})Y^i+ (\partial
y_i^\gamma.\rceil^j_{\alpha_1\dots\alpha_k})(\zeta_\gamma.Y^i)\\
&\\
&+\cdots+ (\partial y_i^{\gamma_1\dots\gamma_k}.
\rceil^j_{\alpha_1\dots\alpha_k})
(\zeta_{\gamma_k}\dots\zeta_{\gamma_1}.Y^i).\\
\end{array}\right.
\end{equation}

The space $\mathfrak{s}(\mathfrak{H}_k(\rceil))$ admits the following
direct sum decomposition:
$$\mathfrak{s}(\mathfrak{H}_k(\rceil))\cong\mathfrak{d}(\mathbf{H}_k(\rceil))
\bigoplus\mathfrak{v} _k(\rceil),$$ where $\mathfrak{v} _k(\rceil)$
is the collection of all vectors of the form
\begin{equation}\label{}
\left\{
\begin{array}{ll}
\xi=&\zeta-\zeta_\alpha(X^\alpha) =\partial y_j(Y^j)+
\partial y_j^\alpha(\zeta_\alpha.Y^j)+
\partial y_j^{\alpha\beta}(\zeta_\alpha\zeta_\beta.Y^j)\\
&\\
&+\dots+
\partial y_j^{\alpha_1\dots\alpha_k}(\zeta_{\alpha_1}\dots\zeta_{\alpha_k}.Y^j),\\
\end{array}\right.
\end{equation}

for any choice of $Y^j\in Q^\infty_w(U\subset \hat J^k_{m|n}(W),E)$,
${1\le j\le r+s}$, such that conditions
$(\ref{cond-symmetries-horizontal-k-contact-ideal})$ are satisfied.
${\frak s}(\mathfrak{H}_k(\rceil))$ is a Lie algebra that admits the
subalgebra $\mathfrak{d}(\mathbf{H}_k(\rceil))$ as an ideal.
\end{lemma}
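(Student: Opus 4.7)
The plan is to deduce each assertion by computing in adapted local quantum coordinates, exploiting the fact that a quantum $(k{+}1)$-connection is by definition a $Q^\infty_w$-section of $\pi_{k+1,k}$, and then transferring the standard jet-space arguments to the category $\mathfrak{Q}_S$. First, pulling back the contact generators $\omega^j_{\alpha_1\cdots\alpha_r}=dy^j_{\alpha_1\cdots\alpha_r}-y^j_{\alpha_1\cdots\alpha_r\beta}dx^\beta$ of $\widehat{\mathfrak{C}}_{k+1}(W)$ by $\rceil$: for $r<k$ the forms pull back to themselves since $\rceil$ is a section of $\pi_{k+1,k}$, while for $r=k$ one substitutes $y^j_{\alpha_1\cdots\alpha_k\beta}\circ\rceil=\rceil^j_{\alpha_1\cdots\alpha_k\beta}$, obtaining the formula for $\hat H^j_{\alpha_1\cdots\alpha_k}$. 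This also makes transparent the inclusion $\widehat{\mathfrak{C}}_k(W)\subset \widehat{\mathfrak{H}}_k(\rceil)$.

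Next, I would identify the Cauchy characteristic distribution of $\widehat{\mathfrak{H}}_k(\rceil)$ by writing the canonical basis $\{\zeta_\alpha\}$ and checking directly that each of the generators $\omega^j,\dots,\omega^j_{\alpha_1\cdots\alpha_{k-1}},\hat H^j_{\alpha_1\cdots\alpha_k}$ vanishes on $\zeta_\alpha$: this is a coordinate verification using only the definition of $Z^{(r)}_\alpha$ and the fact that $\rceil^j_{\alpha_1\cdots\alpha_k\alpha}=y^j_{\alpha_1\cdots\alpha_k\alpha}\circ\rceil$. The two direct-sum decompositions in \eqref{splitting-quantum-Cartan-distribution} then follow by a dimension count (over the quantum superalgebra $B$): the span of $\{\zeta_\alpha\}$ has rank $m|n$, it coincides with $T_qN^{(k)}$ for the $(k{+}1)$-jet $[N]^{k+1}_a=\rceil(q)$, and its complement in the Cartan distribution $\mathbf{E}^k_{m|n}(W)_q$ is precisely $Hom_Z(S^k(T_aN);\nu_a)$, which is the kernel of $\rceil_*$ restricted to the Cartan plane; dually, $\widehat{\mathfrak{H}}_k(\rceil)^1$ is the horizontal complement of the space of vertical $1$-forms.

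For the flatness statement, I would compute $d\hat H^j_{\alpha_1\cdots\alpha_k}$ modulo $\widehat{\mathfrak{H}}_k(\rceil)$: the part that does not lie in the ideal is exactly the $\zeta_\alpha\zeta_\beta-\zeta_\beta\zeta_\alpha$ bracket applied to $\rceil^j_{\alpha_1\cdots\alpha_k\bullet}$, i.e.\ the curvature of $\rceil$. Thus closedness of the ideal, involutivity of $\mathbf{H}_k(\rceil)$, and vanishing of the curvature are equivalent; Frobenius in $\mathfrak{Q}_S$ (as developed in the cited references on quantum (super)manifolds) then yields the foliation whose leaves are the level sets of a complete system of first integrals $\{f^I\}$ of the system $(\zeta_\alpha.f)=0$. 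The inclusion $\mathfrak{char}(\widehat{\mathfrak{H}}_k(\rceil))\subset \mathfrak{s}(\widehat{\mathfrak{H}}_k(\rceil))$ is automatic once the ideal is closed, since $L_\xi=d\iota_\xi+\iota_\xi d$ preserves a closed differential ideal containing $\iota_\xi\omega=0$ for all generators.

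Finally, for the symmetry characterization, I would expand a candidate vector field in the basis $\{\zeta_\alpha,\partial y_j,\partial y_j^{\alpha_1\cdots\alpha_r}\}$ adapted to the decomposition, impose $L_\zeta\omega\in\widehat{\mathfrak{H}}_k(\rceil)$ on each generator, and read off step by step that the $\partial y_j^{\alpha_1\cdots\alpha_r}$-components are forced to be iterated $\zeta$-derivatives of the $\partial y_j$-component $Y^j$; the compatibility at level $k$ produces exactly the constraint \eqref{cond-symmetries-horizontal-k-contact-ideal}. The splitting $\mathfrak{s}(\mathfrak{H}_k(\rceil))\cong \mathfrak{d}(\mathbf{H}_k(\rceil))\oplus \mathfrak{v}_k(\rceil)$ follows by subtracting the horizontal part $\zeta_\alpha(X^\alpha)$, and the Lie-algebra/ideal claim follows from the standard identity $[L_\xi,L_\eta]=L_{[\xi,\eta]}$ applied to elements of the characteristic subalgebra. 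The main technical obstacle is book-keeping: the coefficients $\rceil^j_{\alpha_1\cdots\alpha_k\beta}$ and $Y^j$ are quantum-valued in $\widehat{B}$, so the commutation of iterated $\zeta_\alpha$-derivatives, and the compatibility of the Leibniz rule with the $\mathbb{Z}_2$-graded structure of $B$, must be handled by invoking the contact calculus on $\hat J^k_{m|n}(W)$ developed in \cite{PRA21} rather than by naive classical manipulations.
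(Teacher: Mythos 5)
The paper does not actually prove Lemma \ref{Horiz-symm}: it is stated without a proof environment, as a transcription to the category $\mathfrak{Q}_S$ of the corresponding lemma for commutative PDE's, with the reader referred to \cite{PRA21} for the quantum contact calculus on $\hat J^k_{m|n}(W)$. So there is no argument in the paper to compare yours against line by line; what can be said is that your plan reconstructs exactly the route the author clearly has in mind, namely the classical jet-space computation (pull back the contact generators by the section $\rceil$, identify the horizontal distribution spanned by the $\zeta_\alpha$, read flatness off $d\hat H^j_{\alpha_1\dots\alpha_k}$ modulo the ideal, apply Frobenius, and derive the prolongation formula for symmetries by imposing $L_\zeta$-invariance generator by generator), carried out with the noncommutative calculus of \cite{PRA21} in place of the classical one. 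Your treatment of each piece is sound as a plan, and you correctly flag the only genuinely delicate point, the ordering and graded Leibniz issues for the $\widehat{B}$-valued coefficients.

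Two small corrections you should make when writing this out. First, the complement of $\mathbf{H}_k(\rceil)_q$ inside the Cartan plane $\mathbf{E}^k_{m|n}(W)_q$ is $Hom_Z(S^k(T_aN);\nu_a)$ because it is the vertical tangent space of $\pi_{k,k-1}$ intersected with the Cartan distribution, i.e.\ the kernel of $(\pi_{k,k-1})_*$ restricted to $\mathbf{E}^k_{m|n}(W)_q$ --- not the kernel of $\rceil_*$, which is injective since $\rceil$ is a section. Second, when you identify $\mathbf{H}_k(\rceil)$ with the characteristic distribution, checking that the $1$-form generators vanish on the $\zeta_\alpha$ only identifies the kernel distribution of the degree-one part of the ideal; for the Cauchy characteristic property in the sense of $\iota_{\zeta_\alpha}\widehat{\mathfrak{H}}_k(\rceil)\subset\widehat{\mathfrak{H}}_k(\rceil)$ for the differential ideal you must also verify that $\iota_{\zeta_\alpha}d\hat H^j_{\alpha_1\dots\alpha_k}$ lies in the ideal, which is where the flatness hypothesis (closure of the ideal) enters, consistently with the second line of \eqref{quantum-contact-structure-connection} being asserted only for flat $\rceil$. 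With these adjustments your outline matches the intended proof.
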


The general local expression for the symmetries of the
$(m|n)$-dimensional involutive Cartan distribution
$\mathbf{E}_{\infty}(W)\subset T \hat J^\infty_{m|n}(W)$, can be
also obtained by equations
{\em(\ref{symmetries-horizontal-k-contact-ideal})} with all $k>0$,
and forgetting conditions
(\ref{cond-symmetries-horizontal-k-contact-ideal}).\footnote{In fact
the Cartan distribution on $\hat J^\infty_n(W)$ can be considered an
horizontal distribution induced by the canonical connection
identified by the local canonical basis $\zeta_\alpha=\partial
x_\alpha+\sum_{|\beta|\ge 0}y^j_{\alpha\beta}\partial y_j^\beta$
just generating $\mathbf{E}^{\infty}_n(W)$.} So we get the following
expression for $\zeta\in\mathfrak{s}(\mathbf{E}^\infty_{m|n}(W))$:
\begin{equation}\label{infty-contact-symmetries}
\left\{
\begin{array}{ll}
  \zeta&=\partial_\alpha(X^\alpha)+\sum_{r\ge 0}\partial y_j^{\alpha_1\cdots\alpha_r}(Y^j_{\alpha_1\cdots\alpha_r})\\
  \partial_\alpha&=\partial x_\alpha+\sum_{r\ge 0}\partial y_j^{\alpha_1\cdots\alpha_r}(y^j_{\alpha\alpha_1\cdots\alpha_r})\\
  Y^j_{\alpha_1\cdots\alpha_r}&=(\partial_{\alpha_1}\cdots\partial_{\alpha_r}.Y^j),
  \quad Y^j\in Q^\infty_w(U\subset \hat J^\infty_{m|n}(W),E),
{1\le j\le r+s}.\\
\end{array}\right.
    \end{equation}
Then the canonical splitting
$T_q\hat J^\infty_{m|n}(W)\cong(\mathbf{E}^\infty_{m|n}(W))_q\bigoplus
vT_q\hat J^\infty_{m|n}(W)$, $q\in \hat J^\infty_{m|n}(W)$, gives the following
splitting in
$\mathfrak{s}(\mathbf{E}^\infty_{m|n}(W))=\mathfrak{d}(\mathbf{E}^\infty_{m|n}(W))\bigoplus
\mathfrak{v}_\infty$, $\zeta=\zeta_o+\zeta_v$, with
$\zeta_o=\partial_\alpha(X^\alpha)$ and $\zeta_v=\sum_{r\ge
0}\partial
y_j^{\alpha_1\cdots\alpha_r}(Y^j_{\alpha_1\cdots\alpha_r})$, where $Y^j_{\alpha_1\cdots\alpha_r}$
are given in (\ref{infty-contact-symmetries}).

\begin{definition}\label{fun-stable-PDE}
Let $\hat E_k\subset \hat J^k_{m|n}(W)$, where $\pi:W\to M$ is a
fiber bundle, in the category of quantum smooth supermanifolds. We
say that $\hat E_k$ is {\em functionally stable} if for any compact
regular solution $V\subset \hat E_k$, such that $\partial
V=N_0\bigcup P \bigcup N_1$ one has quantum solutions
$\widetilde{V}\subset \hat J^{k+s}_{m|n}(W)$, $s\ge 0$, such that
$\pi_{k+s,0}(\widetilde{N}_0\sqcup \widetilde{N}_1)=\pi_{k,0}(N_0\sqcup
N_1)\equiv X\subset W$, where $\partial
\widetilde{V}=\widetilde{N}_0\bigcup
\widetilde{P}\bigcup\widetilde{N}_1$.

We call the set $\Omega[V]$ of such solutions $\widetilde{V}$ the
{\em full quantum situs} of $V$. We call also each element
$\widetilde{V}\in \Omega[V]$ a {\em quantum fluctuation} of
$V$.\footnote{Let us emphasize that to $\Omega[V]$ belong also (non
necessarily regular) solutions $V'\subset E_k$ such that $N_0'\sqcup
N_1'=N_0\sqcup N_1$, where $\partial V'=N_0'\bigcup P'\bigcup N_1'$.}
\end{definition}

\begin{definition}\label{infinitesimal}
We call {\em infinitesimal bordism} of a regular solution $V\subset
\hat E_k\subset J\hat D^k(W)$ an element $\widetilde{V}\in\Omega[V]$,
defined in the proof of Theorem \ref{deformation}. We denote by
$\Omega_0[V]\subset \Omega[V]$ the set of infinitesimal bordisms of
$V$. We call $\Omega_0[V]$ the {\em infinitesimal situs} of $V$.
\end{definition}

\begin{definition}\label{fun-stable}
Let $\hat E_k\subset \hat J^k_{m|n}(W)$, where $\pi:W\to M$ is a fiber bundle, in
the category of quantum smooth supermanifolds. We say that a regular solution
$V\subset \hat E_k$, $\partial V=N_0\bigcup P \bigcup N_1$, is {\em
functionally stable} if the infinitesimal situs $\Omega_0[
V]\subset\Omega[V]$ of $V$ does not contain singular infinitesimal
bordisms.
\end{definition}

\begin{theorem}\label{main}
Let $\hat E_k\subset \hat J^k_{m|n}(W)$, where $\pi:W\to M$ is a fiber bundle, in
the category of quantum smooth supermanifolds. If $\hat E_k$ is quantum formally integrable
and completely quantum superintegrable, then it is functionally stable as well as
Ulam-extended superstable.

A regular solution $V\subset \hat E_k$ is stable iff it is functionally
stable.
\end{theorem}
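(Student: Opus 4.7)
The plan is to split the statement into two independent assertions and treat them separately: (i) formal and complete quantum superintegrability imply functional stability together with Ulam-extended superstability of $\hat E_k$, and (ii) for a regular solution $V\subset \hat E_k$, classical stability (as a point of $\underline{Sol}(\hat E_k)$) coincides with functional stability (as in Definition \ref{fun-stable}). The linking device between the two parts is Theorem \ref{deformation}, which identifies integrable solutions of the linearized equation $\hat E_k[s]$ with the vertical flows that transport one regular solution to another, together with the crystal-bordism machinery of Theorem \ref{crystal-structure-quantum-super-pdes} that guarantees the existence of integral bordisms between compatible Cauchy hypersurfaces.

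For (i), I would start from an arbitrary compact regular solution $V\subset \hat E_k$ with $\partial V=N_0\cup P\cup N_1$, and invoke formal quantum integrability (to pass to the infinite prolongation $\hat E_{+\infty}$) together with complete quantum superintegrability (to ensure non-emptiness of $\underline{Sol}$ over any admissible Cauchy datum). The integral bordism group $\Omega^{\hat E_k}_{m-1|n-1}$ classifies the obstructions to building such a $\widetilde V$; since $[N_0\sqcup N_1]$ already bounds via $V$, it represents the zero class, so there exist quantum fluctuations $\widetilde V\subset \hat J^{k+s}_{m|n}(W)$ with $\pi_{k+s,0}(\widetilde N_0\sqcup\widetilde N_1)=\pi_{k,0}(N_0\sqcup N_1)$. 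This produces a non-empty $\Omega[V]$ for every $V$, which is precisely functional stability. Ulam-extended superstability then follows by the same argument applied to any $\hat E'_k$ close to $\hat E_k$ in the natural jet topology: formal integrability is an open condition on the symbol sequence, and completeness is preserved under sufficiently small perturbations, so the same bordism argument applies to the perturbed equation and compares solutions across the deformation.

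For (ii), assume first that $V=D^ks(M)$ is functionally stable. By Definition \ref{fun-stable}, every element of the infinitesimal situs $\Omega_0[V]\subset \Omega[V]$ is regular, hence of the form described in Theorem \ref{deformation}: each $\widetilde V\in\Omega_0[V]$ arises from an integrable solution $\nu:M\to s^*vTW$ of the linearized equation $\hat E_k[s]$, whose prolongation $D^k\nu$ extends to an integrable vertical symmetry $\zeta$ of $\hat E_k$ (using Lemma \ref{Horiz-symm} and formula \eqref{infty-contact-symmetries}). The flow of $\zeta$ then maps $V$ into $\widetilde V$ within $\underline{Sol}(\hat E_k)$, so a neighborhood of $V$ in $\underline{Sol}(\hat E_k)$ consists of solutions equivalent to $V$ via integrable vertical symmetries; this is exactly stability. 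Conversely, suppose $V$ is stable. Then every nearby $V'\in W_V\subset\underline{Sol}(\hat E_k)$ is obtained from $V$ by an integrable vertical symmetry, so every infinitesimal deformation is induced by an integrable $\nu$ solving $\hat E_k[s]$; consequently, no element of $\Omega_0[V]$ can be singular, giving functional stability.

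The main obstacle will be the converse in (ii): one must argue that if every sufficiently small perturbation is equivalent to $V$ by an integrable vertical symmetry, then no element of $\Omega_0[V]$ can be singular. The delicate point is ruling out infinitesimal bordisms that are limits of regular fluctuations but themselves acquire singular points of the type $\Sigma(\widetilde V)_S$ — this requires a careful use of the direct sum decomposition \eqref{splitting-quantum-Cartan-distribution} and the fact that the vertical Lie subalgebra $\mathfrak v_k(\rceil)\subset\mathfrak s(\widehat{\mathfrak H}_k(\rceil))$ acts on $\underline{Sol}(\hat E_k)$ by diffeomorphisms preserving regularity, so that the image of a regular solution under an integrable vertical flow cannot develop singularities. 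Granting this, the equivalence is clean, and both parts combine to give the theorem.
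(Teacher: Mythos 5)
Your treatment of the first claim (functional stability of $\hat E_k$) and of the equivalence ``stable $\Leftrightarrow$ functionally stable'' runs essentially along the paper's lines: the paper also passes to the prolongation $V^{(s)}\subset(\hat E_k)_{+s}$ and uses the surjection $\Omega^{(\hat E_k)_{+s}}_{m-1|n-1}\to\Omega_{m-1|n-1}((\hat E_k)_{+s})\to 0$ of the integral bordism group onto the quantum bordism group to produce the quantum solution $\widetilde V\subset\hat J^{k+s}_{m|n}(W)$ with $\pi_{k+s,0}(\widetilde N_0\sqcup\widetilde N_1)=\pi_{k,0}(N_0\sqcup N_1)$, and it identifies stability of $V=D^ks(M)$ with regularity of the infinitesimal situs via Theorem \ref{deformation}. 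The converse step you explicitly defer (``granting this'') is closed in the paper not by an argument about limits of regular fluctuations, but by the isomorphism $T_s\underline{Sol}(\hat E_k)\cong\Omega_0[V]$: since the neighborhood $W_s$ is open in $\underline{Sol}(\hat E_k)$, one has $T_{s'}W_s\cong T_s\underline{Sol}(\hat E_k)\cong\Omega_0[V]$ for every $s'\in W_s$, and the tangent vectors arising from an open set of regular, mutually equivalent solutions are regular; hence $\Omega_0[V]$ contains no singular infinitesimal bordism. You should make that identification explicit rather than appeal to the action of $\mathfrak v_k(\rceil)$ alone.

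The genuine gap is your argument for Ulam-extended superstability. You propose to perturb the equation $\hat E_k$ itself and assert that ``formal integrability is an open condition on the symbol sequence'' and that complete quantum superintegrability ``is preserved under sufficiently small perturbations''; neither claim is justified, neither is available anywhere in the paper, and neither is true in any generality (formal integrability encodes infinitely many compatibility conditions and is not stable under perturbing the equation). The paper's proof does not perturb the equation at all: it obtains Ulam-extended superstability from the short exact sequence $0\to\hat K^{\hat E_k}_{m-1|n-1,s}\to\Omega^{\hat E_k}_{m-1|n-1}\to\Omega^{\hat E_k}_{m-1|n-1,s}\to 0$ comparing the smooth and the singular integral bordism groups, whose geometric reading is that in the neighborhood of each smooth solution there are singular solutions; this is exactly the geometric translation of Ulam-extended superstability adopted in the paper (see the remark following Theorem \ref{main}). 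To repair your proof, delete the perturbation-of-the-equation argument and replace it with this comparison of $\Omega^{\hat E_k}_{m-1|n-1}$ with $\Omega^{\hat E_k}_{m-1|n-1,s}$, which is what formal and complete quantum superintegrability actually deliver.
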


\begin{proof}
In fact, if $\hat E_k$ is quantum formally integrable and completely quantum superintegrable,
we can consider, for any compact regular solution $V\subset \hat E_k$,
its $s$-th prolongation $V^{(s)}\subset (\hat E_k)_{+s}\subset
\hat J^{k+s}_{m|n}(W)$. Since one has the following short exact sequence

\begin{equation}
\xymatrix{\Omega_{m-1|n-1}^{(\hat E_k)_{+s}}\ar[r]&\Omega_{m-1|n-1}((\hat E_k)_{+s})\ar[r]&0\\}
\end{equation}

where $\Omega_{m-1|n-1}^{(\hat E_k)_{+s}}$, (resp.
$\Omega_{m-1|n-1}((\hat E_k)_{+s})$), is the integral bordism group,
(resp. quantum bordism group),\footnote{Here the considered bordism
groups are for admissible non-necessarily closed Cauchy
hypersurfaces.} we get that there exists a solution
$\widetilde{V}\subset \hat J^{k+s}_{m|n}(W)$ such that
\begin{equation}
\left\{\begin{array}{l}
\partial \widetilde{V}=\widetilde{N}_0\bigcup\widetilde{P}\bigcup\widetilde{N}_1;\quad
\partial V^{(s)}=N_0^{(s)}\bigcup P^{(s)}\bigcup N_1^{(s)}\\
\widetilde{N}_0=N_0^{(s)};\quad
\widetilde{N}_1=N_1^{(s)}.\\
\end{array}\right.
\end{equation}
Then, as a by-product we get also:
$\pi_{k+s,0}(\widetilde{N}_0\sqcup\widetilde{N}_1)=\pi_{k,0}(N_0\sqcup
N_1)\subset W$. Therefore, $\hat E_k$ is functionally stable.
Furthermore, $\hat E_k$ is also Ulam-extended superstable, since the
integral bordism group $\Omega_{m-1|n-1}^{\hat E_k}$ for smooth solutions and
the integral bordism group $\Omega_{m-1|n-1,s}^{\hat E_k}$ for singular
solutions, are related by the following short exact sequence:
\begin{equation}
\xymatrix{0\ar[r]&\hat K_{m-1|n-1,s}^{\hat
E_k}\ar[r]&\Omega_{m-1|n-1}^{\hat E_k}\ar[r]&
\Omega_{m-1|n-1,s}^{\hat E_k}\ar[r]&0.\\}
\end{equation}
This implies that in the neighborhood of each smooth solution there
are singular solutions.

Finally a regular solution $V\subset \hat E_k$ is stable iff the set
of solutions of the corresponding linearized equation $\hat E_k[V]$
does not contains singular solutions. But this is just the
requirement that $\Omega_0[V]$ does not contains singular solutions.
Therefore, $V$ is stable if it is functionally stable and vice
versa. More precisely if $f=D^ks:X\to \hat E_k$ is a stable solution
of $\hat E_k$, then there exists an open set $W_s\subset
\underline{Sol}(\hat E_k)$ such that for any $s'\in W_s$, $s'$ is
equivalent to $s$.\footnote{Recall that $Q_w^\infty(W)$ has a
natural structure of quantum smooth supermanifold modeled on locally
convex topological vector fields. $\underline{Sol}(\hat E_k)$ is a
closed submanifold of $\underline{Sol}(\hat E_k)\subset
Q_w^\infty(W)$. (For details see ref.\cite{PRA9}.)} Let us consider
the tangent space $T_s\underline{Sol}(\hat E_k)$. One has the
following isomorphism
\begin{equation}
T_s\underline{Sol}(\hat E_k)\cong\left\{\zeta\in
(Q_w^\infty)_0((D^ks)^*vT\hat E_k)\hskip 2pt|\hskip 2pt \exists \xi\in
T_sQ_w^\infty(W), \zeta=|_k\circ D^k\xi\right\}\cong\Omega_0[V]
\end{equation}
where $|_k$ is the canonical isomorphism $J\hat D^k(s^*vTW)\cong
(D^ks)^*vTJ\hat D^k(W)$, and $V=D^ks(X)\subset \hat E_k$. Since $W_s$ is
open in $\underline{Sol}(\hat E_k)$, one has also the following
isomorphism $T_{s'}W_s\cong T_s\underline{Sol}(\hat E_k)$. Thus also to
$s'$ there correspond vector fields $\zeta\in T_{s'}W_s$ that must
be regular ones, i.e., without singular points. Therefore
$\Omega_0[V]$ cannot contain singular solutions, hence $V$ is
functionally stable. Vice versa, if $V$ is functionally stable, then
we can find an open neighborhood $W_s\subset \underline{Sol}(\hat E_k)$
built by perturbing $V$ with all the flows induced by the regular
vector fields belonging to $\Omega_0[V]$. This set is an open set of
$W_s\subset\underline{Sol}(\hat E_k)$ since its tangent space at any of
its point $s'$ is isomorphic to $T_{s'}\underline{Sol}(\hat E_k)$, since
this last is isomorphic to $\Omega_0[V]$. Furthermore, any two of
such points of such an open set are equivalent since they can be
related both to $s$ by local diffeomorphisms. Therefore $V$ that is
functionally stable is also stable.
\end{proof}

\begin{remark}
Let us emphasize that  the definition of functionally stable quantum super PDE
interprets in pure geometric way the definition of Ulam superstable
functional equation just adapted to PDE's.\footnote{Let us recall the concept of Ulam stability.
Let $F$ be a functional
space, i.e., a space of suitable applications $f:X\to Y$ between
finite dimensional Riemannian manifolds $X$ and $Y$. Let $E$ be a
Banach space and $S$ a subset of $X^n$. Let us consider a functional
equation:
\begin{equation}\label{fueq}
G(f,q^1,\cdots,q^n)=0,\quad \forall (q^1,\cdots,q^n)\in S\subset X^n
\end{equation}
defined by means of a mapping $G:F\times X^n\to E$,
$(f,(q^1,\cdots,q^n))\mapsto G(f,(q^1,\cdots,q^n))$. We say that
such a functional equation is {\em Ulam-extended stable} if for any
function $\widetilde{f}\in F$, satisfying the inequality
\begin{equation}\label{fueq-ex}
\|G(\widetilde{f},(q^1,\cdots,q^n))\|\le\varphi(q^1,\cdots,q^n),\quad
\forall (q^1,\cdots,q^n)\in X^n\end{equation} with
$\varphi:X^n\to[0,\infty)$ fixed, there exists a solution $f$ of
{\em \ref{fueq}} such that $d_Y(\widetilde{f}(q),f(q))\le\Phi(q)$,
$\forall q\in X$, for suitable $\Phi:X\to[0,\infty)$. Here
$d_Y(.,.)$ is the metric induced by the Riemannian structure of
            $Y$. More precisely,
$d_Y(a,b)=inf\int_{[0,1]}\sqrt{g_Y(\dot\gamma(t),\dot\gamma(t))}dt$,
            $\forall a,b\in Y$, where $\gamma\in C^1([0,1],Y)$,
            with $\gamma(0)=a$, $\scriptstyle \gamma(1)=b$ and $g_Y$
            the Riemannian metric on $Y$. One has the following important propositions:
            (i) If $\gamma\in C^1([0,1],Y)$, such that $\gamma(0)=a$,
            $\gamma(1)=b$, and
            $\int_{[0,1]}\sqrt{g_Y(\dot\gamma(t),\dot\gamma(t))}dt=d_Y(a,b)$,
            then $\gamma([0,1])\subset Y$ is a geodesic.
            If in addition, $\gamma$ has constant speed, then $\gamma$
            is a $C^\infty$ geodesic.
            (ii)(de Rham) When $Y$ is geodesically complete, then given $a,b\in Y$,
            there exists at least one geodesic $\gamma$ in $Y$ connecting
            $a$ to $b$ with
            $\int_{[0,1]}\sqrt{g_Y(\dot\gamma(t),\dot\gamma(t))}dt=d_Y(a,b)$.
            (iii)(Hopf, Rinov) The geodesic completeness of $Y$ is equivalent to the completeness
            of $Y$ as a metric space, which is equivalent to the statement that a subset of
            $Y$ is compact iff it is closed and bounded.
            (iv) $Y$ is complete whenever it is compact. If each solution $\widetilde{f}\in F$ of the inequality
            (\ref{fueq-ex})
is either a solution of the functional equation (\ref{fueq}) or
satisfies some stronger conditions, then we say that equation (
\ref{fueq}) is {\em Ulam-extended-superstable}.}
\end{remark}

\begin{definition}
We say that $\hat E_k\subset J\hat D^k(W)$ is a {\em stable extended
crystal quantum super PDE} if it is an extended crystal quantum super PDE that is functionally
stable and all its regular quantum smooth solutions are (functionally)
stable.
\end{definition}

\begin{definition}
We say that $\hat E_k\subset J\hat D^k(W)$ is a {\em stabilizable
extended crystal quantum super PDE} if it is an extended crystal quantum super PDE and to $\hat E_k$
can be canonically associated a stable extended crystal quantum super PDE
${}^{(S)}\hat E_k\subset J\hat D^{k+s}(W)$. We call ${}^{(S)}\hat E_k$ just
the {\em stable extended crystal quantum super PDE of} $\hat E_k$.
\end{definition}

We have the following criteria for functional stability of solutions
of qunatum super PDE's and to identify stable extended crystal quantum super PDE's.
\begin{theorem}{\em(Functional stability criteria).}\label{criteria-fun-stab}
Let $\hat E_k\subset J\hat D^k(W)$ be a $k$-order quantum formally integrable
and completely quantum superintegrable quantum super PDE on the fiber bundle $\pi:W\to M$.

{\em 1)} If the symbol $\hat g_k=0$, then all the quantum smooth regular
solutions $V\subset \hat E_k\subset J\hat D^k(W)$ are functionally
stable, with respect to any non-weak perturbation. So $\hat E_k$ is a
stable extended crystal.

{\em 2)} If $\hat E_k$ is of finite type, i.e., $\hat g_{k+r}=0$,
for $r>0$, then all the quantum smooth regular solutions $V\subset
\hat E_{k+r}\subset J\hat D^{k+r}(W)$ are functionally stable, with
respect to any non-weak perturbation. So $\hat E_k$ is a
stabilizable extended crystal with stable extended crystal
${}^{(S)}\hat E_k=\hat E_{k+r}$.

{\em 3)} If $V\subset(\hat E_k)_{+\infty}\subset J\hat D^\infty(W)$ is a
smooth regular solution, then $V$ is functionally stable, with
respect to any non-weak perturbation. So any quantum formally integrable end
completely quantum superintegrable quantum super PDE $\hat E_k\subset J\hat D^k(W)$, is a
stabilizable extended crystal, with stable extended crystal
${}^{(S)}\hat E_k=(\hat E_k)_{+\infty}$.
\end{theorem}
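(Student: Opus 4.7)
The plan is to reduce all three assertions to Theorem \ref{main}, whose proof already identifies functional stability of a regular solution $V$ with the absence of singular elements in the infinitesimal situs $\Omega_0[V]$. The question therefore becomes: when does vanishing of the symbol force every integrable solution of the linearized equation $\hat E_k[s]\subset J\hat D^k(\hat E[s])$ to be regular, i.e.\ free of the ``blow-up'' singular points whose appearance in an infinitesimal bordism was identified in the Introduction as the geometric signature of instability?

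For part (1), first I would verify that the symbol of the linearized equation $\hat E_k[s]$ coincides, via pull-back along $D^ks$, with the symbol $\hat g_k$ of $\hat E_k$; this is immediate from the intrinsic construction of the linearization in the category $\mathfrak{Q}_S$. Hence $\hat g_k=0$ forces $\hat E_k[s]$ to have trivial top-order Spencer symbol: any integrable solution $\nu:M\to\hat E[s]$ of the linearized equation is determined uniquely by its $(k-1)$-jet, and the associated $k$-prolongation $D^k\nu$ identifies, via the vertical part of the canonical decomposition $\mathfrak{s}(\mathfrak{H}_k(\rceil))\cong \mathfrak{d}(\mathbf{H}_k(\rceil))\oplus\mathfrak{v}_k(\rceil)$ in Lemma \ref{Horiz-symm}, a smooth vertical symmetry $\zeta$ of $\hat E_k$. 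By Theorem \ref{deformation}, the flow of $\zeta$ deforms the regular solution $V$ into a family of quantum fluctuations $\widetilde V\subset\hat E_k$ that remain regular, so $\Omega_0[V]$ contains no singular infinitesimal bordism. Since this works for any non-weak perturbation (weak perturbations would allow discontinuities in $\Sigma(V)_S$, violating the hypothesis that $\nu$ is a smooth integrable section), $V$ is functionally stable in the sense of Definition \ref{fun-stable}. Combined with the functional stability of $\hat E_k$ itself given by Theorem \ref{main}, we conclude that $\hat E_k$ is a stable extended crystal.

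For part (2), I would pass to the $r$-th prolongation $\hat E_{k+r}\subset \hat J^{k+r}_{m|n}(W)$, which inherits formal integrability and complete quantum superintegrability from $\hat E_k$ by the quantum analogue of the Goldschmidt criterion used in \cite{PRA21, PRA22}. The hypothesis $\hat g_{k+r}=0$ means part (1) applies to $\hat E_{k+r}$. Because regular smooth solutions $V\subset\hat E_k$ lift canonically to regular smooth solutions $V^{(r)}\subset\hat E_{k+r}$ and conversely project down, the declaration ${}^{(S)}\hat E_k:=\hat E_{k+r}$ fulfils the definition of a stable extended crystal associated to $\hat E_k$. Part (3) is handled by passing to the limit: on $(\hat E_k)_{+\infty}\subset J\hat D^\infty(W)$ the Cartan distribution is involutive and of dimension $(m|n)$, so the infinite-order symbol vanishes in the sense that every integrable solution of the linearized equation is rigidly determined by its finite-jet data. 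The argument of part (1) then gives functional stability of every regular smooth $V\subset (\hat E_k)_{+\infty}$, and ${}^{(S)}\hat E_k:=(\hat E_k)_{+\infty}$ provides the required stable extended crystal structure attached to any formally and completely quantum superintegrable $\hat E_k$.

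The main obstacle is the rigorous verification that vanishing of $\hat g_k$ really does guarantee that every integrable solution of $\hat E_k[s]$ prolongs to an honest smooth vertical symmetry of $\hat E_k$, i.e.\ that the coupling conditions \eqref{cond-symmetries-horizontal-k-contact-ideal} between the generating functions $Y^j$ and the connection coefficients $\rceil^j_{\alpha_1\dots\alpha_k}$ have no obstruction. In the commutative case this is a linear-algebraic consequence of the Spencer $\delta$-Poincar\'e lemma applied to the trivial symbol; in the quantum super setting one must additionally keep track of the $Z$-module action of $Z(A)$ on $E$ and the noncommutativity of the product in $\widehat{A}$, so the relevant Spencer complex takes values in $Hom_Z$-modules rather than free tensor products. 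Once this functoriality is in place (as implicit in the prolongation construction of \cite{PRA21, PRA22, PRA31}), the three statements follow uniformly from Theorems \ref{deformation} and \ref{main} by prolongation and passage to the infinite limit.
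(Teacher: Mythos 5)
Your proposal follows essentially the same route as the paper: both reduce the three statements to Theorem \ref{main} by observing that $\hat g_k=0$ forces the linearized equation $\hat E_k[s]$ (and, by compatibility of linearization with prolongation, all its prolongations) to have zero symbol, which excludes singular non-weak infinitesimal bordisms in $\Omega_0[V]$, and then handle parts (2) and (3) by passing to $\hat E_{k+r}$ and $(\hat E_k)_{+\infty}$ respectively. The paper packages the two facts you flag as the ``main obstacle'' into its Lemmas \ref{eq-prolongations} ($(\hat E_k[s])_{+h}\cong((\hat E_k)_{+h})[s]$) and \ref{symbol-prolongations} (zero symbol persists under prolongation), so your argument is correct and matches the intended proof.
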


\begin{proof}
We shall use the following lemmas.

\begin{lemma}\label{eq-prolongations}
Let $\hat E_k\subset J\hat D^k(W)$ be a quantum formally integrable and
completely quantum superintegrable quantum superPDE the fiber bundle $\pi:W\to M$. Then for
any quantum smooth regular solution $s:M\to W$, one has the following
canonical isomorphism: $(\hat E_k[s])_{+h}\cong((\hat E_k)_{+h})[s]$, $\forall
h\ge 1, \infty$.
\end{lemma}

\begin{proof}
In fact one has the following commutative diagram.

\begin{equation}\label{prolongations}
\left\{\begin{array}{ll}
  (\hat E_k[s])_{+h}& =J\hat D^h((D^ks)^*vT\hat E_k)\bigcap J\hat D^{k+h}(s^*vTW) \\
  & \cong (D^{k+h}s)^*vTJ\hat D^{h}(\hat E_k)\bigcap(D^{k+h}s)^*vTJ\hat D^{k+h}(W)\\
& \cong (D^{k+h}s)^*vT\left(J\hat D^{h}(\hat E_k)\bigcap J\hat D^{k+h}(W)\right)\\
&\cong(D^{k+h}s)^*vT((\hat E_k)_{+h})=((\hat E_{k})_{+h})[s].\\
\end{array}
\right.
\end{equation}

\end{proof}

\begin{lemma}\label{symbol-prolongations}
Let $\hat E_k\subset J\hat D^k(W)$ be a formally integrable and
completely integrable PDE the fiber bundle $\pi:W\to M$. Let $\hat
g_k=0$. Then also the prolonged equations  $(\hat E_k)_{+r}$,
$\forall r\ge 1, \infty$, have their symbols zero: $(\hat
g_k)_{+r}=0$, $\forall r\ge 1, \infty$.
\end{lemma}

\begin{proof}
In fact, from the definition of symbol and prolonged symbols, it
follows that the prolonged symbols coincide with the symbols of the
corresponding prolonged equations.
\end{proof}

1) This follows from Lemma \ref{eq-prolongations} and from the fact
that if $\hat g_k=0$ is also $\hat g_k[s]=0$. This excludes that $\hat E_k[s]$
could have singular solutions. Furthermore, Lemma
\ref{symbol-prolongations} excludes also that there are singular
(nonweak) solutions in the prolonged equations $\hat E_k[s]_{+r}$,
$\forall r\ge 1, \infty$.

2) If $\hat E_k$ is of finite type, with $\hat g_{k+r}=0$, then it is also
$\hat g_{k+r}[s]=0$. Then $\hat E_{k+r}[s]$ cannot have singular (nonweak)
solutions.

3) $\hat E_\infty$ has zero symbol, hence also $\hat E_\infty[s]$ has zero
symbol and cannot have singular (nonweak) solutions.

(So the proof follows the same lines drawn for commutative PDE's.)
\end{proof}

\begin{theorem}{\em(Functional stable solutions and $(k+1)$-connections).}\label{Criterion-fun-stable-sol-conn}
Let $\hat E_k\subset \hat J^k_{m|n}(W)$ be a quantum formally
integrable and completely quantum superintegrable quantum super PDE.
Let $\rceil$ be a quantum flat $(k+1)$-connection, such that
$\rceil|_{\hat E_k}$ is a $Q^\infty_w$-section of the affine fiber
bundle $\pi_{k+1,k}:(\hat E_k)_{+1}\to \hat E_k$ . Then, the
sub-equation ${}^{\rceil}E_k\subset \hat E_k$ identified, by means
of the ideal $\mathfrak{H}(\rceil)|_{\hat E_k}$, is formally
integrable and completely quantum superintegrable sub-equation with
zero symbol ${}^{\rceil}\hat g_k$. Then ${}^{\rceil}\hat E_k\subset
\hat E_k$ is functionally stable and Ulam-extended superstable.
Furthermore any regular quantum smooth solution $V\subset
{}^{\rceil}\hat E_k$ is also functionally stable in ${}^{\rceil}\hat
E_k$, with respect to any non weak perturbation.
\end{theorem}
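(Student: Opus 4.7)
The plan is to combine the explicit description of the horizontal distribution associated to a flat $(k+1)$-connection provided by Lemma \ref{Horiz-symm} with the functional stability criterion of Theorem \ref{criteria-fun-stab} and the extended Ulam superstability of Theorem \ref{main}. First I unpack the hypothesis. The condition that $\rceil|_{\hat E_k}$ be a $Q^\infty_w$-section of $\pi_{k+1,k}:(\hat E_k)_{+1}\to \hat E_k$ means precisely that the image $\rceil(\hat E_k)\subset (\hat E_k)_{+1}$ is $Q^\infty_w$-diffeomorphic to $\hat E_k$, and locally is cut out inside $(\hat E_k)_{+1}$ by the relations $y^j_{\alpha_1\dots\alpha_k\beta}=\rceil^j_{\alpha_1\dots\alpha_k\beta}$. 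The sub-equation ${}^{\rceil}\hat E_k\subset \hat E_k$ is then characterized, via Lemma \ref{Horiz-symm}, by the requirement that its regular solutions be exactly the $(m|n)$-dimensional integral manifolds of the horizontal distribution $\mathbf{H}_k(\rceil)|_{\hat E_k}$. The flatness of $\rceil$ tells us, by the same lemma, that the ideal $\mathfrak{H}_k(\rceil)$ is closed and equivalently that $\mathbf{H}_k(\rceil)$ is involutive, so Frobenius produces an $(m|n)$-dimensional integral leaf through every point of ${}^{\rceil}\hat E_k$.

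Next I verify the three structural claims in order. Complete quantum superintegrability follows at once because through every point of ${}^{\rceil}\hat E_k$ there is such a leaf, yielding an admissible Cauchy datum that extends to a global regular solution. Formal integrability is obtained by iterating $\rceil$ to produce a distinguished compatible section of each higher prolongation $(\hat E_k)_{+r}\to \hat E_k$; involutivity of $\mathbf{H}_k(\rceil)$ is exactly what is needed to kill the Spencer-cohomological obstructions at each step, so the maps ${}^{\rceil}\hat E_{k+r+1}\to {}^{\rceil}\hat E_{k+r}$ are surjective. The symbol ${}^{\rceil}\hat g_k$ vanishes because in the local form $\hat H^j_{\alpha_1\dots\alpha_k}=dy^j_{\alpha_1\dots\alpha_k}-\rceil^j_{\alpha_1\dots\alpha_k\beta}dx^\beta$ every top-order jet coordinate is fixed by $\rceil$, eliminating all freedom in the highest derivatives of a solution.

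With these three properties established, Theorem \ref{criteria-fun-stab}(1) applied to ${}^{\rceil}\hat E_k$ gives functional stability of the sub-equation and, simultaneously, functional stability of each of its regular quantum smooth solutions against non-weak perturbations. Extended Ulam superstability of ${}^{\rceil}\hat E_k$ then follows from Theorem \ref{main}, through the short exact sequence relating the integral bordism groups $\Omega_{m-1|n-1}^{{}^{\rceil}\hat E_k}$ and $\Omega_{m-1|n-1,s}^{{}^{\rceil}\hat E_k}$ of smooth and singular solutions.

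The main obstacle is the formal integrability step. Flatness of $\rceil$ guarantees Frobenius integrability globally in $\hat J^k_{m|n}(W)$, but to conclude formal integrability of ${}^{\rceil}\hat E_k$ one must check that the horizontal foliation remains inside every higher prolongation $(\hat E_k)_{+r}$, not merely $(\hat E_k)_{+1}$. This is where the hypothesis $\rceil(\hat E_k)\subset (\hat E_k)_{+1}$, combined with the formal integrability of $\hat E_k$ itself, must be bootstrapped to all orders, and where the Spencer-cohomological bookkeeping in the quantum super setting demands the most care in the full write-up.
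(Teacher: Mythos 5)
Your argument follows essentially the same route as the paper: the paper's proof consists precisely of the exact commutative diagram of integral versus quantum bordism groups for $(\hat E_k)_{+s}$ and $({}^{\rceil}\hat E_k)_{+s}$ (the mechanism of Theorem \ref{main}, which you invoke), together with the observation that ${}^{\rceil}\hat g_k=0$ makes ${}^{\rceil}\hat E_k$ of finite type so that Theorem \ref{criteria-fun-stab} yields functional stability of its regular smooth solutions. Your additional scaffolding on the structural claims (Frobenius via flatness, vanishing symbol from the fixed top-order jets, the prolongation bootstrap you flag as delicate) is consistent with, and more detailed than, what the paper records, which simply asserts those properties.
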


\begin{proof}
In fact, one has the following commutative diagram of exact lines.

\begin{equation}
\xymatrix{\Omega_{m-1|n-1}^{(\hat E_k)_{+s}}\ar[d]
\ar[r]&\Omega_{m-1|n-1}((\hat E_k)_{+s})\ar[d]
\ar[r]&0\\
\Omega_{m-1|n-1}^{({}^{\rceil}\hat E_k)_{+s}}\ar[d]
\ar[r]&\Omega_{m-1|n-1}(({}^{\rceil}\hat E_k)_{+s})\ar[d]
\ar[r]&0\\
0&0&\\}
\end{equation}

Furthermore, since ${}^{\rceil}\hat g_k=0$, ${}^{\rceil}\hat E_k$ is of finite
type, hence its smooth regular solutions are functionally stable.
\end{proof}

Taking into account the meaning that connections assume in any
physical theory, we can give the following definition.

\begin{definition}
Let $\hat E_k\subset \hat J^k_{m|n}(W)$ be a quantum formally integrable and completely
quantum superintegrable quantum super PDE. Let $\rceil$ be a flat quantum $(k+1)$-connection, such that
$\rceil|_{\hat E_k}$ is a $Q^\infty_w$-section of the affine fiber bundle
$\pi_{k+1,k}:(\hat E_k)_{+1}\to \hat E_k$ . We call the couple $(\hat E_k,\rceil)$
a {\em polarized quantum super PDE}. We call also {\em polarized quantum super PDE}, a couple
$(\hat E_k,{}^{\rceil}\hat E_k)$, where ${}^{\rceil}\hat E_k\subset \hat E_k$, is
defined in Theorem \ref{Criterion-fun-stable-sol-conn}. We call
${}^{\rceil}\hat E_k$ a {\em polarization} of $\hat E_k$.
\end{definition}

\begin{cor}
Any quantum smooth regular solutions of a polarization of a polarized couple
$(\hat E_k,{}^{\rceil}\hat E_k)$, is functionally stable, with respect to any
non-weak perturbation.
\end{cor}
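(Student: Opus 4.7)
The plan is to observe that the corollary is essentially a re-packaging, in the polarization language just introduced, of the last sentence of Theorem \ref{Criterion-fun-stable-sol-conn} combined with part (1) of Theorem \ref{criteria-fun-stab}. So the proof will be a short chain of invocations rather than a fresh argument. First I would unwind the definitions: by hypothesis $(\hat E_k,{}^{\rceil}\hat E_k)$ is a polarized couple, which by the preceding definition means that $\rceil$ is a flat quantum $(k+1)$-connection whose restriction to $\hat E_k$ is a $Q^\infty_w$-section of $\pi_{k+1,k}\colon (\hat E_k)_{+1}\to \hat E_k$, and the polarization ${}^{\rceil}\hat E_k\subset \hat E_k$ is the sub-equation cut out by the ideal $\mathfrak{H}(\rceil)|_{\hat E_k}$.

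Next I would invoke Theorem \ref{Criterion-fun-stable-sol-conn} to transfer structure from $(\hat E_k,\rceil)$ to ${}^{\rceil}\hat E_k$: this theorem asserts that ${}^{\rceil}\hat E_k$ is quantum formally integrable and completely quantum superintegrable, and crucially that its symbol vanishes, ${}^{\rceil}\hat g_k=0$. This is the decisive structural fact, because it puts ${}^{\rceil}\hat E_k$ into the hypothesis class of Theorem \ref{criteria-fun-stab}(1). At this point I would apply Theorem \ref{criteria-fun-stab}(1) directly to ${}^{\rceil}\hat E_k$: a formally integrable, completely quantum superintegrable quantum super PDE with vanishing symbol has the property that every quantum smooth regular solution is functionally stable against any non-weak perturbation. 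Combined with Lemma \ref{eq-prolongations} and Lemma \ref{symbol-prolongations}, which guarantee that the vanishing of the symbol is inherited by $({}^{\rceil}\hat E_k)_{+r}[s]$ for every $r\ge 1$, this rules out singular non-weak infinitesimal bordisms in $\Omega_0[V]$ for any regular smooth solution $V\subset {}^{\rceil}\hat E_k$.

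Finally, by Theorem \ref{main}, functional stability of $V$ is equivalent to the stability of $V$ as a solution, so the conclusion of the corollary follows. There is no genuine obstacle here: the only point that could require a moment's care is checking that ``functional stability inside ${}^{\rceil}\hat E_k$'' is the intended meaning of the corollary's phrase ``functionally stable'' for solutions of the polarization, but since the polarization is itself a formally integrable completely superintegrable quantum super PDE in its own right, Definitions \ref{fun-stable-PDE}--\ref{fun-stable} apply to it verbatim and this ambiguity disappears. Thus the proof reduces to the two-line citation: apply Theorem \ref{Criterion-fun-stable-sol-conn} to obtain ${}^{\rceil}\hat g_k=0$, then apply Theorem \ref{criteria-fun-stab}(1) to conclude.
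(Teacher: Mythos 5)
Your proposal is correct and follows essentially the same route as the paper: the corollary is just the final assertion of Theorem \ref{Criterion-fun-stable-sol-conn} restated in the polarization language, and that assertion is itself obtained, exactly as you do, from the vanishing symbol ${}^{\rceil}\hat g_k=0$ via the finite-type criterion of Theorem \ref{criteria-fun-stab}. The closing appeal to Theorem \ref{main} is harmless but unnecessary, since the corollary claims only functional stability.
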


\begin{theorem}{\em(Finite stable extended crystal PDE)}\label{finite-stable-extended-crystal-PDE}
Let $\hat E_k\subset J\hat D^k(W)$ be a quantum formally integrable and
completely quantum superintegrable quantum super PDE, such that the centre $Z(A)$ of the quantum superalgebra $A$, model for $M$, is Noetherian. Then, under suitable {\em finite
ellipticity conditions}, there exists a stable extended crystal quantum super PDE
${}^{(S)}\hat E_k$ canonically associated to $\hat E_k$, i.e., $\hat E_k$ is a
stabilizable extended crystal.
\end{theorem}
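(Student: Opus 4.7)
The plan is to exhibit $ {}^{(S)}\hat E_k $ as a sufficiently high finite prolongation $(\hat E_k)_{+r}$ and then invoke Theorem \ref{criteria-fun-stab}(2) to conclude stability. First, since $\hat E_k$ is already formally quantum integrable and completely quantum superintegrable, Theorem \ref{crystal-structure-quantum-super-pdes} supplies the crystal structure: the integral bordism group $\Omega^{\hat E_k}_{m-1|n-1}$ is an extension of a crystallographic subgroup $G\triangleleft G(d)$. This same structure is inherited by every prolongation $(\hat E_k)_{+r}$, because the projection $\pi_{k+r,k}:(\hat E_k)_{+r}\to\hat E_k$ induces a morphism on integral bordism groups compatible with the crystallographic extension in diagram \eqref{commutative-exact-diagram-relation-integral-bord-classic-limit-total-integral-bord}. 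So whichever prolongation we choose will still be an extended crystal quantum super PDE.

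The core of the argument is then to force the symbol to vanish after finitely many prolongations. I would encode the \emph{finite ellipticity condition} as the requirement that for each $q\in\hat E_k$ the symbol complex $\{\hat g_{k+r}(q)\}_{r\ge 0}$ be a descending chain of finitely generated $Z(A)$-submodules of a free module of finite rank (the appropriate quantum super Spencer module). Given such a chain, the Noetherian hypothesis on $Z=Z(A)$ guarantees that the chain stabilizes at some finite step $r_0$, and under ellipticity the stabilized module is forced to be zero — this is the quantum-super analogue of the classical Cartan--Kuranishi/Quillen statement that an elliptic system of finite type has vanishing symbol after finitely many prolongations. By Lemma \ref{symbol-prolongations} (translated to the quantum super setting, which is a formal consequence of the definitions of symbol and of prolongation), this gives $\hat g_{k+r_0+j}=0$ for all $j\ge 0$.

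Set ${}^{(S)}\hat E_k\equiv (\hat E_k)_{+r_0}\subset J\hat D^{k+r_0}(W)$. Formal quantum integrability and complete quantum superintegrability pass to prolongations, so ${}^{(S)}\hat E_k$ satisfies the hypotheses of Theorem \ref{criteria-fun-stab}(2). Since its symbol is zero, every regular quantum smooth solution $V\subset {}^{(S)}\hat E_k$ is functionally stable under non-weak perturbations, and ${}^{(S)}\hat E_k$ itself is functionally stable by Theorem \ref{main}; moreover, by Lemma \ref{eq-prolongations}, the linearized equation at any regular solution agrees with the prolongation of the linearization, so no spurious singular perturbations appear. Regular quantum smooth solutions of $\hat E_k$ correspond bijectively to those of ${}^{(S)}\hat E_k$ via $k$-prolongation, justifying the word "canonically" in the statement. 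This shows that $\hat E_k$ is a stabilizable extended crystal with stable extended crystal PDE ${}^{(S)}\hat E_k$.

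The main obstacle I expect is the second step: precisely formulating the "finite ellipticity conditions" in $\mathfrak{Q}_S$ so that Noetherianity of $Z(A)$ genuinely implies termination of the symbol chain. In the commutative setting one uses coherence of the polynomial ring acting on the symbol modules; in the quantum super setting one must replace this by the graded $Z$-module structure on the Spencer $\delta$-cohomology of $\hat g_{k+r}$ and verify that ellipticity (non-vanishing of the principal symbol off the zero section) still produces a strictly descending chain until the zero module is reached. Once this algebraic finiteness is in place, the remainder of the proof is a direct assembly of Theorems \ref{crystal-structure-quantum-super-pdes}, \ref{criteria-fun-stab} and \ref{main}.
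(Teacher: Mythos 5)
There is a genuine gap in the central step of your argument. You reduce the theorem to Theorem \ref{criteria-fun-stab}(2) by claiming that the finite ellipticity conditions, together with Noetherianity of $Z(A)$, force the prolonged symbols to vanish after finitely many steps, i.e.\ that $\hat E_k$ becomes of finite type. This is not what the hypothesis can deliver, for two reasons. First, the algebra is wrong: a Noetherian ring gives the \emph{ascending} chain condition, not termination of descending chains, and in any case the prolonged symbols $\hat g_{k+r}$ are not a descending chain of submodules of one fixed module, so ``Noetherian $\Rightarrow$ the chain stabilizes'' does not apply as stated. Second, the geometry is wrong: ellipticity does not imply finite type even in the commutative case --- the Laplace equation is elliptic and its prolonged symbols (harmonic polynomials of each degree) never vanish. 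If your reading of ``finite ellipticity conditions'' were correct, the theorem would collapse to the finite-type case already covered by Theorem \ref{criteria-fun-stab}(2) and would exclude precisely the elliptic equations it is meant to address.

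The paper's own proof takes a different and weaker route: it invokes Lemma \ref{finite-stability-criterion} (finite stability criterion), whose content is that, with $Z(A)$ Noetherian and under the finite ellipticity conditions, there is a finite $s_0$ such that every regular quantum smooth solution of $(\hat E_k)_{+s_0}$ is functionally stable --- the finiteness of $s_0$ comes from a Cartan--Kuranishi/$\delta$-regularity type stabilization made possible by Noetherianity (the proof is deferred to the commutative case), and the ellipticity conditions are what exclude singular non-weak perturbations in the prolonged linearized equations, \emph{without} the symbol having to vanish. The theorem then sets ${}^{(S)}\hat E_k=(\hat E_k)_{+s_0}$ and adds the observation (which you do make) that regular smooth solutions of $\hat E_k$ and of $(\hat E_k)_{+s_0}$ coincide, while their stability must be assessed in the prolonged equation, where singular perturbations are forbidden. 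So the part of your argument assembling Theorems \ref{crystal-structure-quantum-super-pdes}, \ref{criteria-fun-stab} and \ref{main} is fine, but the bridge from ``finite ellipticity $+$ Noetherian centre'' to stability must go through a statement of the type of Lemma \ref{finite-stability-criterion}, not through an alleged vanishing of the symbol after finitely many prolongations.
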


\begin{proof}
In fact, we can use the following lemma.

\begin{lemma}{\em(Finite stability criterion).}\label{finite-stability-criterion}
Let $\hat E_k\subset J\hat D^k(W)$ be a quantum formally integrable and
completely quantum superintegrable quantum super PDE, such that the centre $Z(A)$ of the quantum superalgebra $A$, model for $M$, is Noetherian. Then there exists an integer $s_0$ such
that, under suitable {\em finite ellipticity conditions}, any
regular quantum smooth solution $V\subset (\hat E_k)_{+s_0}$ is functionally
stable.
\end{lemma}

\begin{proof}
Under the hypotheses that $Z(A)$ is Noetherian, the proof follows the same line of the commutative case.
\end{proof}

Let us, now, use the hypothesis that $\hat E_k$ is quantum formally integrable
and completely quantum superintegrable. Then all its regular quantum smooth solutions are
all that of $(\hat E_k)_{+s_0}$. In fact, these are all the solutions of
$(\hat E_k)_{+\infty}\subset J\hat D^\infty(W)$. However, even if a
smooth regular solution $V\subset \hat E_k$, and their
$s_0$-prolongations, $V^{(s_0)}\subset (\hat E_k)_{+s_0}$, are equivalent
as solutions, they cannot be considered equivalent from the
stability point of view !!! In fact, $\hat E_k$ can admit singular
solutions, instead for $(\hat E_k)_{+s_0}$ these are forbidden.
Therefore, for $\hat E_k[s]$ singular perturbations are possible, i.e.
are possible infinitesimal vertical symmetries of $\hat E_k$, in a
neighborhood of the solution $s$, having singular points. Instead
for $(\hat E_k)_{+s_0}[s]$ all solutions are without singular points,
hence $s$ considered as solution of  $(\hat E_k)_{+s_0}$ necessitates to
be functionally stable.

By conclusions, $\hat E_k$, under the finite ellipticity conditions is a
stabilizable extended crystal quantum super PDE, and its stable extended crystal
quantum super PDE is ${}^{(S)}\hat E_k=(\hat E_k)_{+s_0}$, for a suitable finite number
$s_0$.
\end{proof}

\begin{remark}
With respect to a quantum frame \cite{PRA15, PRA21, PRA22}, we can
consider the perturbation behaviours of global solutions for
$t\to\infty$, where $t$ is the proper time of the quantum frame.
Then, we can talk about asymptotic stability by reproducing similar
situations for commutative PDE's. (See Refs.\cite{PRA24, PRA29}.) In
particular we can consider the concept of ''averaged stability''
also for solutions of quantum (super) PDE's. With this respect, let
us recall the following definition and properties of quantum
(pseudo)Riemannian supermanifold given in \cite{PRA15, PRA23}.
\end{remark}

\begin{definition}\cite{PRA15, PRA23}
A {\em quantum (pseudo)Riemannian supermanifold} $(M.\widehat{A})$
is a quantum supermanifold $M$ of dimension $(m|n)$ over a quantum
superalgebra $A$, endowed with a $Q^\infty_w$ section
$\widehat{g}:M\to Hom_Z(TM\otimes_ZTM;A)$ such that the induced
homomorphisms $T_pM\to(T_pM)^+$, $\forall p\in M$, are
injective.\end{definition}

\begin{proposition}\cite{PRA15, PRA23}
In quantum coordinates $\widehat{g}(p)$ is represented by a matrix
$\widehat{g}_{\alpha\beta}(p)\in\mathop{\widehat{A}}\limits^2{}_{00}(A)\times\mathop{\widehat{A}}
\limits^2{}_{10}(A)\times\mathop{\widehat{A}}\limits^2{}_{01}(A)\times\mathop{\widehat{A}}\limits^2{}_{11}(A)$.
The corresponding dual quantum metric gives
$\widehat{g}^{\alpha\beta}(p)\in\mathop{\widehat{A}}\limits^2{}^{00}(A)\times\mathop{\widehat{A}}\limits^2{}^{10}(A)\times\mathop{\widehat{A}}\limits^2{}^{01}(A)\times\mathop{\widehat{A}}\limits^2{}^{11}(A)$,
with $\mathop{\widehat{A}}\limits^2{}^{ij}(A)\equiv
Hom_Z(A;A_i\otimes_ZA_j)$, $i,j\in{\mathbb Z}_2$, such that
$\widehat{g}_{\gamma\beta}(p)\widehat{g}^{\alpha\beta}(p)=\delta^\alpha_\gamma\in\widehat{A}$,
$\widehat{g}^{\alpha\beta}(p)\widehat{g}_{\gamma\beta}(p)=\delta^\alpha_\gamma\in
Hom_Z(A\otimes_ZA;A\otimes_ZA)$.\end{proposition}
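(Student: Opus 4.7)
The plan is to work locally, unpack the structure of $\widehat{g}$ in a quantum coordinate chart, and then exploit the injectivity hypothesis to invert. First I would fix quantum coordinates $(x^\alpha)_{1\le\alpha\le m+n}$ on an open $U\subset M$ split by the $\mathbb{Z}_2$-grading (first $m$ even, last $n$ odd), producing a local basis $(\partial x_\alpha)$ of $TM$ with parity $|\alpha|\in\{0,1\}$ and the dual basis $(dx^\alpha)$. Since $\widehat g(p):TM\otimes_Z TM\to A$ is $Z$-linear, evaluation on basis vectors yields components $\widehat g_{\alpha\beta}(p)\equiv\widehat g(p)(\partial x_\alpha\otimes\partial x_\beta)\in\mathop{\widehat A}\limits^2(A)$; the parity splitting of the index pair $(\alpha,\beta)$ into the four types $(00),(10),(01),(11)$ gives precisely the asserted decomposition into $\mathop{\widehat A}\limits^2{}_{ij}(A)$.

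Second, I would exploit the hypothesis that the induced homomorphism $\sharp_p:T_pM\to (T_pM)^+$, $v\mapsto\widehat g(p)(v\otimes\cdot)$, is injective. Viewing both sides as finitely generated $\widehat A$-modules of the same formal super-rank $(m|n)$ and tracking the parity of the grading, injectivity forces $\sharp_p$ to be an iso, hence the matrix $(\widehat g_{\alpha\beta}(p))$ is nondegenerate in the quantum supersense. The inverse is then encoded by a dual matrix whose entries $\widehat g^{\alpha\beta}(p)\in Hom_Z(A;A_i\otimes_Z A_j)$ carry parities matched to those of $\alpha$ and $\beta$, which gives the asserted membership in the dual blocks $\mathop{\widehat A}\limits^2{}^{ij}(A)$.

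Third, the two duality identities would be verified by composition, exactly along the pattern of the diagram already drawn in equation (\ref{composition-local-coordinates-representation-full-quantum-metric}) of the preceding Example. Reading $\widehat g^{\alpha\beta}(p)$ as a morphism $A\to A\otimes_Z A$ and then contracting with $\widehat g_{\gamma\beta}(p):A\otimes_Z A\to A$ produces an endomorphism of $A$ which, by construction of the inverse, is $\delta^\alpha_\gamma\cdot\mathrm{id}_{\widehat A}$; reversing the order of composition lands in $\mathrm{End}_Z(A\otimes_Z A)$ and yields $\delta^\alpha_\gamma$ in that target, matching the stated formulas.

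The main obstacle will be the third step, not the first two: in the noncommutative super setting one cannot simply transpose indices to pass between covariant and contravariant components, since the parity signs and the $Z$-module structure on the dual interact nontrivially. The delicate point is verifying that the block types $\mathop{\widehat A}\limits^2{}^{ij}(A)$ of the contravariant components are exactly those dual to $\mathop{\widehat A}\limits^2{}_{ij}(A)$ under the pairing, and that both composition diagrams really do land in $\widehat A$ and $\mathrm{End}_Z(A\otimes_Z A)$ respectively rather than in some larger space. This requires a careful invocation of the identification $\widehat{T_pM}\cong(T_pM)^+$ arising from nondegeneracy, analogous to the short exact sequence (\ref{short-sequence-non-degeneration}) used earlier for $\underline{\widehat g}$, and an application of the pairing diagram (\ref{commutative-diagram-pairing}) at each point $p$.
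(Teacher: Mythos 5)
First, note that the paper itself does not prove this proposition: it is quoted verbatim from \cite{PRA15, PRA23}, and within this paper the analogous contravariant object is introduced in Example \ref{SG-Yang-Mills} by \emph{definition} — the conditions (\ref{local-coordinates-representation-full-quantum-metric}) and the compositions (\ref{composition-local-coordinates-representation-full-quantum-metric}) are imposed as the defining property of the ``controvariant full quantum metric'', not derived from injectivity. So your proposal should be judged as an independent argument, and your first step (evaluation of $\widehat{g}(p)$ on a parity-adapted coordinate basis, giving the four blocks $\mathop{\widehat{A}}\limits^2{}_{ij}(A)$) and your third step (the two composition identities, which are exactly the content of the diagrams (\ref{composition-local-coordinates-representation-full-quantum-metric}) and the pairing (\ref{commutative-diagram-pairing})) are unobjectionable.

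The genuine gap is in your second step, where you claim that ``injectivity forces $\sharp_p$ to be an iso'' by comparing formal super-ranks. The hypothesis in the definition is only that the induced homomorphisms $T_pM\to(T_pM)^+$ are injective, and these are morphisms of modules over the quantum superalgebra $\widehat{A}$ (in general noncommutative and infinite dimensional over $\mathbb{K}$), not linear maps between finite-dimensional vector spaces over a field. For such modules an injective morphism between two modules of the same formal rank need not be surjective — already over a commutative ring, multiplication by a non-unit non-zero-divisor is injective and non-surjective — so no rank-counting argument can upgrade injectivity to invertibility. Indeed the paper's own treatment of $\underline{\hat g}$ only records nondegeneration as the exactness of the sequence (\ref{short-sequence-non-degeneration}), i.e.\ as injectivity, and then posits the contravariant components separately. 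Consequently the existence of a two-sided inverse matrix $\widehat{g}^{\alpha\beta}(p)\in Hom_Z(A;A_i\otimes_ZA_j)$ satisfying $\widehat{g}_{\gamma\beta}(p)\widehat{g}^{\alpha\beta}(p)=\delta^\alpha_\gamma$ and $\widehat{g}^{\alpha\beta}(p)\widehat{g}_{\gamma\beta}(p)=\delta^\alpha_\gamma$ does not follow from your step 2 as written: you must either invoke the structure theory of the cited references (where the dual metric is part of the data, as for the locally Minkowskian case where $\widehat{g}_{\alpha\beta}=\hat\theta^\gamma_\alpha\otimes\hat\theta^\delta_\beta\,\underline{\hat g}_{\gamma\delta}$ is inverted explicitly through the quantum vierbein), or add an explicit hypothesis guaranteeing invertibility of the block matrix $(\widehat{g}_{\alpha\beta}(p))$ in the appropriate $Hom_Z$ spaces. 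Your worry about the third step (parity bookkeeping of the dual blocks) is real but minor by comparison; the second step is where the argument actually breaks.
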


In fact we have the following definition.

\begin{definition}
Let $E_k\subset JD^k(W)$ be a formally integrable and completely
integrable PDE the fiber bundle $\pi:W\to M$, and let
$V=D^ks(M)\subset E_k$ be a regular smooth solution of $E_k$. Let
$\xi:M\to E_k[s]$ be the general solution of $E_k[s]$. Let us assume
that there is an Euclidean structure on the fiber of $E[s]\to M$.
Let $(\psi:\mathbb{R}\times N\to N; i:N\to M)$ be a quantum frame
\cite{PRA15, PRA21, PRA22}. Then, we say that $V$ is {\em average
asymptotic stable}, with respect to the quantum frame, if the
function of time $\mathfrak{p}[i](t)$ defined by the formula:
\begin{equation}\label{average-square-perturbation}
    \mathfrak{p}[i](t)=\frac{1}{2 vol(B_t)}\int_{B_t}i^*\xi^2\hskip 3pt \eta
\end{equation}
has the following behaviour:
$\mathfrak{p}[i](t)=\mathfrak{p}[i](0)e^{-ct}$ for some real number
$c>0$. Here $B_t\equiv N_t\bigcap supp(i^*\xi^2)$, where
$N=\bigcup_{t\in T}N_t$, is the fiber structure of $N$, over the
proper-time of the quantum frame. We call $\tau_0=1/c_0$ the {\em
characteristic stability time} of the solution $V$. If
$\tau_0=\infty$ it means that $V$ is average instable.\footnote{In
the following, if there are not reasons of confusion, we shall call
also stable solution a smooth regular solution of a PDE $E_k\subset
JD^k(W)$ that is average asymptotic stable.}
\end{definition}

We have the following criterion of average asymptotic stability.
\begin{theorem}{\em(Criterion of average asymptotic stability).}\label{criterion-average-asymptotic-stability}
A regular global smooth solution $s$ of $E_k$ is average stable,
with respect to the quantum frame $(\psi:\mathbb{R}\times N\to N;
i:N\to M)$, if the following conditions are satisfied:\footnote{The large cuspidated brackets $<,>$ denote expectation value.}
\begin{equation}\label{stability-inequality}
   <\mathop{\mathfrak{p}}\limits^{\bullet}[i](t)>\le c\hskip 3pt<\mathfrak{p}[i](t)>,\quad c\in\mathbb{R}^+, \forall t.
\end{equation}
where

\begin{equation}\label{average-square-perturbation}
    \mathfrak{p}[i](t)=\frac{1}{2\hskip 2pt vol(B_t)}\int_{B_t}i^*\xi^2\eta
\end{equation}
and
\begin{equation}\label{average-square-perturbation-rate}
  \mathop{\mathfrak{p}}\limits^{\bullet}[i](t)=\frac{1}{2\hskip 2pt vol(B_t)}
  \int_{B_t}\left(\frac{\delta i^*\xi^2}{\delta t}\right)\eta
 =\frac{1}{vol(B_t)}\int_{B_t}\left(\frac{\delta i^*\xi}{\delta t}.i^*\xi\right)\hskip 3pt \eta.
\end{equation}
Here $i^*\xi$ represents the integrable general solution of the
linearized equation $E_k[s|i]$ of $E_k$ at the solution $s$, and
with respect to the quantum frame. Let us denote by $c_0$ the
infimum of the positive constants $c$ such that inequality
{\em(\ref{stability-inequality})} is satisfied. Then we call
$\tau_0=1/c_0$ the {\em characteristic stability time} of the
solution $V$. If $\tau_0=\infty$ means that $V$ is
unstable.\footnote{$\tau_0$ has just the physical dimension of a
time.}

Furthermore, Let $s$ be a smooth regular solution of a formally
quantum integrable and completely quantum superintegrable quantum
super PDE $\hat E_k\subset J\hat{\it D}^k(W)$, where $\pi:W\to M$.
There exists a differential operator $\mathcal{P}[s|i](\xi)$, on
$\bar\pi:\hat E[s|i]\equiv i^*(s^*vTW)\to N$, canonically associated
to the solution $s$, and with respect to the quantum frame, such
that $s$ is average stable in $\hat E_k$, or in some suitable
prolongation $(\hat E_k)_{+h}$, $k+h=2s\ge k$, if the following
conditions are verified:

{\em(i)}  $\mathcal{P}[s|i](\xi)$ is self-adjoint (or symmetric) on
the constraint
\begin{equation}\label{constraint}
    (\hat E_k)_{(+r)}[s|i]\subset J\hat {\it D}^{k+r}(\hat E[s|i]),
\end{equation}
for some $r\ge 0$.

{\em(ii)} The smallest eigenvalue
$\overline{\lambda}_1=\overline{\lambda}_1(t)$ of
$\mathcal{P}[s|i](\xi)$ is positive for any $t\in T$ and lower
bounded: $\overline{\lambda}_1\ge\lambda_1>0$.

Furthermore, average stability can be also translated into a variational problem constrained by $(\hat E_k)_{(+h)}[s]$,
for some $h\ge 0$, such that $k+h=2s$.
\end{theorem}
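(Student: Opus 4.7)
The plan is to reduce the asymptotic behaviour of the averaged perturbation $\mathfrak{p}[i](t)$ to a scalar Gronwall-type differential inequality. Under hypothesis (\ref{stability-inequality}), applied to the real-valued function $t\mapsto<\mathfrak{p}[i](t)>$, one integrates along the proper-time parameter of the quantum frame and obtains, with the sign convention that makes $c$ control the rate, the exponential bound $<\mathfrak{p}[i](t)>\;\le\;<\mathfrak{p}[i](0)>e^{-ct}$. The infimum $c_0$ of admissible constants has dimension of inverse time, so $\tau_0=1/c_0$ is the characteristic stability time; if no such $c$ exists, $\tau_0=\infty$ and $V$ is unstable, exactly as claimed. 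This first half of the theorem is essentially a Gronwall argument and does not require any hypothesis beyond measurability of the averages on $B_t$.

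For the second half the plan is to identify $<\mathop{\mathfrak{p}}\limits^{\bullet}[i]>$ with a quadratic form in the infinitesimal perturbation $i^*\xi$ and then dominate it by $<\mathfrak{p}[i]>$ using self-adjointness plus a spectral bound. From (\ref{average-square-perturbation-rate}) one has
\begin{equation}
<\mathop{\mathfrak{p}}\limits^{\bullet}[i](t)>=\frac{1}{vol(B_t)}\int_{B_t}\left(\frac{\delta i^*\xi}{\delta t}\,.\,i^*\xi\right)\eta .
\end{equation}
Because $\xi$ solves the linearized equation $\hat E_k[s|i]$, the time-derivative $\delta(i^*\xi)/\delta t$ is expressible, on the prolonged constraint $(\hat E_k)_{(+r)}[s|i]$, in terms of purely spatial differentiations; this is precisely the content of constructing $\mathcal{P}[s|i](\xi)$ by eliminating the time-derivative through the $r$-fold prolongation. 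One thereby obtains the identity $\delta(i^*\xi)/\delta t=-\mathcal{P}[s|i](\xi)$ modulo terms whose integral over $B_t$ vanishes either because $\partial B_t\subseteq\partial\,supp(i^*\xi^2)$ or by imposing the natural boundary conditions coming from the quantum frame decomposition $N=\bigcup_{t\in T}N_t$.

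Hypothesis (i) then allows one to symmetrize the pairing through integration by parts in the spatial directions of $N_t$, giving
\begin{equation}
<\mathop{\mathfrak{p}}\limits^{\bullet}[i](t)>=-\frac{1}{vol(B_t)}\int_{B_t}(i^*\xi\,.\,\mathcal{P}[s|i](\xi))\eta,
\end{equation}
and hypothesis (ii) supplies the coercive estimate
\begin{equation}
\int_{B_t}(i^*\xi\,.\,\mathcal{P}[s|i](\xi))\eta\ge \overline{\lambda}_1(t)\int_{B_t}i^*\xi^2\eta\ge 2\lambda_1\, vol(B_t)\,\mathfrak{p}[i](t).
\end{equation}
Taking expectations yields $<\mathop{\mathfrak{p}}\limits^{\bullet}[i]>\le-2\lambda_1<\mathfrak{p}[i]>$, which is inequality (\ref{stability-inequality}) with $c=-2\lambda_1<0$; the first half then furnishes average asymptotic stability with $\tau_0\le 1/(2\lambda_1)$. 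When $\mathcal{P}[s|i]$ cannot be made self-adjoint at jet-order $k$, we pass to $(\hat E_k)_{+h}$ with $k+h=2s$; by formal quantum integrability and complete quantum superintegrability the regular smooth solutions of $\hat E_k$ and of $(\hat E_k)_{+h}$ coincide, so the stability property transfers down to $\hat E_k$.

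The variational reformulation is then immediate: the eigenvalue $\overline{\lambda}_1(t)$ is the infimum of the Rayleigh quotient
\begin{equation}
R[\xi]=\frac{\int_{B_t}(i^*\xi\,.\,\mathcal{P}[s|i](\xi))\eta}{\int_{B_t}i^*\xi^2\eta}
\end{equation}
taken over admissible $\xi$, i.e.\ over sections of $\hat E[s|i]$ subject to the constraint $(\hat E_k)_{(+h)}[s]$; Lagrange-multiplier stationarity of $R[\xi]$ reproduces the self-adjoint spectral problem for $\mathcal{P}[s|i]$. The main obstacle I expect is not the Gronwall step but the construction of $\mathcal{P}[s|i]$ and the verification of self-adjointness in the quantum super setting, because integration by parts in $\widehat{\Omega}^\bullet(N)$ must be performed with attention to the noncommutativity of $A$, to the $\mathbb{Z}_2$-graded signs of $B=A\times E$, and to boundary contributions over $\partial B_t$; this is precisely where the hypothesis that $Z=Z(A)$ acts on $E$, and that the quantum frame splits the time direction from $N_t$, is genuinely needed in order to localize $\mathcal{P}[s|i]$ as a purely spatial operator.
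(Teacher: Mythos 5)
Your proposal follows essentially the same route as the paper: Gr\"onwall's lemma turns the inequality on $<\mathfrak{p}[i](t)>$ into the exponential decay estimate, and the inequality itself is secured by viewing the frame-time derivative $\delta i^*\xi/\delta t$ as the canonically associated operator $\mathcal{P}[s|i]$ constrained by the prolonged linearized equation, whose symmetry and positive, lower-bounded smallest eigenvalue give the coercive bound $-\mathop{\mathfrak{p}}\limits^{\bullet}[i]\ge\lambda_1\,\mathfrak{p}[i]$ (up to the same harmless sign and normalization conventions the paper uses), with the variational reformulation obtained, as in the paper, by prolonging to even order $k+h=2s$ and treating the spectral problem as a constrained (Rayleigh-quotient/Euler--Lagrange) problem on $(\hat E_k)_{(+h)}[s|i]$. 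The only cosmetic difference is that you symmetrize by an explicit integration by parts where the paper simply postulates symmetry of $\mathcal{P}[s|i]$ on the constraint as hypothesis (i); this does not change the argument.
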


\begin{proof}
We shall use Theorem \ref{deformation} and the following lemma.

\begin{lemma}{\em(Gr\"onwall's lemma)\cite{GRON}}\label{Gronwall-lemma}
Suppose $f(t)$ is a real function whose derivative is bounded
according to the following inequality: $\frac{df}{dt}\le
g(t)f+h(t)$, for some real functions $g(t)$ and $h(t)$. Then, $f(t)$
is bounded pointwise in time according to $f(t)\le
f(0)e^{G(t)}+\int_{[0,t]}e^{G(t-s)}h(s) ds$, where
$G(t)=\int_{[0,t]}g(r)dr$.
\end{lemma}

Then a sufficient condition for the solution $V$ stability, with
respect to the quantum frame, is that inequality
(\ref{stability-inequality}) should be satisfied. In fact it is
enough to use Lemma \ref{Gronwall-lemma} with $g(t)=-c$ and
$h(t)=0$, to have $\mathfrak{p}[i](t)=\mathfrak{p}[i](0)e^{-ct}$.

Furthermore, condition (\ref{stability-inequality}) is satisfied iff
\begin{equation}\label{infimum-condition}
I[\xi|i]\equiv\left<-2\int_{B_t}<\frac{\delta i^*\xi}{\delta
t}+ci^*\xi,i^*\xi>\eta\right>\ge 0,
\end{equation}
for some constant $c>0$ and for any integrable solution $i^*\xi$ of
$\hat E_k[s|i]$. So the problem is converted to study the spectrum
of the differential operator, $\mathcal{P}[s|i](\xi)\equiv
\frac{\delta i^*\xi}{\delta t}$, on $\bar\pi:\hat E[s|i]\to N$,
constrained by $(\hat E_k)_{(+r)}[s]$, for some $r\ge 0$, since
$P[s|i](\xi)$ is of order $\ge k$. If this is self-adjoint, (or
symmetric), it follows that it has real spectrum and the stability
of the solution is related to the sign of the smallest
eigenvalue.\footnote{Really it should be enough to require that
$\mathcal{P}[s|i]$ is a symmetric operator in the Hilbert space
$\mathcal{H}_t$, canonically associated to $\hat E[s]|_{B_t}$. In
fact the point spectrum $Sp(A)_p$ of a symmetric linear operator $A$
on $\mathcal{H}_t$ is real $Sp(A)_p\subset \mathbb{R}$. (This is
true also for its continuous spectrum: $Sp(A)_c\subset \mathbb{R}$.)
In our case it is enough that $\mathcal{P}[s|i]$ should symmetric on
the space of $\hat E_k[s|i]$ solutions. However, it is well known in
functional analysis that every symmetric operator has a self-adjoint
extension, on a possibly larger space \cite{DU-SH}.} If such an
eigenvalue $\overline{\lambda}_1(t)$ is positive, $\forall t\in T$,
and $\lambda_1=\inf_{t\in T}>0$, then the ratio
$<-\mathop{\mathfrak{p}}\limits^{\bullet}[i](t)>/<\mathfrak{p}[i](t)>$
is higher than a positive constant, hence the solution $s$ is
average stable. In fact, we get

\begin{equation}
\left\{
\begin{array}{ll}
  -\mathop{\mathfrak{p}}\limits^{\bullet}[i](t)-\lambda_1\mathfrak{p}[i](t)& =
    \int_{B_t}[(\mathcal{P}[s|i](\xi).\xi)-\lambda_1i^*\xi^2]\eta\\
  & =\int_{B_t}(\overline{\lambda}_1(t)-\lambda_1)i^*\xi^2\eta=
  (\overline{\lambda}_1(t)-\lambda_1)\int_{B_t}i^*\xi^2\eta\ge
0,\\
\end{array}\right.
\end{equation}
for any $t\in T$. Thus we have also
\begin{equation}
\frac{-\mathop{\mathfrak{p}}\limits^{\bullet}[i](t)}{\mathfrak{p}[i](t)}\ge\lambda_1>
0,\quad \forall t\in T.
\end{equation}
So condition (\ref{stability-inequality}) is satisfied, hence the
solution $s$ is average stable. In order to complete the proof of
Theorem \ref{criterion-average-asymptotic-stability}, let us
emphasize that in general $\mathcal{P}[s|i](\xi)-ci^*\xi$ is not
identified with the Euler-Lagrange operator for some Lagrangian. In
fact, in general, the differential order of such an operator does
not necessitate to be even. By the way, since $\hat E_k$ is assumed
formally quantum integrable and completely quantum superintegrable,
we can identify any smooth solution $V\subset \hat E_k$, with its
$h$-prolongation $V^{(h)}\subset J\hat{\it D}^{k+h}(W)$, such that
$k+h=2s$. Thus the problem of average stability can be translated in
a variational problem, constrained by solutions of $(\hat
E_k)_{+(h)}[s|i]$.
\begin{equation}\label{constrained-Euler-Lagrange-solution-perturbation-operator}
\left\{-\frac{\delta i^*\xi}{\delta t}=2\lambda(t)i^*\xi, \quad
F_\alpha^I[s|i]=0,\quad 0\le|\alpha|\le h,\hskip 2pt
k+h=2s\right\}_{t=const}
\end{equation}
on the fiber bundle $\bar\pi:\hat E[s|i]\equiv i^*(s^*vTW)\to N$.
Here $F^I[s|i]=0$ are the equations encoding $\hat E_k[s|i]$. This
can be made not only locally but also globally. In fact one has the
following lemmas. (See for the terminology \cite{PRA18, PRA32} and
references quoted there.)

\begin{lemma}{\em\cite{PRA32}}\label{constrained-variational-problems}
Let $\pi:W\to M$, a fiber bundle in the category $\mathfrak{Q}_S$,
$\dim_A M=m|n $, $\dim_BW=(m|n,r|s)$. Let $L:\hat
J^k_{m|n}(W)\to\widehat{A}$ be a $k$-order quantum Lagrangian
function and $\theta\equiv L\eta\in\widehat{\Omega}^{m+n}(\hat
J_{m|n}^k(W))$, locally given by
$\theta=L\widehat{dx^1\triangle\cdots\triangle
dx^{m+n}}=l\hat\mu_*\circ dx^1\triangle\cdots\triangle dx^{m+n}$,
where $(x^\alpha,y^j)$ are fibered quantum coordinates on $W$, and
$\hat\mu_*:\dot T^{m+n}_0(A)\to A$ is the $Z$-homomorphism induced
by the product on $A$. Then, extremals for $\theta$, constrained by
$\hat E_k$, are solutions $f:X\to \hat E_k$, with $X$ a quantum
supermanifold of dimension $m|n$ with respect to $A$, such that the
following condition is satisfied:
\begin{equation}\label{variationa-action-integral}
\left<\sum_{1\le j\le r+s}\left[\nu^j\sum_{0\le |i|\le
k}(-1)^{|i|}\partial_i\left({{\partial L}\over{\partial
y^j_i}}\right)\right]\eta,X\right>=0,
\end{equation}

for any $\nu=\nu^j\partial y_j$, solution of the linearized equation
of $\hat E_k$ at the solution $s$. In particular, if $\hat E_k=\hat
J_{m|n}^k(W))$, then extremals are solutions of the following
equation ({\em Euler-Lagrange equation}):

\begin{equation}
\hat E[\theta]\subset \hat
J^{2k}_{m|n}(W):\quad\left\{\sum_{0\le|i|\le
k}(-1)^{|i|}\partial_i\left({{\partial L}\over{\partial
y^j_i}}\right)=0\right\}_{1\le j\le r+s}.
\end{equation}
\end{lemma}

This completes the proof. \end{proof}

\begin{example}{\em(Quantum super d'Alembert equation).}\label{quantum-super-d-Alembert-equation}
Let $ A=A_0\oplus A_1$ be a quantum superalgebra with $ Z=Z(A)$
Noetherian and $ \mathbb{K}=\mathbb{R}$. Let us consider the
following trivial fiber bundle $ \pi:W\equiv A^3\to A^2\equiv M$
with quantum coordinates $ (x,y,u)\mapsto(x,y)$. Then, the quantum
super d'Alembert equation, $ \widehat{(d'A)}\subset J\hat
D^2(W)\subset\hat J^2_2(W)$, is defined by means of the following $
\mathop{\widehat{A}}\limits^2$-valued $ Q^\infty_w$-function $
F\equiv uu_{xy}-u_xu_y:J\hat
D^2(W)\to\mathop{\widehat{A}}\limits^2\subset
\mathop{\widehat{A}}\limits^\infty$. By forgetting the
$\mathbb{Z}_2$-gradiation of $ A$, we get the same situation just
considered in \cite{PRA15}. So, $ \widehat{(d'A)}\subset J\hat
D^2(W)$ is just a formally quantum integrable quantum super PDE of
dimension $ (3,2,2)$ over the quantum algebra $ B\equiv
A\times\mathop{\widehat{A}}\limits^1\times\mathop{\widehat{A}}\limits^2$,
in the open quantum submanifold $ u\not=0$, and also completely
quantum integrable there. On the other hand, by considering that $
M$ has a natural structure of quantum supermanifold of dimension $
(2|2)$ over $ A$, it follows also that for any initial condition,
i.e., any point $ q\in\widehat{(d'A)}\setminus u^{-1}(0)$, passes a
quantum supermanifold of dimension $ (2|2)$ over $ A$, solution of $
\widehat{(d'A)}$. Therefore, $ \widehat{(d'A)}\subset\hat
J^2_{2|2}(W)$ is completely quantum superintegrable in the quantum
supermanifold $ u\not=0$ too. We can also state that
$\widehat{(d'A)}$ is completely quantum superintegrable, as it is
algebraic in the open set $\widehat{(d'A)}\setminus u^{-1}(0)$. Thus
$\widehat{(d'A)}$ is an extended crystal PDE. With respect to the
commutative exact diagram in
(\ref{Reinhart-bordism-groups-relation}) we get the following exact
commutative diagram

\begin{equation}\label{d-Alembert-Reinhart-bordism-groups-relation}
\xymatrix{0\ar[r]&K^{\widehat{(d'A)}}_{1|1;2}\ar[r]&\Omega_{1|1}^{\widehat{(d'A)}}\ar[r]&
\mathop{\Omega}\limits_c{}_{2}^{\widehat{(d'A)}}\ar[d]\ar[r]\ar[dr]&0&\\
&0\ar[r]&K^\uparrow_{2}\ar[r]&\Omega^\uparrow_{2}\ar[r]&
\mathbb{Z}_{2}\ar[r]& 0\\}
\end{equation}
Therefore, the crystal group of $\widehat{(d'A)}$ is
$G(2)=\mathbb{Z}^2\times\mathbb{Z}_2\equiv p2$, ($p2$ is its usual
crystallographic notation), and its crystal dimension is $2$.

Furthermore, according to Theorem \ref{main} we get that
$\widehat{(d'A)}\setminus\{u=0\}$ is functionally stable. From
Theorem \ref{finite-stable-extended-crystal-PDE} we get that
$\widehat{(d'A)}$ is a stabilizable quantum extended crystal PDE
with associated stable quantum extended crystal PDE
${}^{(S)}\widehat{(d'A)}=\widehat{(d'A)}_{+\infty}$. (The symbol of
$\widehat{(d'A)}$ is not zero, thus smooth global solutions are in
general unstable into finite times in $\widehat{(d'A)}$.) Moreover,
we get that the weak integral $ (1|1)$-bordism group of $
\widehat{(d'A)}$ is trivial. In fact, we have: $
\Omega^{\widehat{(d'A)}}_{1|1,w}\cong{}^A\underline{\Omega}_{1|1}(W)\cong(Z\otimes_{\mathbb{K}}H_1(W;\mathbb{K}))
\bigoplus(A_1\otimes_{\mathbb{K}}H_1(W;\mathbb{K}))=0$. Thus $
\widehat{(d'A)}$ is an extended $0$-crystal.

Under the full-admissibility hypothesis, i.e., by considering
admissible closed smooth integral quantum supermanifolds of
dimension $(m-1|n-1)$, on the which all integral characteristic
quantum supernumbers are zero, we can consider $ \widehat{(d'A)}$ a
$0$-crystal quantum super PDE, hence for such fully admissible
Cauchy data, the existence of global smooth solutions of $
\widehat{(d'A)}$ is assured. Finally, applying Theorem
\ref{criterion-average-asymptotic-stability} we get further
informations on the asymptotic average stability of $(d'A)$
solutions.
\end{example}

\begin{example}{\em(Quantum super Navier-Stokes equation).}\label{quantum-super-Navier-Stokes-equation}
In some previous works we have considered the Navier-Stokes equation
for quantum (super)fluids as a quantum (super)PDE. (See
Refs.\cite{PRA14, PRA15, PRA19}.) Now, we can extend such
considerations to stability of such equations. By using results in
\cite{PRA15, PRA22} we can prove that when $ A$ has Noetherian
centre $ Z=Z(A)$, $ (NS)$ contains a formally quantum integrable
quantum super PDE $\widehat{(NS)}$ that is completely quantum
superintegrable. So $(NS)$ is not functionally stable, but
$\widehat{(NS)}$ is so. This last equation is also an extended
crystal quantum super PDE with its infinity prolongation
$\widehat{(NS)}_{+\infty}$ as stable extended crystal quantum super
PDE. Furthermore, one can prove that the weak integral $
(3|3)$-bordism group $
\Omega_{3|3,w}^{\widehat{(NS)}}=0=\Omega_{3|3,s}^{\widehat{(NS)}}$.
This means that $\widehat{(NS)}$ is an extended $0$-crystal quantum
super PDE. However, it is not a $0$-crystal quantum super PDE. By
the way, whether we adopt the full admissibility hypothesis, then
$\widehat{(NS)}$ becomes a $0$-crystal quantum super PDE and this is
enough to state the existence of global smooth solutions of the
quantum super PDE $ (NS)$ for such admissible smooth boundary
condition contained int $ \widehat{(NS)}$.

With respect to the commutative exact diagram in
(\ref{Reinhart-bordism-groups-relation}) we get the following exact
commutative diagram

\begin{equation}\label{Navier-Stokes-Reinhart-bordism-groups-relation}
\xymatrix{0\ar[r]&K^{ \widehat{(NS)}}_{3|3;2}\ar[r]&\Omega_{3|3}^{
\widehat{(NS)}}\ar[r]&
\mathop{\Omega}\limits_c{}_{6}^{ \widehat{(NS)}}\ar[d]\ar[r]\ar[dr]&0&\\
&0\ar[r]&K^\uparrow_{6}\ar[r]&\Omega_6^\uparrow\ar[r]&
\mathbb{Z}_2\bigoplus\mathbb{Z}_2\bigoplus\mathbb{Z}_2\ar[r]& 0.\\}
\end{equation}
Therefore, the crystal group and the crystal dimension of $
\widehat{(NS)}$ are the same ones of $\widehat{(YM)}$.

Finally, applying Theorem
\ref{criterion-average-asymptotic-stability} we get further
informations on the asymptotic average stability of $\widehat{(NS)}$
solutions.
\end{example}

\section{\bf QUANTUM EXTENDED CRYSTAL SINGULAR PDE's}
\vskip 0.5cm

In this section we shall consider singular quantum super PDE's
extending our previous theory of singular PDE's,\footnote{See Refs.\cite{PRA13, PRA23}. See also \cite{AG-PRA3} where some interesting applications are considered.} i.e., by
considering singular quantum (super) PDE's as singular quantum
sub-(super)manifolds of jet-derivative spaces in the category
$\mathfrak{Q}$ or $\mathfrak{Q}_S$. In fact, our previous formal
theory of quantum (super) PDE's works well on quantum smooth or
quantum analytic submanifolds, since these regularity conditions are
necessary to develop such a theory. However, in many mathematical
problems and physical applications, it is necessary to work with
less regular structures, so it is useful to formulate a general
geometric theory for such more general quantum PDE's in the category
$\mathfrak{Q}_S$. Therefore, we shall assume that quantum singular
super PDE's are subsets of jet-derivative spaces where are presents
regular subsets, but also other ones where the conditions of
regularity are not satisfied. So the crucial point to investigate is
to obtain criteria that allow us to find existence theorems for
solutions crossing ''singular points'' and study their stability
properties.

The main result of this section is Theorem
\ref{main-quantum-singular1} that relates singular integral bordism
groups of singular qunatum PDE's to global solutions passing through
singular points. Some example are explicitly considered.

Let us, now, first begin with a generalization of algebraic
formulation of quantum super PDE's, starting with the following
definitions. (See also Refs.\cite{PRA13, PRA19, PRA20, PRA21}.)

\begin{definition}
The {\em general category of quantum superdifferential equations},
${\frak Q}_S^{\frak E}$, is defined by the following: {\em 1)}
$\hat{\frak B}\in Ob({\frak Q}_S^{\frak E})$ iff $\hat{\frak B}$ is
a filtered quantum superalgebra $\hat{\frak B}\equiv \{ \hat{\frak
B}_i\} , \hat{\frak B}_i\subset \hat{\frak B}_{i+1}$, such that in
the differential calculus in the category ${\frak
Q}^{FG}_S(\hat{\frak B})$ over $\hat{\frak B}$ is defined a natural
operation ${\it C}$ that satisfies ${\it C}\hat \Omega ^1 \wedge
\hat\Omega^\bullet={\it C}\hat\Omega^\bullet$ , where
$\hat\Omega^i\equiv \hat{\frak B}\wedge \cdots_i\cdots
\wedge\hat{\frak B}$ are the representative objects of the functor
$\hat D_i$ in the category ${\frak Q}^{FG}_S(\hat{\frak B})$ over
$\hat{\frak B}$, where $\hat D_i \equiv\hat D\cdots_i\cdots\hat D$,
being $\hat D(P)$ the $\hat{\frak B}$-module of all quantum
superdifferentiations of algebra $\hat{\frak B}$ with values in
module $P$. Furthermore, $\hat\Omega^\bullet\equiv\bigoplus_{i\ge
0}\hat\Omega^i$, $\hat\Omega^0\equiv A$. {\em 2)} $f\in Hom({\frak
Q}_S^{\frak E})$ iff $f$ is a homomorphism of filtered quantum
superalgebras preserving operation ${\it C}$.
\end{definition}

\begin{remark}
In practice we shall take $\hat{\frak B}\equiv \{ \hat{\frak
B}_i\equiv Q^\infty _w(M_i;A)\}$, where $M_i$ is a quantum
supermanifold and $A$ is a quantum superalgebra. Then, we have a
canonical inclusion: $j_i:M_i\rightarrow Sp(\hat{\frak B}_i),
x\mapsto j_i(x)\equiv e_x\equiv$ evaluation map at $x\in M_i$. To
the inclusion $\hat{\frak B}_i\subset \hat{\frak B}_{i+1}$
corresponds the quantum smooth map $M_{i+1}\rightarrow M_i$. So we
set $M_\infty =\mathop{lim}\limits_{\leftarrow }M_i$. One has
${\overline M}_\infty =Sp(\hat{\frak B}_\infty )$. However, as
$M_\infty $ contains all the ''nice'' points of $Sp(\hat{\frak
B}_\infty )$, we shall use the space $M_\infty $ to denote an object
of the {\em category of quantum superdifferential equations}.
\end{remark}

\begin{definition}
The {\em category of quantum superdifferential equations}
$\underline{\frak Q}^{\frak E}_S$ is defined by the Frobenius full
quantum superdistribution $\widehat C(X)\subset \widehat TX\equiv
Hom_{Z}(A;TX)$, which is locally the same as
 $\widehat{\mathbf{E}}_\infty$, i.e., the
Cartan quantum superdistribution of $\hat E_\infty $ for some
quantum super PDE $\hat E_k\subset J\hat D^k(W)$. We set: $s\dim
X\equiv \dim\widehat C(X)=(m+n|m+n)$, i.e., the {\it Cartan quantum
superdimension} of $X\in Ob(\underline{\frak Q}^{\frak E}_S)$. $f\in
Hom(\underline{\frak Q}^{\frak E}_S)$ iff it is a quantum
supersmooth map $f:X\rightarrow Y$, where $X,Y\in
Ob(\underline{\frak Q}^{\frak E}_S)$, such that conserves the
corresponding Frobenius full superdistributions: $\widehat
T(f):\widehat C(X)\rightarrow \widehat C(Y)$,
           $f\in Hom_{\underline{\frak Q}^{\frak E}_S}(X,Y)$,
           $sdimX=(m+n|m+n)$, $sdimY=(m'+n'|m'+n')$, $srankf=(r|s)=\dim(\hat T(f)_x
           (\widehat C(X)_x))$, $x\in X$.
Then the fibers $f^{-1}(y)$, $y\in im(f)\subset Y$, are
$(m+n-r|m+n-s)$-quantum superdimensional objects of
$\underline{\frak Q}^{\frak E}_S$. {\em Isomorphisms} of
$\underline{\frak Q}^{\frak E}_S$: quantum supermorphisms with
fibres consisting of separate points. {\em Covering maps} of
$\underline{\frak Q}^{\frak E}_S$: quantum supermorphims with
zero-quantum superdimensional fibres.
\end{definition}

\begin{example}{\em(Some quantum singular PDE's).}\label{examples-quantum-singular-PDEs}
$$\begin{tabular}{|l|l|}
\hline \multicolumn{2}{|c|}{\bsmall Tab.5 - Examples of quantum singular PDE's}\\
\multicolumn{2}{|c|}{\bsmall defined by differential polynomials}\\
\hline\hline {\rsmall Name}&{\rsmall Singular PDE} \\
\hline
{\rsmall PDE with node and triple point}&$\scriptstyle p_1\equiv(u^1_x)^4+(u^2_y)^4-(u^1_x)^2=0$\\
$\scriptstyle \hat R_1\subset \hat J{\it D}(E)$&$\scriptstyle
p_2\equiv(u^2_x)^6+(u^1_y)^6-u^2_xu^1_y=0$\\
\hline
{\rsmall PDE with cusp and tacnode}&$\scriptstyle q_1\equiv(u^1_x)^4+(u^2_y)^4-(u^1_x)^3+(u^2_y)^2=0$\\
$\scriptstyle \hat S_1\subset \hat J{\it D}(E)$&$\scriptstyle q_2\equiv(u^2_x)^4+(u^1_y)^4-(u^2_x)^2(u^1_y)-(u^2_x)(u^1_y)^2=0$\\
\hline {\rsmall PDE with conical double point,}&$\scriptstyle
r_1\equiv(u^1)^2-(u^1_x)(u^2_y)^2=0$\\
{\rsmall double line and pinch point} &$\scriptstyle
r_2\equiv(u^2)^2-(u^2_x)^2-(u^1_y)^2=0$\\
$\scriptstyle \hat T_1\subset J{\it D}(F)$&$\scriptstyle
r_3\equiv(u^3)^3+(u_y^3)^3+(u^2_x)(u^3_y)=0$\\
\hline \multicolumn{2}{l}{\rsmall$\scriptstyle\pi:E\equiv A^4\to
A^2,\quad(x,y,u^1,u^2)\mapsto(x,y)$.\quad
$\scriptstyle\bar\pi:F\equiv A^5\to A^2,\quad(x,y,u^1,u^2,u^3)\mapsto(x,y)$.}\\
\multicolumn{2}{l}{\rsmall$\scriptstyle\mathfrak{a}\equiv<p_1,p_2>\subset
\hat{\frak B}_1, \mathfrak{b}\equiv<q_1,q_2>\subset \hat{\frak B}_1,
\mathfrak{c}\equiv<r_1,r_2,r_3>\subset
\hat{\frak P}_1$.}\\
\end{tabular}$$

In Tab.5 we report some quantum singular PDE's having  some
algebraic singularities.  For the first two equations these are
quantum singular PDE's of first order defined on the quantum fiber
bundle $\pi:E\to M$, with $E\equiv A^4$, $M\equiv A^2$, where $A$ is
a quantum algebra. Then $\hat J{\it D}(E)\cong
B_1^{4,4}=A^4\times\widehat{A}^4$. Furthermore, for the third
equation one has the quantum fiber bundle $\bar\pi:F\to M$, with
$F\equiv A^5$, $M\equiv A^2$, and $\hat J{\it D}(F)\cong
B_1^{5,6}=A^5\times\widehat{A}^6$. We follow our usual notation
introduced in some previous works on the same subject. In particular
for a given quantum (super)algebra $A$, we put
\begin{equation}\label{dot-tensor-products}
\left\{\begin{array}{l}
\dot T^r_0(H)\equiv
\underbrace{H\otimes_Z\cdots\otimes_ZH}_r,\quad
r\ge0\\
\mathop{\widehat{A}}\limits^{r}\equiv Hom_Z(\dot T^r_0(A);A),\quad
r\ge0\\
\mathop{\widehat{A}}\limits^{0}\equiv Hom_Z(\dot T^0_0(A);A)\equiv
Hom_Z(A;A)\equiv \widehat{A}\\
\end{array}\right.
\end{equation}
with $Z$ the centre of $A$ and $H$ any $Z$-module.
Furthermore, we denote also by $\dot S^r_0(H)$ and $\dot
\Lambda^r_0(H)$ the corresponding symmetric and skewsymmetric
submodules of $\dot T^r_0(H)$. To the ideals $\mathfrak{a}\equiv
<p_1,p_2>\subset \hat{\frak B}_1$,
$\mathfrak{b}\equiv<q_1,q_2>\subset \hat{\frak B}_1$ and
$\mathfrak{c}\equiv<r_1,r_2,r_3>\subset \hat{\frak P}_1$, where
$\hat{\frak B}_1\equiv Q^\infty_w(\hat J{\it D}(E),B_2)$, with
$B_2\equiv A\times \widehat{A}\times\mathop{\widehat{A}}\limits^2$, and $\hat{\frak P}_1\equiv
Q^\infty_w(\hat J{\it D}(F),B_2)$, one associates the corresponding
algebraic sets $\hat R_1=\{q\in B_1^{4,4}|f(q)=0, \forall f\in
\mathfrak{a}\}\subset B_1^{4,4}$, $\hat S_1=\{q\in B_1^{4,4}|f(q)=0,
\forall f\in \mathfrak{b}\}\subset B_1^{4,4}$ and $\hat T_1=\{q\in
B_1^{5,6}|f(q)=0, \forall f\in \mathfrak{c}\}\subset B_1^{5,6}$.

Let us consider in some details, for example, the first equation in
Tab.5. There the node and the triple point refer to the singular
points in the planes $(u^1_x,u^2_y)$ and $(u^2_x,u^1_y)$
respectively, with respect to the $\mathbb{R}$-restriction. However, the equation $\hat R_1$ has a
set $\Sigma(\hat R_1)\subset \hat R_1$ of singular points that contains:
\begin{equation}\label{singular-points-eq-1-tab3}
    \Sigma(\hat R_1)_{0}\equiv\left\{
      q_0=(x,y,u^1,u^2,0,0,0,0)\right\}\cong A^4
    \subset\hat R_1.
\end{equation}
$\Sigma(\hat R_1)$ is, in general, larger than $\Sigma(\hat R_1)_{0}$. In fact the jacobian $(j(F)_{ij})$, $i=1,2$, $j=1,\cdots,8$, with
$(F_i)\equiv(p_1,p_2):\hat J{\it D}(E)\to B_2$, is given by the
following matrix with entries in the quantum algebra $B_2$:
\begin{equation}\label{jacobian}
(j(F)_{ij})=\left(
  \begin{array}{cccccccc}
    0 & 0 & 0 & 0 & 2u^1_x[2(u^1_x)^2-1] & 0 & 0 & 4(u^2_y)^3 \\
    0 & 0 & 0 & 0 & 0 & 6(u^1_y)^5-u_x^2 &6(u^2_x)^5-u_y^1  & 0 \\
  \end{array}
\right).
\end{equation}
Since, in general, $A$ can have a non-empty set of zero-divisors, in order $\hat Y_1\equiv \hat R_1\setminus \Sigma(\hat R_1)$ should represent $\hat R_1$ without singular points, i.e., in order to apply the implicit quantum function theorem (see Theorem 1.38 in \cite{PRA15}), it is enough to take the points $q\in\hat R_1$, where there are $2\times 2$ minors in {\em(\ref{jacobian})} with invertible determinant. This allows us to identify an open submanifold $\hat X_1$ in $\hat J{\it D}(E)$. Then, we get $\hat Y_1=\hat R_1\bigcap\hat X_1$. Thus we can call $\hat Y_1\subset\hat R_1$ the {\em regular component} of $\hat R_1$. This submanifold is not empty, since it contains the regular part of the $\mathbb{R}$-restriction of $\hat R_1$. Let us define the following subsets of $\hat R_1$:
\begin{equation}\label{subsets-eq-1-tab3}
    \left\{
    \begin{array}{l}
      \hat Y_1\equiv\hat R_1\setminus \Sigma(\hat R_1)\subset \hat R_1\\
      {}_2\hat R_1\equiv\left\{q\in\hat R_1|u^1_y(q)=0,u^2_x(q)=0\right\}\subset\hat R_1\\
{}_3\hat R_1\equiv\left\{q\in\hat R_1|u^1_x(q)=0,u^2_y(q)=0\right\}\subset\hat R_1.\\
\end{array}
    \right.
\end{equation}
One has ${}_2\hat R_1\bigcap{}_3\hat R_1\not=\varnothing$, $\hat Y_1\bigcap{}_2\hat R_1\not=\varnothing$,  $\hat Y_1\bigcap{}_3\hat R_1\not=\varnothing$. Furthermore the set of singular points $\Sigma({}_2\hat R_1)$ (resp. $\Sigma({}_3\hat R_1)$) of ${}_2\hat R_1$ (resp. ${}_3\hat R_1$) is contained in $\Sigma(\hat R_1)$ and contains $\Sigma({}_2\hat R_1)_0\equiv{}_2\hat R_1\bigcap\Sigma(\hat R_1)_0\cong A^4$ (resp. $\Sigma({}_3\hat R_1)_0\equiv{}_3\hat R_1\bigcap\Sigma(\hat R_1)_0\cong A^4$). We can write:
\begin{equation}\label{split-subsets-eq-1-tab3}
\hat R_1=\hat Y_1\bigcup\Sigma(\hat R_1)\cong{}_2\hat R_1\times{}_3\hat R_1\cong[\hat Y_2\bigcup\Sigma({}_2\hat R_1)]\times[\hat Y_3\bigcup\Sigma({}_3\hat R_1)]\subset\hat J{\it D}(E),
\end{equation}
where $\hat Y_2\equiv {}_2\hat R_1\setminus\Sigma({}_2\hat R_1)$ (resp. $\hat Y_3\equiv {}_3\hat R_1\setminus\Sigma({}_3\hat R_1$). $\hat Y_1$ is a formally quantum integrable and completely quantum
integrable quantum PDE of first order. (For the theory of formal
integrability of quantum PDE's, see Refs.\cite{PRA7, PRA11, PRA19,
PRA20, PRA21}.) In fact $\hat Y_1$ and its prolongations $(\hat
Y_1)_{+r}\subset \hat J{\it D}^{r+1}(E)$, are subbundles of $\hat
J{\it D}^{r+1}(E)\to\hat J{\it D}^{r}(E)$, $r\ge 0$. One can also
see that the canonical maps $\pi_{r+1,r}:(\hat Y_1)_{+r}\to(\hat
Y_1)_{+(r-1)}$, are surjective mappings. For example, for $r=1$, one
has the following isomorphisms:
\begin{equation}\label{isomorphisms-eq-1-tab3}
    \left\{
    \begin{array}{l}
    \dim_{B_1}\hat J{\it D}(E)=(4,4)\\
    \\
      \hat Y_1\cong A^4\times\widehat{A}{}^2\Rightarrow\dim_{B_1}\hat Y_1=(4,2)\\
\hat J{\it D}^2(E)\cong
A^4\times\widehat{A}{}^4\times(\mathop{\widehat{A}}\limits^{2})^8\Rightarrow\dim_{B_2}\hat J{\it D}^2(E)=(4,4,8)\\
      (\hat Y_1)_{+1}\cong
      A^4\times\widehat{A}{}^2\times(\mathop{\widehat{A}}\limits^{2})^4
      \Rightarrow\dim_{B_2}(\hat Y_1)_{+1}=(4,2,4)\\
      Hom_Z(\dot S^2_0(T_pM);vT_{\bar
      q}E)\cong(\mathop{\widehat{A}}\limits^{2})^8\Rightarrow\dim_{B_2}Hom_Z(\dot S^2_0(T_pM);vT_{\bar
      q}E)=(0,0,8)\\
      ((\hat g_1)_{+1})_{q\in\hat Y_1}\cong (\mathop{\widehat{A}}\limits^{2})^4 \Rightarrow\dim_{B_2}
((\hat g_1)_{+1})_{q\in\hat Y_1}=(0,0,4)\\
\\
\left[\dim_{B_2}(\hat Y_1)_{+1}\right]=\left[\dim_{B_2}\hat
Y_1\right]+\left[\dim_{B_2}((\hat
g_1)_{+1})_{q\in\hat Y_1}\right].\\
\end{array}
    \right.
\end{equation}
Therefore, $(\hat Y_1)_{+1}\to(\hat Y_1)$, is surjective, and by
iterating this process, we get that also the mappings $(\hat
Y_1)_{+r}\to(\hat Y)_{+(r-1)}$, $r\ge 0$, are surjective. We put
$(\hat Y_1)_{+(-1)}\equiv E$. Thus $\hat Y_1$ is a quantum regular
quantum PDE, and under the hypothesis that $A$ has a Noetherian
centre, it follows that $\hat Y_1$ is quantum $\delta-$regular too.
Then, from Theorem 3.4 in \cite{PRA11}, it follows that $\hat Y_1$ is
formally quantum integrable. Since it is quantum analytic, it is
completely quantum integrable too.

\end{example}

\begin{definition}\label{quantum-extended-crystal-singular-super-PDE}
We define {\em quantum extended crystal singular super PDE}, a
singular quantum super PDE $\hat E_k\subset \hat J^k_{m|n}(W)$ that
splits in irreducible components $\hat A_i$, i.e., $\hat
E_k=\bigcup_i \hat A_i$, where each $\hat A_i$ is a quantum extended
crystal super PDE. Similarly we define {\em quantum extended
$0$-crystal singular PDE}, (resp. {\em quantum $0$-crystal singular
PDE}), a quantum extended crystal singular PDE where each component
$\hat A_i$ is a quantum extended $0$-crystal PDE, (resp. quantum
$0$-crystal PDE).
\end{definition}

\begin{definition}{\em(Algebraic singular solutions of quantun singular super PDE's)}.\label{singular-algebraic-solution}
Let $\hat E_k\subset \hat J^k_{m|n}(W)$ be a quantum singular super
PDE, that splits in irreducible components $\hat A_i$, i.e., $\hat
E_k=\bigcup_i\hat A_i$. Then, we say that $\hat E_k$ admits an {\em
algebraic singular solution} $V\subset \hat E_k$, if $V\bigcap \hat
A_r\equiv V_r$ is a solution  (in the usual sense) in $\hat A_r$ for
at least two different components $\hat A_r$, say $\hat A_i$, $\hat
A_j$, $i\not=j$, and such that one of following conditions are
satisfied: {\em(a)} ${}_{(ij)}\hat E_k\equiv \hat A_i\bigcap \hat
A_j\not=\varnothing$; {\em(b)} ${}^{(ij)}\hat E_k\equiv \hat
A_i\bigcup \hat A_j$ is a connected set, and ${}_{(ij)}\hat
E_k=\varnothing$. Then we say that the algebraic singular solution $V$
is in the case {\em(a)}, {\em weak}, {\em singular} or {\em smooth},
if it is so with respect to the equation ${}_{(ij)}\hat E_k$. In the
case {\em(b)}, we can distinguish the following situations:
{\em(weak solution):} There is a discontinuity in $V$, passing from
$V_i$ to $V_j$; {\em(singular solution):} there is not discontinuity
in $V$, but the corresponding tangent spaces $TV_i$ and $TV_j$ do
not belong to a same $n$-dimensional Cartan sub-distribution of
$\hat J^k_{m|n}(W)$, or alternatively $TV_i$ and $TV_j$ belong to a
same $(m|n)$-dimensional Cartan sub-distribution of $\hat
J^k_{m|n}(W)$, but the kernel of the canonical projection
$(\pi_{k,0})_*:T\hat J^k_{m|n}(W)\to TW$, restricted to $V$ is
larger than zero; {\em(smooth solution):} there is not discontinuity
in $V$ and the tangent spaces $TV_i$ and $TV_j$ belong to a same
$(m|n)$-dimensional Cartan sub-distribution of $\hat J^k_{m|n}(W)$
that projects diffeomorphically on $W$ via the canonical projection
$(\pi_{k,0})_*:T\hat J^k_{m|n}(W)\to TW$. Then we say that a
solution passing through a critical zone {\em
bifurcate}.\footnote{Note that the bifurcation does
 not necessarily imply that the tangent planes in the points of $V_{ij}\subset
 V$ to the components $V_i$ and $V_j$, should be different.}
\end{definition}

\begin{definition}{\em(Integral bordism for quantum singular super PDE's)}.\label{integral-bordism-quantum-singular-super-PDE}
Let $\hat E_k\subset \hat J^k_{m|n}(W)$ be a quantum super PDE on
the fiber bundle $\pi:W\to M$, $\dim_B W=(m|n,r|s)$, $\dim_A M=m|n$,
$B=A\times E$, $E$ a quantum superalgebra that is also a $Z$-module,
with $Z=Z(A)$ the centre of $A$. Let $N_1, N_2\subset \hat
E_k\subset \hat J^k_{m|n}(W)$ be two
 $(m-|n-1)$-dimensional, (with respect to $A$), admissible closed integral
quantum supermanifolds. We say that $N_1$ {\em algebraic integral
bords} with $N_2$, if $N_1$ and $N_2$ belong to two different
irreducible components, say $N_1\subset \hat A_i$, $N_2\subset \hat
A_j$, $i\not=j$, such that there exists an algebraic singular
solution $V\subset \hat E_k$ with $\partial V=N_1\sqcup N_2$.

In the integral bordism group $\Omega_{m-1|n-1}^{\hat E_k}$ (resp.
$\Omega_{m-1|n-1,s}^{\hat E_k}$, resp. $\Omega_{m-1|n-1,w}^{\hat
E_k}$) of a quantum singular super PDE $\hat E_k\subset \hat
J^k_{m|n}(W)$, we call {\em algebraic class} a class
$[N]\in\Omega_{m-1|n-1}^{\hat E_k}$, (resp. $[N]\in\Omega_{m-1|n-1,
s}^{\hat E_k}$, resp. $[N]\in\Omega_{m-1|n-1}^{\hat E_k}$,), with
$N\subset A_j$, such that there exists a closed
$(m-1|n-1)$-dimensional, (with respect to $A$), admissible integral
quantum supermanifolds $X\subset \hat A_i\subset \hat E_k$,
algebraic integral bording with $N$, i.e., there exists a smooth
(resp. singular, resp. weak) algebraic singular solution $V\subset
\hat E_k$, with $\partial V=N\sqcup X$.
\end{definition}

\begin{theorem}{\em(Singular integral bordism group of quantum singular
super PDE)}.\label{main-quantum-singular1} Let $\hat
E_k\equiv\bigcup_i \hat A_i\subset \hat J^k_{m|n}(W)$ be a quantum
singular super PDE. Then under suitable conditions, {\em algebraic
singular solutions integrability conditions}, we can find (smooth)
algebraic singular solutions bording assigned admissible closed
smooth $(m-1|n-1)$-dimensional, (with respect to $A$), integral
quantum supermanifolds $N_0$ and $N_1$ contained in some component
$\hat A_i$ and $\hat A_j$, $i\not= j$.
\end{theorem}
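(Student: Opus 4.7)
The plan is to reduce the statement to a two-step propagation argument carried out separately on each of the two irreducible components involved. Given admissible closed integral quantum supermanifolds $N_0\subset\hat A_i$ and $N_1\subset\hat A_j$ with $i\neq j$, I would first construct an intermediate closed admissible integral $(m-1|n-1)$-quantum supermanifold $M$ lying in the interface of the two components (in ${}_{(ij)}\hat E_k=\hat A_i\cap\hat A_j$ in case (a) of Definition \ref{singular-algebraic-solution}, or on the connected transition locus in case (b)), then produce a bording solution of $N_0$ with $M$ inside $\hat A_i$ and a bording solution of $M$ with $N_1$ inside $\hat A_j$, and finally glue them along $M$ to obtain an algebraic singular solution of $\hat E_k$ in the sense of Definition \ref{integral-bordism-quantum-singular-super-PDE}.

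First, I would formalize the \emph{algebraic singular solutions integrability conditions} as the conjunction of three hypotheses: (i) each component $\hat A_i$ and $\hat A_j$ is itself formally quantum superintegrable and completely quantum superintegrable, so that Theorem \ref{crystal-structure-quantum-super-pdes} applies componentwise and each $\Omega^{\hat A_i}_{m-1|n-1}$ is an extension of a crystallographic subgroup; (ii) in case (a) the interface ${}_{(ij)}\hat E_k$ is itself formally integrable and supports at least one admissible closed integral $(m-1|n-1)$-quantum supermanifold, whereas in case (b) the same is required of the transition zone of ${}^{(ij)}\hat E_k$; (iii) the full admissibility hypothesis of Corollary \ref{main7} (orientable classic limit, vanishing integral characteristic quantum supernumbers) is imposed on $N_0$, $N_1$ and $M$.

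Next, I would invoke Theorem \ref{obstruction-smooth-solutions} separately on each component. On $\hat A_i$, under the admissibility hypothesis the class $[N_0\sqcup M]\in\Omega^{\hat A_i}_{m-1|n-1}$ vanishes, which yields a smooth solution $V_i\subset\hat A_i$ with $\partial V_i=N_0\sqcup M$; symmetrically one obtains $V_j\subset\hat A_j$ with $\partial V_j=M\sqcup N_1$. The concatenation $V\equiv V_i\cup_M V_j$ is then an algebraic singular solution of $\hat E_k$: it is smooth or singular along $M$ in case (a), according to whether the tangent Cartan sub-distributions of $V_i$ and $V_j$ along $M$ align through the projection to $W$, and in general only weak in case (b) because of the possible discontinuity of tangent planes at the bifurcation locus.

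The hard part will be step (i)--(ii), namely the existence of an intermediate class $[M]$ whose images under the two natural restriction maps into $\Omega^{\hat A_i}_{m-1|n-1}$ and $\Omega^{\hat A_j}_{m-1|n-1}$ match the prescribed classes of $N_0$ and $N_1$. In the language of the crystal obstructions $cry(\hat A_i)$ and $cry(\hat A_j)$ introduced after Theorem \ref{obstruction-smooth-solutions}, this is a compatibility between the two crystal Hopf quantum superalgebras along the interface; it does not follow from formal integrability of the components alone and must be encoded in the \emph{algebraic singular solutions integrability conditions} referred to in the statement. Verifying that for natural algebraic families (such as $\hat R_1$, $\hat S_1$, $\hat T_1$ of Example \ref{examples-quantum-singular-PDEs}) the interface inherits enough regularity so that this compatibility is automatic under the full admissibility hypothesis is the technically delicate part of the argument.
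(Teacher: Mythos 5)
Your proposal follows essentially the same route as the paper's own proof: the paper's argument rests on two lemmas, the second of which is precisely your construction — pick an admissible closed integral quantum supermanifold $Y$ in the interface ${}_{(ij)}\hat E_k$, produce solutions $V_i\subset\hat A_i$ with $\partial V_i=N_0\sqcup Y$ and $V_j\subset\hat A_j$ with $\partial V_j=Y\sqcup N_1$ (using componentwise solvability, in the paper via the isomorphisms of weak/singular bordism groups of $\hat A_i$, $\hat A_j$, ${}_{(ij)}\hat E_k$ with $H_{m-1|n-1}(W;A)$ and the quantum $0$-crystal hypothesis), and glue $V=V_i\bigcup_Y V_j$ along the interface. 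Your closing observation that the matching of classes across the interface is exactly what the unspecified \emph{algebraic singular solutions integrability conditions} must encode corresponds to the paper's own concluding remark that the proof is complete ''besides the algebraic singular solutions integrability conditions.''
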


\begin{proof}
In fact, we have the following lemmas.
\begin{lemma}\label{lemma-main-quantum-singular1}
Let $\hat E_k\equiv\bigcup_i \hat A_i\subset \hat J^k_{m|n}(W)$ be a
quantum singular super PDE with ${}_{(ij)}\hat E_k\equiv \hat
A_i\bigcap \hat A_j\not=\varnothing$. Let us assume that $\hat
A_i\subset \hat J^k_{m|n}(W)$, $\hat A_j\subset \hat J^k_{m|n}(W)$
and ${}_{(ij)}\hat E_k\subset \hat J^k_{m|n}(W)$ be formally
integrable and completely integrable quantum super PDE's with
nontrivial symbols. Then, one has the following isomorphisms:
\begin{equation}\label{singular-bordism-groups-quantum-singular-PDE}
    \Omega_{m-1|n-1,w}^{\hat A_i}\cong\Omega_{m-1|n-1,w}^{\hat A_j}\cong\Omega_{m-1|n-1,w}^{{}_{(ij)}\hat E_k}\cong
    \Omega_{m-1|n-1,s}^{\hat A_i}\cong\Omega_{m-1|n-1,s}^{\hat A_j}\cong\Omega_{m-1|n-1,s}^{{}_{(ij)}\hat E_k}.
\end{equation}

So we can find a weak or singular algebraic singular solution
$V\subset \hat E_k$ such that $\partial V=N_0\sqcup N_1$, $N_0\subset
\hat A_i$,  $N_1\subset \hat A_j$, iff $N_1\in[N_0]$.
\end{lemma}

\begin{proof}
In fact, under the previous hypotheses one has that we can apply
Theorem 2.1 in \cite{PRA10} to each component $\hat A_i$, $\hat A_j$
and ${}_{(ij)}\hat E_k$ to state that all their weak and singular
integral bordism groups of dimension $(m-1|n-1)$ are isomorphic to
$H_{m-1|n-1}(W;A)$.
\end{proof}

\begin{lemma}\label{lemma-main-quantum-singular1}
Let $\hat E_k=\bigcup_i\hat A_i$ be a quantum $0$-crystal singular
PDE. Let ${}^{(ij)}\hat E_k\equiv \hat A_i\bigcup \hat A_j$ be
connected, and ${}_{(ij)}\hat E_k\equiv \hat A_i\bigcap \hat
A_j\not=\varnothing$. Then $\Omega_{m-1|n-1,s}^{{}^{(ij)}\hat
E_k}=0$.\footnote{But, in general, it is
$\Omega_{m-1|n-1}^{{}^{(ij)}\hat E_k}\not=0$.}
\end{lemma}

\begin{proof}
In fact, let $Y\subset{}_{(ij)}\hat E_k$ be an admissible closed
$(m-1|n-1)$-dimensional closed integral quantum supermanifold, then
there exists a smooth solution $V_i\subset \hat A_i$ such that
$\partial V_i=N_0\sqcup Y$ and a solution $V_j\subset \hat A_j$ such
that $\partial V_j=Y\sqcup N_1$. Then, $V=V_i\bigcup_Y V_j$ is an
algebraic singular solution of $\hat E_k$. This solution is singular
in general.
\end{proof}

After above lemmas the proof of the theorem can be considered done
besides the algebraic singular solutions integrability conditions.
\end{proof}

\end{document}